\date{\today}
\newtheorem{theorem}{Theorem}[section]
\newtheorem{lemma}[theorem]{Lemma}
\newtheorem{corollary}[theorem]{Corollary}
\newtheorem{proposition}[theorem]{Proposition}
\theoremstyle{definition}
\newtheorem{definition}[theorem]{Definition}
\newtheorem{example}[theorem]{Example}
\theoremstyle{remark}
\newtheorem{remark}[theorem]{Remark}
\newcommand{\ot}{\otimes}
\newcommand{\co}{\circ}
\begin{document}

\begin{center}

{\huge{\bf Fundamental theorems of Doi-Hopf modules in a nonassociative setting}}

\end{center}

\ \\
\begin{center}
{\bf J.N. ALONSO \'ALVAREZ$^{1}$, J.M. FERN\'ANDEZ VILABOA$^{2}$, R.
GONZ\'{A}LEZ RODR\'{I}GUEZ$^{3}$}
\end{center}

\ \\
\hspace{-0,5cm}$^{1}$ Departamento de Matem\'{a}ticas, Universidad
de Vigo, Campus Universitario Lagoas-Marcosende, E-36280 Vigo, Spain
(e-mail: jnalonso@uvigo.es)
\ \\
\hspace{-0,5cm}$^{2}$ Departamento de \'Alxebra, Universidad de
Santiago de Compostela.  E-15771 Santiago de Compostela, Spain
(e-mail: josemanuel.fernandez@usc.es)
\ \\
\hspace{-0,5cm}$^{3}$ Departamento de Matem\'{a}tica Aplicada II,
Universidad de Vigo, Campus Universitario Lagoas-Mar\-co\-sen\-de, E-36310
Vigo, Spain (e-mail: rgon@dma.uvigo.es)
\ \\

{\bf Abstract} In this paper we introduce the notion of weak non-asssociative  Doi-Hopf module and give the Fundamental Theorem of Hopf modules in this setting. Also we prove that there exists  a categorical equivalence that admits as  particular instances the ones constructed in the literature  for Hopf algebras, weak Hopf algebras, Hopf quasigroups, and weak Hopf quasigroups.

\vspace{0.5cm}

{\bf Keywords.} Hopf algebra, Weak Hopf
algebra, Hopf quasigroup, Weak Hopf
 quasigroup, Doi-Hopf module, Fundamental Theorem.

{\bf MSC 2010:} 18D10, 16T05, 17A30, 20N05.

\section{introduction}

Let ${\Bbb F}$ be a field and  ${\mathcal C}={\Bbb F}-Vect$. Let $H$ be a Hopf algebra in ${\mathcal C}$ and  let $B$ be a right $H$-comodule algebra with coaction $\rho_{B}:B\rightarrow B\ot H$,  $\rho_{B}(b)=b_{(0)}\ot b_{(1)}$,  where the unadorned tensor product  is the tensor product over ${\Bbb F}$ and for $\rho_{B}(b)$ we used the Sweedler notation. In \cite{Doi83}, Doi introduced the notion of $(H,B)$-Hopf module, as a generalization of the classical notion of Hopf module, defined by Larson and Sweedler in \cite{Lar-Sweed}, in the following way: Let $M$ be a right $B$-module and a right $H$-comodule. If, for all $m\in M$ and $b\in B$, we write $m.b$ for the action and $\rho_{M}(m)=m_{[0]}\ot m_{[1]}$ for the coaction, we will say that $M$ is an $(H,B)$-Hopf module if the equality  
$$\rho_{M}(m.b)=m_{[0]}.b_{(1)}\ot m_{[1]}b_{(2)}$$
holds, where  $m_{[1]}b_{(2)}$ is the product in $H$ of $m_{[1]}$ and $b_{(2)}$. A morphism between two $(H,B)$-Hopf modules is an ${\Bbb F}$-linear map that is $B$-linear and $H$-colinear. Hopf modules and morphisms of Hopf modules constitute the category of $(H,B)$-Hopf modules  denoted by ${\mathcal M}^{H}_{B}$. If there exists a right $H$-comodule map $h:H\rightarrow B$ which is an algebra map, and $M^{co H}=\{m\in M \;| \; \rho_{M}(m)=m\ot 1_{H}\}$, $B^{co H}=\{b\in B \;| \; \rho_{B}(b)=b\ot 1_{H}\}$ are the subobjects of coinvariants, for any $m\in M^{co H}$ and $b\in B^{co H}$ we have that $m.b\in M^{co H}$ and then $M^{co H}$  is a right $B^{co H}$-module. Using this property  Doi proved in Theorem 3 of \cite{Doi83} that $M$ is isomorphic to  $M^{co H}\ot_{B^{co H}} B$ as $(H,B)$-Hopf modules. Moreover, if $N$ is a right $B^{co H}$-module, the tensor product $N\ot_{B^{co H}} B$, with the action and coaction induced by the product of $B$ and the coproduct of $H$, is an $(H,B)$-Hopf module. This construction is functorial and then  we have a functor, called the induction functor, $F=-\ot_{B^{co H}} B:{\mathcal C}_{B^{co H}}\rightarrow {\mathcal M}^{H}_{B}$. Also, for all $M\in {\mathcal M}^{H}_{B}$, the construction of $M^{co H}$ is functorial and we have a functor of coinvariants $G=(\;\;)^{co H}: {\mathcal M}^{H}_{B}\rightarrow {\mathcal C}_{B^{co H}}$ such that $F\dashv G$. Moreover, $F$ and $G$ induce a categorical equivalence between 
${\mathcal M}^{H}_{B}$ and  the category of right $B^{co H}$-modules. This categorical equivalence was called by Doi and Takeuchi in \cite{Doi-Take}, the strong structure theorem for ${\mathcal M}^{H}_{B}$, and, for $B=H$ and $h=id_{H}$, contains as a particular instance the equivalence derived of the Fundamental Theorem of Hopf modules proved by  Larson and Sweedler (see \cite{Lar-Sweed}, \cite{Sweedler}).

The categorical equivalence of the previous paragraph  remains valid for weak Hopf algebras. For a weak Hopf algebra $H$, B\"{o}hm introduced in \cite{bohm}  the category of Hopf modules, denoted by ${\mathcal M}^{H}_{H}$,  in the same way that in the Hopf algebra setting. If $M\in {\mathcal M}^{H}_{H}$, the subobject of coinvariants is defined by $M^{co H}=\{m\in M \;| \; \rho_{M}(m)=m_{[0]}\ot \Pi_{H}^{L}(m_{[1]})\}$, where $\Pi_{H}^{L}$ is the target morphism associated to $H$.  In \cite{bohm} we can find the weak version of the Fundamental Theorem of Hopf modules, i.e.: For all Hopf module $M$, $M^{co H}\ot_{H_{L}} H$ is isomorphic to  $M$ as Hopf modules, where $H_{L}$ is the image of $\Pi_{H}^{L}$. Moreover, if ${\mathcal C}_{H_{L}}$ is the category of right $H_{L}$-modules, there exist two functors $F=-\ot_{H_{L}} H:{\mathcal C}_{H_{L}}\rightarrow {\mathcal M}^{H}_{H}$ and $G=(\;\;)^{co H}: {\mathcal M}^{H}_{H}\rightarrow {\mathcal C}_{H_{L}}$ such that $F$ is left adjoint of $G$ and they induce a pair of inverse equivalences. Therefore, in the weak setting, ${\mathcal M}^{H}_{H}$ is equivalent to ${\mathcal C}_{H_{L}}$. In this case, the following property is a relevant fact for subsequent generalizations: there is an isomorphism of Hopf modules between the tensor product $M^{co H}\ot_{H_{L}} H$ and $M^{co H}\times H$, where $M^{co H}\times H$ is the image of a suitable idempotent morphism $\nabla_{M}:M^{co H}\otimes H\rightarrow M^{co H}\otimes H$.  Later, in \cite{bohm2}, B\"{o}hm  introduced the notion of weak Doi-Hopf module (or weak $(H,B)$-Doi-Hopf module), associated to a weak Hopf algebra $H$ and a right $H$-comodule algebra $B$, and the category of weak Doi-Hopf modules denoted as in the non-weak setting by ${\mathcal M}^{H}_{B}$.
In 2004, Zhang and Zhu proved that for any weak Doi-Hopf module $M$ (also called by these authors weak $(H,B)$-Doi-Hopf module), if there exists a right $H$-comodule map $h:H\rightarrow B$ which is an algebra map, the objects $M$ and $M^{co H}\ot_{B^{co H}} B$ are isomorphic as $(H,B)$-Doi-Hopf modules. In this case $B^{co H}=\{b\in B \;| \; \rho_{B}(b)=b_{(0)}\ot \Pi_{H}^{L}(b_{(1)})\}$ and, if $B=H$ 
and $h=id_{H}$, they recover the isomorphism constructed by B\"{o}hm  in \cite{bohm}. As in the Hopf setting, it is possible to construct the induction functor $F=-\ot_{B^{co H}} B:{\mathcal C}_{B^{co H}}\rightarrow {\mathcal M}^{H}_{B}$ and the functor of coinvariants $G=(\;\;)^{co H}: {\mathcal M}^{H}_{B}\rightarrow {\mathcal C}_{B^{co H}}$. These functors  satisfy  that $F\dashv G$ and $F$ and $G$ is a pair of inverse equivalences. Therefore, ${\mathcal M}^{H}_{B}$ is equivalent to the category of right $B^{co H}$-modules  (see \cite{Hanna}). 

In the two previous paragraphs we wrote about categorical equivalences for categories of Hopf modules connected to associative algebraic structures like Hopf algebras and weak Hopf algebras. An interesting generalization of Hopf algebras are nonassociative Hopf algebras. As in the quasi-Hopf setting, nonassociative Hopf algebras are not associative, but the lack of this property is compensated in this case by some axioms involving the division operation. The notion of nonassociative Hopf algebra in a category of vector spaces was introduced by P\'erez- Izquierdo  \cite{PI2} with the aim of to construct the universal enveloping algebra for Sabinin algebras, prove a Poincar\'e-Birkhoff-Witt Theorem for Sabinin algebras and give a nonassociative version of the Milnor-Moore theorem. Later, Klim and Majid \cite{Majidesfera}, in order
to understand the structure and relevant properties of the algebraic 7-sphere, introduced the notion of Hopf quasigroup.
Hopf quasigroups  are examples of nonassociative Hopf algebras and in recent years, interesting research about its specific structure and its dual has been developed (\cite{Brz}, \cite{Klim}, \cite{Brz2}, \cite{Brz3}, \cite{ZH}, \cite{ZX}, \cite{FT}, \cite{our1}, \cite{our2}).  Moreover, nonassociative Hopf algebras arise naturally related with other structures
in various nonassociative contexts like, for example quantum quasigroups in the sense of Smith (\cite{S1}, \cite{S2}, \cite{S3}, \cite{S4}).  Nonassociative Hopf algebras include the example of an enveloping algebra $U(L)$ of a Malcev algebra (see \cite{PIS}, \cite{Majidesfera}, \cite{TV}) as well as the notion of the loop algebra RL of a loop L (see \cite{BMP-I12}, \cite{MPIS14}). Then, nonassociative Hopf algebras unify Moufang loops and Malcev algebras, and, more generally, formal loops and Sabinin algebras, in the same way that Hopf algebras unify groups and Lie algebras.

For a of Hopf quasigroup in the sense of  \cite{Majidesfera}, Brzezi\'nski defined in \cite{Brz} the notion of Hopf module obtaining a categorical equivalence as in the associative context. In this case, the main difference  appears in the definition of the category of Hopf modules ${\mathcal M}^{H}_{H}$. Firstly, because the notion of Hopf module reflects the non-associativity of the product defined on $H$. Secondly,  the morphisms are $H$-quasilinear and $H$-colinear  (see Definition 3.4 of \cite{Brz}). In Lemma 3.5 of \cite{Brz}, we can find that, if $M\in {\mathcal M}^{H}_{H}$ and $M^{co H}$ is defined like in the Hopf algebra setting, $M$ is isomorphic to  $M^{co H}\ot H$ as Hopf modules. Therefore the Fundamental Theorem of Hopf modules also holds for Hopf quasigroups. Moreover, there exist two functors $F=-\ot H:{\mathcal C}\rightarrow {\mathcal M}^{H}_{H}$ and $G=(\;\;)^{co H}: {\mathcal M}^{H}_{H}\rightarrow {\mathcal C}$ such that  $F\dashv G$, and they induce a pair of inverse equivalences. Thus, as it occurs in the Hopf algebra ambit, ${\mathcal M}^{H}_{H}$ is equivalent to the category of ${\Bbb F}$-vector spaces.

Hopf quasigroups admit a generalization to the weak seetting. The new notion, called weak Hopf quasigroup, was introduced in \cite{Asian} in a monoidal context and a  family of non trivial examples  can be obtained by working with bigroupoids, i.e., bicategories where every $1$-cell is an equivalence and every $2$-cell is an isomorphism (see Example 2.3 of \cite{Asian}). In \cite{MJM} we described these algebraic objects in terms of fusion morphisms and  in \cite{Asian}, for a weak Hopf quasigroup $H$ in a  braided monoidal category ${\mathcal C}$ with tensor product $\ot$, using the ideas proposed by Brzezi\'nski for Hopf quasigroups,  we introduce  the notion of Hopf module and  the category of Hopf modules ${\mathcal M}^{H}_{H}$. In this case, if we define $M^{co H}$ in the same way that in the weak Hopf algebra setting, we obtain the weak nonassociative version of the  Fundamental Theorem of Hopf modules in the following way: every Hopf module $M$ is isomorphic to  $M^{co H}\times H$ as Hopf modules, where $M^{co H}\times H$ is the image of the  same idempotent $\nabla_{M}$ used for Hopf modules associated to a weak Hopf algebra.  Moreover, in \cite{JPAA} we proved that $H_{L}$, the image of the target morphism, is a monoid, and then it is possible to take into consideration the category ${\mathcal C}_{H_{L}}$ to construct the tensor product $M^{co H}\ot_{H_{L}} H$, and, if the functor $-\ot H$ preserves coequalizers, to endow this object with a Hopf module structure. Unfortunately, unlike the case of weak Hopf algebras, it is not possible to assure in general that $M^{co H}\ot_{H_{L}} H$ is isomorphic to $M^{co H}\times H$. In order to find
sufficient conditions under which these objects are isomorphic in ${\mathcal M}^{H}_{H}$,  we introduce in \cite{Strong} the category of strong Hopf modules, denoted by   ${\mathcal SM}^{H}_{H}$ and  obtain that there exist two functors $F=-\ot_{H_{L}} H:{\mathcal C}_{H_{L}}\rightarrow {\mathcal SM}^{H}_{H}$ and  $G=(\;\;)^{co H}: {\mathcal SM}^{H}_{H}\rightarrow {\mathcal C}_{H_{L}}$ such that $F$ is left adjoint of $G$ and they induce a pair of inverse equivalences. In the Hopf quasigroup setting every Hopf module is strong, and then our results are the ones proved by Brzezi\'nski in \cite{Brz}. The same happens in the weak Hopf case  and then we generalize the theorem proved by B\"{o}hm, Nill and Szlach\'anyi in \cite{bohm}.  

Let  ${\mathcal C}$ be a braided monoidal category with tensor product $\ot$. Then for a weak Hopf quasigroup $H$ in  ${\mathcal C}$ and a right $H$-comodule magma $B$ (see \cite{JPAA} for the definition),  a question naturally arises: Is it possible to define a general category of $(H,B)$-Hopf modules and to prove a general theorem that permit to recover as particular instances the categorical equivalences cited in the previous paragraphs? The main contribution of this paper is to give a positive answer to this question. 

Now, we describe the paper in detail. After this introduction, for a weak Hopf quasigroup $H$ and a  right $H$-comodule magma $B$ in a strict braided monoidal category  $\mathcal C$ where every idempotent morphism splits,  in the second section we introduce the notion of anchor morphism $h:H\rightarrow B$ as an $H$-comodule morphism such that it is a morphism of unital magmas satisfying two suitable conditions. For an anchor morphism $h$,  in Definition \ref{H-D-mod}, we define the notion of strong $(H,B,h)$-Hopf module and prove some properties of these modules. We also find the condition under which  the subobject of coinvariants of $B$, defined as in the weak Hopf algebra context, i.e., $B^{co H}=\{b\in B \;| \; \rho_{B}(b)=b_{(0)}\ot \Pi_{H}^{L}(b_{(1)})\}$, is a monoid, and  construct the new category of strong $(H,B,h)$-Hopf modules, denoted by ${\mathcal S M}^{H}_{B}$. Moreover, if the category ${\mathcal C}$ admits coequalizers and the functors $-\ot B$ and $-\ot H$ preserve coequalizers, we prove in Theorem \ref{main0} that the Fundamental Theorem of Hopf Modules holds. In other words, for any strong $(H,B,h)$-Hopf module the objects $M$ and $M^{co H}\ot_{B^{co H}} B$ are isomorphic in ${\mathcal S M}^{H}_{B}$. This result admits as  particular instances the results with the same name cited in the previous paragraphs for associative and nonassociative (weak) Hopf structures. Finally, in the last section, we define the induction functor $F=-\ot_{B^{co H}} B:{\mathcal C}_{B^{co H}}\rightarrow {\mathcal  S M}^{H}_{B}$ and the functor of coinvariants $G=(\;\;)^{co H}: {\mathcal S M}^{H}_{B}\rightarrow {\mathcal C}_{B^{co H}}$, proving that  $F\dashv G$. Also, $F$ and $G$ is a pair of inverse equivalences and, therefore, ${\mathcal S M}^{H}_{B}$ is equivalent to the category of right $B^{co H}$-modules.

Throughout this paper $\mathcal C$ denotes a strict braided monoidal category with tensor product $\ot$, unit object $K$ and  braid $c$.  Without loss of generality, by the  coherence theorems, we can assume the  monoidal structure of ${\mathcal C}$ strict. Then, in this paper, we omit  explicitly  the associativity and unit constraints. For each object $M$ in  $ {\mathcal C}$, we denote the identity morphism by $id_{M}:M\rightarrow M$ and, for simplicity of notation, given objects $M$, $N$ and $P$ in ${\mathcal C}$ and a morphism $f:M\rightarrow N$, we write $P\ot f$ for $id_{P}\ot f$ and $f \ot P$ for $f\ot id_{P}$. We also assume that every idempotent morphism in  $ {\mathcal C}$ splits, i.e., if $\nabla:Y\rightarrow Y$ is such that $\nabla=\nabla\co\nabla$, there exist an object $Z$, called the image of $p$,  and morphisms $i:Z\rightarrow Y$ and $p:Y\rightarrow Z$ such that $\nabla=i\co p$ and $p\co i =id_{Z}$. The morphisms $p$ and $i$ will be called a factorization of $q$. Note that $Z$, $p$ and $i$ are unique up to isomorphism. The categories satisfying this property constitute a broad class that includes, among others, the categories with epi-monic decomposition for morphisms and categories with equalizers or coequalizers.  For example, complete bornological spaces is a symmetric monoidal closed category that is not  abelian, but it does have coequalizers (see \cite{Meyer}). On the other hand, let {\bf Hilb} be the category whose objects are complex Hilbert spaces and whose morphisms are the continuous linear maps. Then {\bf Hilb} is not an abelian and closed category but it is a symmetric monoidal category (see \cite{Kad}) with coequalizers.

As for prerequisites,  the reader is expected to be familiar with the notions of (co)unital (co)magma, (co)monoid, and morphism of (co)unital (co)magmas. By a unital  magma in ${\mathcal C}$ we understand a triple $A=(A, \eta_{A}, \mu_{A})$ where $A$ is an object in ${\mathcal C}$ and $\eta_{A}:K\rightarrow A$ (unit), $\mu_{A}:A\ot A \rightarrow A$ (product) are morphisms in ${\mathcal C}$ such that $\mu_{A}\co (A\ot \eta_{A})=id_{A}=\mu_{A}\co (\eta_{A}\ot A)$. If $\mu_{A}$ is associative, that is, $\mu_{A}\co (A\ot \mu_{A})=\mu_{A}\co (\mu_{A}\ot A)$, the unital magma will be called a monoid in ${\mathcal C}$.   Given two unital magmas
(monoids) $A= (A, \eta_{A}, \mu_{A})$ and $B=(B, \eta_{B}, \mu_{B})$, $f:A\rightarrow B$ is a morphism of unital magmas (monoids)  if $\mu_{B}\co (f\ot f)=f\co \mu_{A}$ and $ f\co \eta_{A}= \eta_{B}$. 

By duality, a counital comagma in ${\mathcal C}$ is a triple ${D} = (D, \varepsilon_{D}, \delta_{D})$ where $D$ is an object in ${\mathcal C}$ and $\varepsilon_{D}: D\rightarrow K$ (counit), $\delta_{D}:D\rightarrow D\ot D$ (coproduct) are morphisms in ${\mathcal C}$ such that $(\varepsilon_{D}\ot D)\co \delta_{D}= id_{D}=(D\ot \varepsilon_{D})\co \delta_{D}$. If $\delta_{D}$ is coassociative, that is, $(\delta_{D}\ot D)\co \delta_{D}= (D\ot \delta_{D})\co \delta_{D}$, the counital comagma will be called a comonoid. If ${D} = (D, \varepsilon_{D}, \delta_{D})$ and ${ E} = (E, \varepsilon_{E}, \delta_{E})$ are counital comagmas
(comonoids), $f:D\rightarrow E$ is  a morphism of counital comagmas (comonoids) if $(f\ot f)\co \delta_{D} =\delta_{E}\co f$ and  $\varepsilon_{E}\co f =\varepsilon_{D}.$

If  $A$, $B$ are unital magmas (monoids) in ${\mathcal C}$, the object $A\ot B$ is a unital  magma (monoid) in ${\mathcal C}$ where $\eta_{A\ot B}=\eta_{A}\ot \eta_{B}$ and $\mu_{A\ot B}=(\mu_{A}\ot \mu_{B})\co (A\ot c_{B,A}\ot B).$  In a dual way, if $D$, $E$ are counital comagmas (comonoids) in ${\mathcal C}$, $D\ot E$ is a  counital comagma (comonoid) in ${\mathcal C}$ where $\varepsilon_{D\ot E}=\varepsilon_{D}\ot \varepsilon_{E}$ and $\delta_{D\ot
E}=(D\ot c_{D,E}\ot E)\co( \delta_{D}\ot \delta_{E}).$

 Let  $A$ be a monoid. The pair
$(M,\phi_{M})$ is a right $A$-module if $M$ is an object in
${\mathcal C}$ and $\phi_{M}:M\otimes A\rightarrow M$ is a morphism
in ${\mathcal C}$ satisfying $\phi_{M}\circ(M\otimes
\eta_{A})=id_{M}$, $\phi_{M}\circ (\phi_{M}\otimes A)=\phi_{M}\circ
(M\otimes \mu_{A})$. Given two right ${A}$-modules $(M,\phi_{M})$
and $(N,\phi_{N})$, $f:M\rightarrow N$ is a morphism of right
${A}$-modules if $\phi_{N}\circ (f\otimes A)=f\circ \phi_{M}$.  If $D$ is a comonoid, the pair
$(M,\rho_{M})$ is a right $D$-comodule if $M$ is an object in
${\mathcal C}$ and $\rho_{M}:M\rightarrow M\ot D$ is a morphism
in ${\mathcal C}$ satisfying $(M\ot \varepsilon_{D})\co \rho_{M}=id_{M}$, $(\rho_{M}\ot D)\co \rho_{M}=(M\ot \delta_{D})\co \rho_{M}$. Given two right ${D}$-comodules $(M,\rho_{M})$
and $(N,\rho_{N})$, $f:M\rightarrow N$ is a morphism of right
${D}$-comodules if $(f\otimes D)\co \rho_{M}=\rho_{N}\co f$.

Finally, if $D$ is a comagma and $A$ a magma, given two morphisms $f,g:D\rightarrow A$ we will denote by $f\ast g$ its convolution product in ${\mathcal C}$, that is 
$$f\ast g=\mu_{A}\co (f\ot g)\co \delta_{D}.$$ 

\section{Doi-Hopf modules for weak Hopf quasigroups}

We begin this section by recalling the notion of weak Hopf quasigroup in a braided monoidal category  introduced in \cite{Asian}. In this reference the interested reader can find  an exhaustive list of  properties of weak Hopf quasigroups, that we will need along the paper.

\begin{definition}
\label{W-H-quasi}
{\rm A weak Hopf quasigroup $H$   in ${\mathcal C}$ is a unital magma $(H, \eta_H, \mu_H)$ and a comonoid $(H,\varepsilon_H, \delta_H)$ such that the following axioms hold:

\begin{itemize}

\item[(a1)] $\delta_{H}\co \mu_{H}=(\mu_{H}\ot \mu_{H})\co \delta_{H\ot H}.$

\item[(a2)] $\varepsilon_{H}\co \mu_{H}\co (\mu_{H}\ot H)=\varepsilon_{H}\co \mu_{H}\co (H\ot \mu_{H})$

\item[ ]$= ((\varepsilon_{H}\co \mu_{H})\ot (\varepsilon_{H}\co \mu_{H}))\co (H\ot \delta_{H}\ot H)$ 

\item[ ]$=((\varepsilon_{H}\co \mu_{H})\ot (\varepsilon_{H}\co \mu_{H}))\co (H\ot (c_{H,H}^{-1}\co\delta_{H})\ot H).$

\item[(a3)]$(\delta_{H}\ot H)\co \delta_{H}\co \eta_{H}=(H\ot \mu_{H}\ot H)\co ((\delta_{H}\co \eta_{H}) \ot (\delta_{H}\co \eta_{H}))$  \item[ ]$=(H\ot (\mu_{H}\co c_{H,H}^{-1})\ot H)\co ((\delta_{H}\co \eta_{H}) \ot (\delta_{H}\co \eta_{H})).$

\item[(a4)] There exists  $\lambda_{H}:H\rightarrow H$ in ${\mathcal C}$ (called the antipode of $H$) such that, if we denote the morphisms $id_{H}\ast \lambda_{H}$ by  $\Pi_{H}^{L}$ (target morphism) and $\lambda_{H}\ast id_{H}$ by $\Pi_{H}^{R}$ (source morphism),

\begin{itemize}

\item[(a4-1)] $\Pi_{H}^{L}=((\varepsilon_{H}\co \mu_{H})\ot H)\co (H\ot c_{H,H})\co ((\delta_{H}\co \eta_{H})\ot
H).$

\item[(a4-2)] $\Pi_{H}^{R}=(H\ot(\varepsilon_{H}\co \mu_{H}))\co (c_{H,H}\ot H)\co (H\ot (\delta_{H}\co \eta_{H})).$

\item[(a4-3)]$\lambda_{H}\ast \Pi_{H}^{L}=\Pi_{H}^{R}\ast \lambda_{H}= \lambda_{H}.$

\item[(a4-4)] $\mu_H\co (\lambda_H\ot \mu_H)\co (\delta_H\ot H)=\mu_{H}\co (\Pi_{H}^{R}\ot H).$

\item[(a4-5)] $\mu_H\co (H\ot \mu_H)\co (H\ot \lambda_H\ot H)\co (\delta_H\ot H)=\mu_{H}\co (\Pi_{H}^{L}\ot H).$

\item[(a4-6)] $\mu_H\co(\mu_H\ot \lambda_H)\co (H\ot \delta_H)=\mu_{H}\co (H\ot \Pi_{H}^{L}).$

\item[(a4-7)] $\mu_H\co (\mu_H\ot H)\co (H\ot \lambda_H\ot H)\co (H\ot \delta_H)=\mu_{H}\co (H\ot \Pi_{H}^{R}).$

\end{itemize}

\end{itemize}

}

\end{definition}

Note that, if in the previous definition the triple $(H, \eta_H, \mu_H)$ is a monoid, we obtain the notion of weak Hopf algebra in a symmetric monoidal category. Then, if ${\mathcal C}$ is the category of vector spaces over a field ${\Bbb F}$, we have  the original definition of weak Hopf algebra introduced by B\"{o}hm, Nill and Szlach\'anyi in \cite{bohm}. On the other hand, under these conditions, if  $\varepsilon_H$ and $\delta_H$ are  morphisms of unital magmas (equivalently, $\eta_{H}$, $\mu_{H}$ are morphisms of counital comagmas), $\Pi_{H}^{L}=\Pi_{H}^{R}=\eta_{H}\ot \varepsilon_{H}$. As a consequence, conditions (a2), (a3), (a4-1)-(a4-3) trivialize, and we get the notion of Hopf quasigroup defined  by Klim and Majid in \cite{Majidesfera}.  More concretely, a Hopf quasigroup $H$ in
${\mathcal C}$  is a unital magma $(H,\eta_H,\mu_H)$ and a comonoid $(H,\varepsilon_H,\delta_H)$ satisfying that  $\varepsilon_H$ and $\delta_H$ are morphisms of unital magmas (equivalently, $\eta_H$ and $\mu_H$ are morphisms of counital comagmas), and such that there exists a morphism $\lambda_{H}:H\rightarrow H$
in ${\mathcal C}$, called the antipode of $H$, for which  
\begin{equation}
\label{leftHqg}
\mu_H\circ (\lambda_H\ot \mu_H)\circ (\delta_H\ot H)=
\varepsilon_H\ot H=
\mu_H\circ (H\ot \mu_H)\circ (H\ot \lambda_H\ot H)\circ (\delta_H\ot H)
\end{equation}
 and 
 \begin{equation}
\label{rightHqg}
\mu_H\circ (\mu_H\ot H)\circ (H\ot \lambda_H\ot H)\circ (H\ot \delta_H)=
H\ot \varepsilon_H=
\mu_H\circ(\mu_H\ot \lambda_H)\circ (H\ot \delta_H) 
\end{equation}
hold. Then, as a consequence,  we have (a1)  and the following identities
\begin{equation}
\label{e-m-d-eps}
\varepsilon_{H}\co \eta_{H}=id_{K},\;\;
\varepsilon_{H}\co \mu_{H}=\varepsilon_{H}\ot \varepsilon_{H},\;\;
\delta_{H}\co \eta_{H}=\eta_{H}\ot \eta_{H}.
\end{equation}

By Proposition 3.2 of  \cite{Asian} we know that the antipode  of a weak Hopf quasigroup  is unique, and satisfies that $\lambda_{H}\co \eta_{H}=\eta_{H}$,  $\varepsilon_{H}\co\lambda_{H}=\varepsilon_{H}$. Also, by Theorem 3.19 of \cite{Asian}, we have that it is antimultiplicative and anticomultiplicative. Moreover, if we define the morphisms $\overline{\Pi}_{H}^{L}$ and $\overline{\Pi}_{H}^{R}$ by 
$$\overline{\Pi}_{H}^{L}=(H\ot (\varepsilon_{H}\co \mu_{H}))\co ((\delta_{H}\co \eta_{H})\ot H),\;\;\;\overline{\Pi}_{H}^{R}=((\varepsilon_{H}\co \mu_{H})\ot H)\co (H\ot (\delta_{H}\co \eta_{H})),$$
we proved in Proposition 3.4 of \cite{Asian}, that $\Pi_{H}^{L}$, $\Pi_{H}^{R}$, $\overline{\Pi}_{H}^{L}$ and 
$\overline{\Pi}_{H}^{R}$ are idempotent. 

\begin{lemma}
\label{monoid-hl}
Let $H$ be a weak Hopf quasigroup and $\Pi\in\{\Pi_{H}^{L},\Pi_{H}^{R},\overline{\Pi}_{H}^{L}, \overline{\Pi}_{H}^{R}\}$ . The following identities hold:
\begin{equation}
\label{monoid-hl-1}
\mu_{H}\co ((\mu_{H}\co (\Pi\ot H))\ot H)=\mu_{H}\co (\Pi\ot \mu_{H}), 
\end{equation}
\begin{equation}
\label{monoid-hl-2}
\mu_{H}\co (H\ot (\mu_{H}\co (\Pi\ot H)))=\mu_{H}\co ((\mu_{H}\co (H\ot \Pi))\ot H), 
\end{equation}
\begin{equation}
\label{monoid-hl-3}
\mu_{H}\co (H\ot (\mu_{H}\co (H\ot \Pi)))=\mu_{H}\co (\mu_{H}\ot \Pi). 
\end{equation}
\end{lemma}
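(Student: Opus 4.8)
The plan is to read the three displayed identities as associativity statements for the non-associative product $\mu_{H}$: equation \eqref{monoid-hl-1} says that $\mu_{H}$ associates whenever its left entry lies in the image of $\Pi$, \eqref{monoid-hl-2} whenever its middle entry does, and \eqref{monoid-hl-3} whenever its right entry does. Since $\Pi$ ranges over four idempotents, there are in principle twelve verifications, so the first step is to cut this number down. On one hand, the defining axioms come in left/right dual pairs: (a2) and (a3) each have a plain and a $c_{H,H}^{-1}$-twisted form, and (a4-4)--(a4-5) mirror (a4-6)--(a4-7). This built-in duality of the axiom set lets the source/left versions be obtained from the target/right versions by replaying every argument with $\mu_{H}$, $\delta_{H}$ and the braiding reversed, pairing $\Pi_{H}^{L}$ with $\Pi_{H}^{R}$, $\overline{\Pi}_{H}^{L}$ with $\overline{\Pi}_{H}^{R}$, and \eqref{monoid-hl-1} with \eqref{monoid-hl-3} (leaving \eqref{monoid-hl-2} self-dual). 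On the other hand, by the explicit descriptions (a4-1), (a4-2) of $\Pi_{H}^{L}$, $\Pi_{H}^{R}$ and the definitions of $\overline{\Pi}_{H}^{L}$, $\overline{\Pi}_{H}^{R}$, every one of the four idempotents is built only from $\varepsilon_{H}\co\mu_{H}$, $\delta_{H}\co\eta_{H}$, the braiding and $\mu_{H}$, which is exactly the data governed by (a2) and (a3).

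I would then carry out the genuinely different computations for the target idempotent $\Pi_{H}^{L}$. Here the engine is the flexibility pair (a4-5) and (a4-6), which rewrite $\mu_{H}\co(\Pi_{H}^{L}\ot H)$ and $\mu_{H}\co(H\ot\Pi_{H}^{L})$ as the nested antipode expressions $\mu_{H}\co(H\ot\mu_{H})\co(H\ot\lambda_{H}\ot H)\co(\delta_{H}\ot H)$ and $\mu_{H}\co(\mu_{H}\ot\lambda_{H})\co(H\ot\delta_{H})$. For \eqref{monoid-hl-1} I substitute (a4-5) into both sides, expand the coproducts via coassociativity of $\delta_{H}$, and collapse the two antipode-decorated triples against one another using (a4-3), the antimultiplicativity and anticomultiplicativity of $\lambda_{H}$ (from \cite{Asian}), and the compatibility (a1) between $\delta_{H}$ and $\mu_{H}$, so that both bracketings reduce to a common morphism. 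The middle identity \eqref{monoid-hl-2} combines (a4-5) with (a4-6), and the right identity \eqref{monoid-hl-3} uses (a4-6) in the mirror way.

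For the source idempotents $\overline{\Pi}_{H}^{L}$ and $\overline{\Pi}_{H}^{R}$, where the antipode is absent, I would instead feed the defining formulas directly into the three identities. After using the unital-magma axioms to position the factors, each side becomes an iterated $\mu_{H}$-product of the two legs of $\delta_{H}\co\eta_{H}$ against the remaining inputs, contracted through $\varepsilon_{H}\co\mu_{H}$, and the reassociation that separates the two sides is absorbed by a single application of (a3) (acting on the legs of $\delta_{H}\co\eta_{H}$) or of (a2) (acting on the $\varepsilon_{H}\co\mu_{H}$ contraction), in the plain or $c_{H,H}^{-1}$-twisted form according to the position of $\Pi$. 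Alternatively, and with less bookkeeping, one can transport the already established $\Pi_{H}^{L}$, $\Pi_{H}^{R}$ cases to $\overline{\Pi}_{H}^{L}$, $\overline{\Pi}_{H}^{R}$ through the relations between the barred and unbarred projections recorded in \cite{Asian}.

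The main obstacle throughout is precisely the failure of associativity: no bracket may be moved for free, so every reassociation must be licensed by exactly one of (a1), (a2), (a3), (a4-3)--(a4-7) or an established antipode identity. The delicate point is to arrange each side so that a single such application converts the left-bracketed triple product into the right-bracketed one, while correctly transporting the braidings introduced by the descriptions (a4-1), (a4-2). In the non-symmetric braided setting it is this braid bookkeeping, rather than any individual algebraic identity, that makes the verification lengthy.
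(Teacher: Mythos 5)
The paper's own proof of this lemma is purely by citation: the $\Pi_{H}^{L}$ case is Proposition 2.4 of \cite{JPAA}, the $\Pi_{H}^{R}$ case is said to be analogous (see Proposition 2.3 of \cite{MJM}), and the barred cases are Proposition 3.11 of \cite{Asian}. Your proposal attempts to reconstruct those computations, but its central step is circular. For $\Pi_{H}^{L}$ you propose to substitute (a4-5) (resp.\ (a4-6)) into both sides of the identities and then ``collapse'' using (a1), (a4-3), coassociativity and the anti(co)multiplicativity of $\lambda_{H}$. Carrying out the substitution in (\ref{monoid-hl-1}), both occurrences of $\mu_{H}\co (\Pi_{H}^{L}\ot H)$ get replaced via (a4-5), and the statement becomes
$$\mu_{H}\co ((\mu_{H}\co (H\ot \mu_{H})\co (H\ot \lambda_{H}\ot H)\co (\delta_{H}\ot H))\ot H)=\mu_{H}\co (H\ot \mu_{H})\co (H\ot \lambda_{H}\ot H)\co (\delta_{H}\ot \mu_{H}),$$
i.e.\ $(x_{(1)}(\lambda_{H}(x_{(2)})y))z=x_{(1)}(\lambda_{H}(x_{(2)})(yz))$. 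Applying (a4-5) once more turns this back into (\ref{monoid-hl-1}) for $\Pi_{H}^{L}$: the rewriting is reversible, so it is \emph{exactly equivalent} to what you are trying to prove, and no progress has been made. None of the tools you then invoke can close this: (a1) rewrites $\delta_{H}\co\mu_{H}$, (a4-3) and anti(co)multiplicativity rewrite the $\lambda_{H}$-decorated legs, but none of them licenses moving the undecorated factor $z$ across a bracket, which is the whole content of the identity. The same objection applies to your treatment of (\ref{monoid-hl-2}) and (\ref{monoid-hl-3}) via the (a4-5)/(a4-6) pair. So for $\Pi_{H}^{L}$ and $\Pi_{H}^{R}$ your plan has no engine.

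The machinery that actually proves these identities is the one you reserve for the barred projections only: the antipode-free descriptions (a4-1), (a4-2) (together with the definitions of $\overline{\Pi}_{H}^{L}$, $\overline{\Pi}_{H}^{R}$), which express all four idempotents through $\varepsilon_{H}\co\mu_{H}$, $\delta_{H}\co\eta_{H}$ and the braiding, combined with axioms (a2) and (a3) --- the only axioms of Definition \ref{W-H-quasi} that permit reassociating products weighted by counit factors or involving the legs of $\delta_{H}\co\eta_{H}$. This works uniformly for all four idempotents and is what the cited propositions rely on. Your alternative of transporting the unbarred cases to the barred ones through the composition relations of \cite{Asian} (e.g.\ $\Pi_{H}^{L}\co\overline{\Pi}_{H}^{R}=\overline{\Pi}_{H}^{R}$ and $\overline{\Pi}_{H}^{R}\co\Pi_{H}^{L}=\Pi_{H}^{L}$) is sound --- precomposing (\ref{monoid-hl-1})--(\ref{monoid-hl-3}) for $\Pi_{H}^{L}$ with $\overline{\Pi}_{H}^{R}\ot H\ot H$ and its analogues does yield the barred cases --- but it presupposes the unbarred cases, which is precisely where your argument is circular. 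Finally, the op/coop duality you use to halve the number of verifications is itself a claim requiring proof in the braided nonassociative weak setting (stability of the axiom set under the duality and the asserted exchange of the four projections); it is plausible, but as stated it is another unsupported reduction rather than a secondary convenience.
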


\begin{proof}

The proof for $\Pi_{H}^{L}$ is  in Proposition 2.4 of \cite{JPAA} and in a similar way we can prove the result for $\Pi_{H}^{R}$  (see also Proposition 2.3 of \cite{MJM}). The equalities for $\overline{\Pi}_{H}^{L}$ and $\overline{\Pi}_{H}^{R}$ follow from  Proposition 3.11 of \cite{Asian}. 

\end{proof}

If $H_{L}$ is the image of the idempotent morphism $\Pi_{H}^{L}$, and 
$p_{L}:H\rightarrow H_{L}$, $i_{L}:H_{L}\rightarrow H$ are the
morphisms such that $\Pi_{H}^{L}=i_{L}\co p_{L}$ and $p_{L}\co
i_{L}=id_{H_{L}}$, by Proposition 3.13 of \cite{Asian}, $i_{L}$ is the equalizer of $\delta_{H}$ and $(H\ot \Pi_{H}^{L}) \co
\delta_{H}$ and  $p_{L}$ is the coequalizer of $\mu_{H}$ and $\mu_{H}\co (H\ot \Pi_{H}^{L})$. Then the triple $(H_{L}, \varepsilon_{H_{L}}=\varepsilon_{H}\co i_{L}, \delta_{H}=(p_{L}\ot
p_{L})\co \delta_{H}\co i_{L})$ is a comonoid in ${\mathcal C}$, and as a consequence of Lemma \ref{monoid-hl},  $(H_{L},
\eta_{H_{L}}=p_{L}\co \eta_{H}, \mu_{H_{L}}=p_{L}\co \mu_{H}\co
(i_{L}\ot i_{L}))$ is a monoid in ${\mathcal C}$. Following Remark 3.15 of \cite{Asian}, we have similar results for the image of the idempotent morphism $\Pi_{H}^{R}$ denoted by $H_{R}$. 

\begin{definition}
\label{H-comodmag}
{\rm  Let $H$ be a weak Hopf quasigroup
and let $B$ be a unital magma, which is also a right
$H$-comodule with coaction $\rho_{B}:B\rightarrow B\ot H$  such that 
\begin{equation}
\label{chmagma}
\mu_{B\ot H}\co (\rho_{B}\ot
\rho_{B})=\rho_{B}\co \mu_{B}.
\end{equation}
 We will say that $(B,\rho_{B})$ is a right $H$-comodule
magma if any of the following equivalent conditions hold:
\begin{itemize}
\item[(b1)]$(\rho_{B}\ot H)\co \rho_{B}\co
\eta_{B}=(B\ot (\mu_{H}\co c_{H,H}^{-1})\ot H)\co
((\rho_{B}\co \eta_{B})\ot (\delta_{H}\co \eta_{H})). $
\item[(b2)]$(\rho_{B}\ot H)\co \rho_{B}\co
\eta_{B}=(B\ot \mu_{H}\ot H)\co ((\rho_{B}\co \eta_{B})\ot (\delta_{H}\co \eta_{H})). $
\item[(b3)]$(B\ot \overline{\Pi}_{H}^{R})\co
\rho_{B}=(\mu_{B}\ot H)\co (B\ot (\rho_{B}\co \eta_{B})),$
\item[(b4)]$(B\ot \Pi_{H}^{L})\co \rho_{B}= ((\mu_{B}\co
c_{B,B}^{-1})\ot H)\co (B\ot (\rho_{B}\co \eta_{B})).$
\item[(b5)]$(B\ot \overline{\Pi}_{H}^{R})\co \rho_{B}\co
\eta_{B}=\rho_{B}\co\eta_{B}.$
\item[(b6)]$ (B\ot \Pi_{H}^{L})\co \rho_{B}\co
\eta_{B}=\rho_{B}\co\eta_{B}.$
\end{itemize}

This definition is similar to the notion of right $H$-comodule monoid  in the weak Hopf algebra setting and the proof for the equivalence of (b1)-(b6) also follows in a similar way.  

Note that, if $H$ is a Hopf quasigroup  and $B$ is a unital magma which is also a right
$H$-comodule with coaction $\rho_{B}:B\rightarrow B\ot H$, we will say that $(B,\rho_{B})$ is a right $H$-comodule magma if it satisfies (\ref{chmagma}) and $\rho_{B}\co \eta_{B}=\eta_{H}\ot \eta_{B}$. In this case (b1)-(b6) trivialize.
}
\end{definition}

\begin{example}
\label{ex-hcm}
{\rm 
1. If $H$ is a (weak) Hopf quasigroup, $(H, \delta_H)$ is a right $H$-comodule magma. 

2. Let $H$ be a cocommutative weak Hopf quasigrop and assume that ${\mathcal C}$ is symmetric. Then, $c_{H,H}\co \delta_{H}=\delta_{H}$ and, by Theorem 3.22 of \cite{Asian}, $\lambda_{H}^{2}=id_{H}$. If  we denote by  $H^{op}$ the unital magma $H^{op}=(H, \eta_{H^{op}}=\eta_{H}, \mu_{H^{op}}=\mu_{H}\co c_{H,H})$, we have that $(H^{op}, \rho_{H^{op}}=(H\ot \lambda_{H})\co \delta_{H})$ is an example of right $H$-comodule magma.  Indeed, first note that $H^{op}$ is a unital magma. Also $(H\ot \varepsilon_{H})\co\rho_{H^{op}}=id_{H}$ because $\lambda_{H}$ presevers the counit. On the other hand,   if $H$ is cocommutative, the equality
\begin{equation}
\label{2ex}
\delta_{H}\co \lambda_{H}=(\lambda_{H}\ot \lambda_{H})\co \delta_{H}
\end{equation}
holds. Then, by the coassociativity of $\delta_{H}$ and (\ref{2ex}), we obtain that $(\rho_{H^{op}}\ot  H)\co \rho_{H^{op}}=(H\ot \delta_{H})\co \rho_{H^{op}}$, an we have that $(H^{op}, \rho_{H^{op}})$ is a right $H$-comodule. Finally, 

\begin{itemize}
\item[ ]$\hspace{0.38cm} \rho_{H^{op}}\co \mu_{H^{op}}$

\item[ ]$= (\mu_{H}\ot (\lambda_{H}\co \mu_{H}))\co (c_{H,H}\ot c_{H,H})\co \delta_{H\ot H} $ {\scriptsize   ({\blue naturality of $c$ and $c^{2}=id_{H}$})}

\item[ ]$=(\mu_{H^{op}}\ot (\mu_{H}\co (\lambda_{H}\ot \lambda_{H})))\co  \delta_{H\ot H} $ {\scriptsize  ({\blue (53) of \cite{Asian} and $c^{2}=id_{H}$})}

\item[ ]$=(\mu_{H^{op}}\ot \mu_{H})\co  (H\ot c_{H,H}\ot H)\co (\rho_{H^{op}}\ot \rho_{H^{op}})$ {\scriptsize  ({\blue naturality of $c$})}

\end{itemize}

and 

\begin{itemize}
\item[ ]$\hspace{0.38cm} \rho_{H^{op}}\co \eta_{H^{op}}$

\item[ ]$= (H\ot (\lambda_{H}\co \Pi_{H}^{L})) \co \delta_{H}\co \eta_{H} $ {\scriptsize  ({\blue (21) of \cite{Asian}})}

\item[ ]$=(H\ot (\lambda_{H}\co \overline{\Pi}_{H}^{L})) \co \delta_{H}\co \eta_{H}  $ {\scriptsize  ({\blue if $H$ is cocommutative $\Pi_{H}^{L}=\overline{\Pi}_{H}^{L}$})}

\item[ ]$= (H\ot  \Pi_{H}^{L}) \co \delta_{H}\co \eta_{H} $ {\scriptsize ({\blue (39) of \cite{Asian}})}

\item[ ]$=(H\ot  (\Pi_{H}^{L}\co \Pi_{H}^{L})) \co \delta_{H}\co \eta_{H} $ {\scriptsize  ({\blue $\Pi_{H}^{L}$ is idempotent})}

\item[ ]$= (H\ot \Pi_{H}^{L}) \co \rho_{H^{op}}\co \eta_{H} $ {\scriptsize  ({\blue  (21) of \cite{Asian}})}.

\end{itemize}

Therefore, $(H^{op}, \rho_{H^{op}})$ is a right $H$-comodule magma. Note that, if $H$ is a Hopf quasigroup we have the same example.

3. By Example 3.1 of \cite{our2} we have the following. Let $H$ be a Hopf quasigroup and $A$ a  unital magma in ${\mathcal C}$. If there exists a morphism $\varphi_A:H\ot A\to A$ such that
\begin{equation}
\label{et1}
\varphi_A\co(\eta_H\ot A)= id_A,
\end{equation}
\begin{equation}
\label{et2}
\varphi_A\co(H\ot\eta_A)=\varepsilon_H\ot\eta_A,
\end{equation}
hold, then the smash product $A\sharp H=(A\otimes H,\eta_{A\sharp H},\mu_{A\sharp H})$ defined by 
$$\eta_{A\sharp H}=\eta_A\ot\eta_H,\;\;
\mu_{A\sharp H}=(\mu_A\ot \mu_H)\co(A\ot \psi_{H}^{A}\ot H),$$
where 
$$\psi_{H}^{A}=(\varphi_{A}\ot H)\co (H\ot c_{H,A})\co (\delta_{H}\ot A),$$ is a  right $H$-comodule magma with comodule structure given by 
$$\varrho_{A\sharp H}=A\ot\delta_H.$$

4. Let $H$, $B$ two Hopf quasigroups. Assume that there exists a morphism of Hopf quasigroups  $g:B\rightarrow H$, i.e., a morphism of unital magmas and comonoids. Then, $(B,\rho_{B}=(B\ot g)\co \delta_{B})$ is an example of right $H$-comodule magma.

}
\end{example}

\begin{definition}
Let $H$ be a weak Hopf quasigroup and let $(B,\rho_{B})$ be a right $H$-comodule
magma. We will say that $h:H\rightarrow B$ is an integral if it is a morphism of right $H$-comodules. The integral will be called total if $h\co \eta_{H}=\eta_{B}$.
\end{definition}

\begin{proposition}
\label{idem-B}
Let $H$ be a weak Hopf quasigroup and let $(B,\rho_{B})$ be a right $H$-comodule
magma. Let $h:H\rightarrow B$ be a total integral. The endomorphism $q_{B}:=\mu_{B}\co (B\ot (h\co \lambda_{H}))\co \rho_{B}:B\rightarrow B$  satisfies 
\begin{equation}
\label{idemp-0}
\rho_{B}\co q_{B}=(B\ot \Pi_{H}^{L})\co\rho_{B}\co q_{B},
\end{equation}
\begin{equation}
\label{idemp-1}
\rho_{B}\co q_{B}=(B\ot \overline{\Pi}_{H}^{R})\co\rho_{B}\co q_{B},
\end{equation}
and, as a consequence, $q_{B}$ is an idempotent morphism. Moreover, if $B^{co H}$ (object of coinvariants)  is the image of $q_{B}$ and $p_{B}:B\rightarrow B^{co H}$, $i_{B}:B^{co H}\rightarrow B$ are the morphisms such that $q_{B}=i_{B}\co p_{B}$ and 
 $id_{B^{co H}}=p_{B}\co i_{B}$, 
$$
\setlength{\unitlength}{3mm}
\begin{picture}(30,4)
\put(3,2){\vector(1,0){4}} \put(11,2.5){\vector(1,0){10}}
\put(11,1.5){\vector(1,0){10}} \put(1,2){\makebox(0,0){$B^{co H}$}}
\put(9,2){\makebox(0,0){$B$}} \put(24,2){\makebox(0,0){$B\ot H,$}}
\put(5.5,3){\makebox(0,0){$i_{B}$}}
\put(16,3.5){\makebox(0,0){$ \rho_{B}$}}
\put(16,0.15){\makebox(0,0){$(B\ot \overline{\Pi}_{H}^{R})\co\rho_{B}$}}
\end{picture}
$$ 

$$
\setlength{\unitlength}{3mm}
\begin{picture}(30,4)
\put(3,2){\vector(1,0){4}} \put(11,2.5){\vector(1,0){10}}
\put(11,1.5){\vector(1,0){10}} \put(1,2){\makebox(0,0){$B^{co H}$}}
\put(9,2){\makebox(0,0){$B$}} \put(24,2){\makebox(0,0){$B\ot H,$}}
\put(5.5,3){\makebox(0,0){$i_{B}$}}
\put(16,3.25){\makebox(0,0){$ \rho_{B}$}}
\put(16,0.15){\makebox(0,0){$(B\ot \Pi_{H}^{L})\co\rho_{B}$}}
\end{picture}
$$
are equalizer diagrams. 
\end{proposition}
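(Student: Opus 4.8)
The plan is to establish the two coinvariance identities (\ref{idemp-0}) and (\ref{idemp-1}) by a direct computation, and then to reduce both the idempotency of $q_{B}$ and the universal property of the two diagrams to a single fact: that $q_{B}$ restricts to the identity on coinvariants.

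Before computing I would record the three basic tools. Since the integral $h$ is a morphism of right $H$-comodules, $\rho_{B}\circ h=(h\otimes H)\circ\delta_{H}$; by Theorem 3.19 of \cite{Asian} the antipode is anticomultiplicative, so that $\delta_{H}\circ\lambda_{H}$ may be rewritten through $\lambda_{H}\otimes\lambda_{H}$ and the braiding; and, as $(B,\rho_{B})$ is a comodule magma, $\rho_{B}\circ\mu_{B}$ expands by (\ref{chmagma}). To prove (\ref{idemp-0}) I would expand $\rho_{B}\circ q_{B}=\rho_{B}\circ\mu_{B}\circ(B\otimes(h\circ\lambda_{H}))\circ\rho_{B}$ using (\ref{chmagma}), push the outer coaction through $h$ and through $\lambda_{H}$ with the two identities just mentioned, and use coassociativity of $\rho_{B}$ to separate the iterated comultiplication legs. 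The decisive point is to show that the $H$-component so obtained lies in the image of $\Pi_{H}^{L}$; this is where suitable instances of the antipode axioms (a4-4)--(a4-7), the monoid identities of Lemma \ref{monoid-hl}, and the comodule-magma conditions (b4), (b6) enter. Identity (\ref{idemp-1}) would be obtained symmetrically, with $\overline{\Pi}_{H}^{R}$ in place of $\Pi_{H}^{L}$ and using the conditions (b3), (b5). In particular both identities say that the image of $q_{B}$ is contained in the coinvariants.

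Next I would isolate the following claim, which does the real work: if $t\colon T\rightarrow B$ satisfies $\rho_{B}\circ t=(B\otimes\Pi_{H}^{L})\circ\rho_{B}\circ t$, then $q_{B}\circ t=t$. To prove it I would start from $q_{B}\circ t=\mu_{B}\circ(B\otimes(h\circ\lambda_{H}))\circ\rho_{B}\circ t$, insert the hypothesis, rewrite $(B\otimes\Pi_{H}^{L})\circ\rho_{B}$ by condition (b4) so that the computation is driven by the unit coaction $\rho_{B}\circ\eta_{B}$, and then collapse the result to $t$ using the total-integral condition $h\circ\eta_{H}=\eta_{B}$ together with the antipode axioms. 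A convenient intermediate step is $q_{B}\circ\eta_{B}=\eta_{B}$, which isolates the interaction of $h\circ\lambda_{H}$ with $\rho_{B}\circ\eta_{B}$. I expect this claim to be the main obstacle: because $B$ is only a magma, products cannot be re-bracketed freely, so the telescoping that in the associative case is immediate must here be carried out through the division-type axioms (a4-4)--(a4-7), which are precisely the nonassociative substitute for associativity.

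Finally I would assemble the statement. Idempotency is the claim applied to $t=q_{B}$, which is legitimate since $q_{B}$ satisfies the hypothesis by (\ref{idemp-0}); hence $q_{B}\circ q_{B}=q_{B}$. Writing the splitting $q_{B}=i_{B}\circ p_{B}$, $p_{B}\circ i_{B}=id_{B^{co H}}$, I would deduce from (\ref{idemp-0}) and the fact that $p_{B}$ is a (split) epimorphism that $\rho_{B}\circ i_{B}=(B\otimes\Pi_{H}^{L})\circ\rho_{B}\circ i_{B}$, so $i_{B}$ equalizes the pair in the second diagram. For the universal property, given any $t$ equalizing $\rho_{B}$ and $(B\otimes\Pi_{H}^{L})\circ\rho_{B}$, the claim gives $q_{B}\circ t=t$, whence $t=i_{B}\circ(p_{B}\circ t)$ factors through $i_{B}$; the factorization is unique because $i_{B}$ is a (split) monomorphism. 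The first diagram, with $\overline{\Pi}_{H}^{R}$, is treated identically using (\ref{idemp-1}) and the analogue of the claim for $\overline{\Pi}_{H}^{R}$.
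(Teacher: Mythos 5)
Your overall architecture is the same as the paper's: prove (\ref{idemp-0}) and (\ref{idemp-1}), isolate a central claim of the form ``if $\rho_{B}\co t=(B\ot \Pi_{H}^{L})\co\rho_{B}\co t$ then $q_{B}\co t=t$'', and derive idempotency and both equalizer diagrams from it (the paper packages this as a split cofork, but the endgame is identical, including passing between the $\Pi_{H}^{L}$- and $\overline{\Pi}_{H}^{R}$-hypotheses via $\overline{\Pi}_{H}^{R}\co \Pi_{H}^{L}=\Pi_{H}^{L}$). The genuine gap is in your proposed proof of that central claim. After inserting the hypothesis once and rewriting $(B\ot \Pi_{H}^{L})\co \rho_{B}$ by (b4), the expression you must ``collapse to $t$'' is $\mu_{B}\co ((\mu_{B}\co c_{B,B}^{-1})\ot (h\co \lambda_{H}))\co (B\ot (\rho_{B}\co \eta_{B}))\co t$, which no longer involves $\rho_{B}\co t$, so the hypothesis cannot be used again: you would need this composite to equal $id_{B}$ globally, and it does not. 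Already for a groupoid algebra $H=kG$ (the most basic weak Hopf algebra, with $B=H$, $h=id_{H}$, trivial braiding) it sends an arrow $g$ to $\sum_{x}(id_{x}\, g)\, id_{x}$, which vanishes whenever the source and target of $g$ differ. Nor can the axioms (a4-4)--(a4-7) rescue the computation: they are identities in $H$, and since $h$ is only a total integral (neither multiplicative nor an anchor morphism in this proposition) there is no way to transport them to products in $B$.

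What is missing is the paper's key preliminary identity, which makes all telescoping unnecessary. Since $h$ is a comodule morphism with $h\co \eta_{H}=\eta_{B}$, one first gets $h\co \Pi_{H}^{R}=(B\ot (\varepsilon_{H}\co \mu_{H}))\co (c_{H,B}\ot H)\co (H\ot (\rho_{B}\co \eta_{B}))$ (this is (\ref{idemp-2})), and then the comodule-magma axiom (\ref{chmagma}) alone yields the recovery identity $\mu_{B}\co (B\ot (h\co \Pi_{H}^{R}))\co \rho_{B}=id_{B}$ (this is (\ref{idemp-3})); each side involves a single application of $\mu_{B}$, so no re-bracketing in $B$ ever occurs --- (\ref{chmagma}) is the real nonassociative workhorse here, not the antipode axioms. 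With (\ref{idemp-3}) in hand your claim follows in two lines: the hypothesis plus $\overline{\Pi}_{H}^{R}\co \Pi_{H}^{L}=\Pi_{H}^{L}$ gives $(B\ot \overline{\Pi}_{H}^{R})\co \rho_{B}\co t=\rho_{B}\co t$, whence $q_{B}\co t=\mu_{B}\co (B\ot (h\co \lambda_{H}\co \overline{\Pi}_{H}^{R}))\co \rho_{B}\co t=\mu_{B}\co (B\ot (h\co \Pi_{H}^{R}))\co \rho_{B}\co t=t$, using $\lambda_{H}\co \overline{\Pi}_{H}^{R}=\Pi_{H}^{R}$ ((40) of \cite{Asian}). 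Once the claim is repaired this way, the rest of your argument (idempotency as the claim applied to $t=q_{B}$, factorization and uniqueness through the split monomorphism $i_{B}$) goes through exactly as you wrote it. A minor further point: your proof of (\ref{idemp-0}) needs only (\ref{chmagma}), the comodule conditions, anticomultiplicativity of $\lambda_{H}$, coassociativity, and the idempotency of $\Pi_{H}^{L}$; Lemma \ref{monoid-hl} and conditions (b4), (b6) play no role there.
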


\begin{proof} 
First note that, by the naturality of $c$, the condition of right $H$-comodule morphism for $h$, and  the equality $h\co \eta_{H}=\eta_{B}$, we obtain that 
\begin{equation}
\label{idemp-2}
h\co \Pi_{H}^{R}=(B\ot (\varepsilon_{H}\co \mu_{H}))\co (c_{H,B}\ot H)\co (H\ot (\rho_{B}\co \eta_{B}))
\end{equation}
holds.  Then, as a consequence, by (\ref{chmagma}) and the properties of $\varepsilon_{H}$ and $\eta_{B}$, we have 
\begin{equation}
\label{idemp-3}
\mu_{B}\co (B\ot (h\co \Pi_{H}^{R}))\co \rho_{B}=id_{B}.
\end{equation}

Also, 
\begin{itemize}
\item[ ]$\hspace{0.38cm}\rho_{B}\co q_{B}$

\item[ ]$= \mu_{B\ot H}\co (\rho_{B}\ot (\rho_{B}\co h\co \lambda_{H}))\co \rho_{B}$
{\scriptsize ({\blue (\ref{chmagma})})}

\item[ ]$=(\mu_{B}\ot H)\co (B\ot ((h\ot \mu_{H})\co (c_{H,H}\ot H)\co (H\ot (\delta_{H}\co \lambda_{H}))\co \delta_{H}))\co \rho_{B}$ {\scriptsize ({\blue comodule condition for $B$ and }} 
\item[ ]$\hspace{0.38cm} $ {\scriptsize {\blue comodule morphism condition for $h$})}

\item[ ]$=(\mu_{B}\ot H)\co (B\ot ((h\ot \mu_{H})\co (c_{H,H}\ot H)\co (H\ot c_{H,H})\co (((H\ot \lambda_{H})\co \delta_{H})\ot \lambda_{H})\co \delta_{H}))\co \rho_{B}$ 
\item[ ]$\hspace{0.38cm} ${\scriptsize ({\blue  anticomultiplicativity of $\lambda_{H}$ and coassociativity of $\delta_{H}$})}

\item[ ]$=((\mu_{B}\co (B\ot h))\ot \Pi_{H}^{L})\co (B\ot (c_{H,H}\co (H\ot \lambda_{H})\co \delta_{H}))\co \rho_{B}$ {\scriptsize ({\blue naturality of $c$})},
\end{itemize}
 and then using that $\Pi_{H}^{L}$ is an idempotent morphism we obtain (\ref{idemp-0}). Therefore, by (34) of \cite{Asian}, we have (\ref{idemp-1}) because 
$$(B\ot \overline{\Pi}_{H}^{R})\co\rho_{B}\co q_{B}=(B\ot (\overline{\Pi}_{H}^{R}\co \Pi_{H}^{L}))\co\rho_{B}\co q_{B}=(B\ot \Pi_{H}^{L})\co\rho_{B}\co q_{B}=\rho_{B}\co q_{B}.$$

Then, $q_{B}$ is an idempotent morphism. Indeed, 
\begin{itemize}
\item[ ]$\hspace{0.38cm} q_{B}\co q_{B}$

\item[ ]$= \mu_{H} \co (B\ot (h\co \lambda_{H}\co \overline{\Pi}_{H}^{R}) )\co \rho_{B}\co q_{B}${\scriptsize ({\blue (\ref{idemp-1})})}

\item[ ]$=\mu_{B} \co (B\ot  (h\co \Pi_{H}^{R}))\co \rho_{B}\co q_{B}$ {\scriptsize  ({\blue (40) of \cite{Asian}})}

\item[ ]$=q_{B}$ {\scriptsize  ({\blue (\ref{idemp-3})})}.

\end{itemize}

On the other hand, by (40)  of \cite{Asian} and (\ref{idemp-3}), 
$$\mu_{B} \co (B\ot (h\co \lambda_{H}\co \overline{\Pi}_{H}^{R}) )\co \rho_{B}=\mu_{B} \co (B\ot (h\co \Pi_{H}^{R})) )\co \rho_{B}=id_{B}.$$

Then, 
$$
\setlength{\unitlength}{3mm}
\begin{picture}(30,4)
\put(3,2){\vector(1,0){4}} \put(11,2.5){\vector(1,0){10}}
\put(11,1.5){\vector(1,0){10}} \put(1,2){\makebox(0,0){$B^{co H}$}}
\put(9,2){\makebox(0,0){$B$}} \put(24,2){\makebox(0,0){$B\ot H$}}
\put(5.5,3){\makebox(0,0){$i_{B}$}}
\put(16,3.5){\makebox(0,0){$ \rho_{B}$}}
\put(16,0.15){\makebox(0,0){$(B\ot \overline{\Pi}_{H}^{R})\co\rho_{B}$}}
\end{picture}
$$
is a split cofork  (see \cite{Mac}) and thus an equalizer diagram. As a consequence, by (33) of \cite{Asian}, we have 
$$(B\ot \Pi_{H}^{L})\co\rho_{B}\co q_{B}=(B\ot (\Pi_{H}^{L}\co \overline{\Pi}_{H}^{R}))\co\rho_{B}\co q_{B}=(B\ot \overline{\Pi}_{H}^{R})\co\rho_{B}\co q_{B}=\rho_{B}\co q_{B}$$
and  $q_{B}$ equalizes $\rho_{B}$ and $(B\ot \Pi_{H}^{L})\co\rho_{B}$. Therefore,  $i_{B}$ equalizes $\rho_{B}$ and $(B\ot \Pi_{H}^{L})\co\rho_{B}$ because $p_{B}\co i_{B}=id_{B^{coH}}$. Moreover, if $t:D\rightarrow B$ is such that $(B\ot \Pi_{H}^{L})\co\rho_{B}\co t=\rho_{B}\co t$, composing with $B\ot \overline{\Pi}_{H}^{R}$, we obtain that $(B\ot \overline{\Pi}_{H}^{R})\co\rho_{B}\co t=\rho_{B}\co t$. Thus, there exists a unique morphism $t^{\prime}:D\rightarrow B^{coH}$ such that $i_{B}\co t^{\prime}=t$. Then,    
$$
\setlength{\unitlength}{3mm}
\begin{picture}(30,4)
\put(3,2){\vector(1,0){4}} \put(11,2.5){\vector(1,0){10}}
\put(11,1.5){\vector(1,0){10}} \put(1,2){\makebox(0,0){$B^{co H}$}}
\put(9,2){\makebox(0,0){$B$}} \put(24,2){\makebox(0,0){$B\ot H$}}
\put(5.5,3){\makebox(0,0){$i_{B}$}}
\put(16,3.5){\makebox(0,0){$ \rho_{B}$}}
\put(16,0.15){\makebox(0,0){$(B\ot \Pi_{H}^{L})\co\rho_{B}$}}
\end{picture}
$$
is an equalizer diagram.

\end{proof}

\begin{remark}
Note that, under the conditions of the previous proposition, the object of coinvariants is independent of the total integral $h$ because it is the equalizer object of $\rho_{B}$ and $(B\ot \overline{\Pi}_{H}^{R})\co\rho_{B}$ (or equivalently, $\rho_{B}$ and $(B\ot \Pi_{H}^{L})\co\rho_{B}$).

Moreover, (see \cite{JPAA}) the triple $(B^{co H}, \eta_{B^{co H}}, \mu_{B^{co H}})$ is a unital magma (the  submagma of coinvariants of $B$), where $\eta_{B^{co H}}:K\rightarrow B^{co H}$, $\mu_{B^{co H}}:B^{co H}\ot B^{co H}\rightarrow B^{co H}$ are the factorizations through $i_{B}$ of the morphisms $\eta_B$ and $\mu_B\co (i_B\ot i_B)$, respectively. Therefore, $\eta_{B^{co H}}$ is the unique morphism such that 
\begin{equation}
\label{etaH}
\eta_{B}=i_{B}\co \eta_{B^{co H}},
\end{equation}
and $\mu_{B^{co H}}$ is the unique morphism satisfying 
\begin{equation}
\label{muH}
\mu_{B}\co (i_{B}\ot i_{B})=i_{B}\co \mu_{B^{co H}}.
\end{equation}

Thus, 
\begin{equation}
\label{etaH1}
\eta_{B^{co H}}=p_{B}\co \eta_{B},
\end{equation}
and 
\begin{equation}
\label{muH1}
\mu_{B^{co H}}=p_{B}\co \mu_{B}\co (i_{B}\ot i_{B}).
\end{equation}

In what follows, the object of coinvariants $B^{co H}$ will be called the submagma of coinvariants of $B$. 
Note that, if $B=H$ and $\rho_{B}=\delta_{H}$, the submagma of coinvariants is $H^{coH}=H_{L}$ and then, in this case, it is a monoid.

\end{remark}

\begin{definition}
\label{anchor}
Let $H$ be a weak Hopf quasigroup and let $(B,\rho_{B})$ be a right $H$-comodule
magma. We will say that $h:H\rightarrow B$ is an anchor morphism if it is a multiplicative total integral (i.e., a right $H$-comodule morphism such that it is a morphism of unital magmas) and the following equalities hold:
\begin{itemize}
\item[(c1)]$\mu_{B}\co ((\mu_{B}\co (B\ot h))\ot (h\co \lambda_{H}))\co (B\ot \delta_{H})=\mu_{B}\co (B\ot (h\co \Pi_{H}^{L})).$
\item[(c2)]$\mu_{B}\co ((\mu_{B}\co (B\ot (h\co \lambda_{H})))\ot h)\co (B\ot \delta_{H})=\mu_{B}\co (B\ot (h\co \Pi_{H}^{R})). $
\end{itemize}

Note that, if the product on $B$ is associative, every multiplicative total integral $h$ satisfies (c1)-(c2) and therefore is an anchor morphism. Also, using that $h$ is a comodule morphism, the condition (c1) can be rewritten as 
\begin{equation}
\label{re-c1}
\mu_{B}\co (\mu_{B}\ot (h\co \lambda_{H}))\co (B\ot (\rho_{B}\co h))=\mu_{B}\co (B\ot (h\co \Pi_{H}^{L})).
\end{equation}

Finally, if $H$ is a Hopf quasigroup, (c1) and (c2) are 
$$\mu_{B}\co ((\mu_{B}\co (B\ot h))\ot (h\co \lambda_{H}))\co (B\ot \delta_{H})=B\ot \varepsilon_{H}=\mu_{B}\co ((\mu_{B}\co (B\ot (h\co \lambda_{H})))\ot h)\co (B\ot \delta_{H}), $$
or, in an equivalent way, $B$ is a cleft right $H$-comodule algebra in the sense of Definition 3.1 of \cite{our2}.

\begin{example}
\label{anch-ex}
{\rm 
1. By the definition of weak Hopf quasigroup and (40) of \cite{Asian}, the identity morphism $id_{H}$  is an anchor morphism for the right $H$-comodule magma $(H,\delta_{H})$. Also, if $H$ is a Hopf quasigrup, the identity of $H$ is an anchor morphism.

2. By the second point of Example \ref{ex-hcm} we know that, if $H$ is a cocommutative weak Hopf quasigroup and ${\mathcal C}$ is symmetric, $(H^{op}, \rho_{H^{op}}=(H\ot \lambda_{H})\co \delta_{H})$ is an example of right $H$-comodule magma. Note that in this case we have that $\lambda_{H}\co \eta_{H}=\eta_{H}$ and 

\begin{itemize}
\item[ ]$\hspace{0.38cm} \rho_{H^{op}}\co \lambda_{H}$

\item[ ]$= (\lambda_{H}\ot \lambda_{H}^2)\co \delta_{H} ${\scriptsize ({\blue (\ref{2ex})})}

\item[ ]$=(\lambda_{H}\ot H)\co \delta_{H} $ {\scriptsize  ({\blue Theorem 3.22 of \cite{Asian}, i.e., $\lambda_{H}^2=id_{H}$})}

\end{itemize} 

Therefore $\lambda_{H}$ is a total integral. Moreover, by (52) of \cite{Asian}, we obtain that 
$\mu_{H^{op}}\co (\lambda_{H}\ot \lambda_{H})=\lambda_{H}\co \mu_{H}$. Finally, 

\begin{itemize}
\item[ ]$\hspace{0.38cm} \mu_{H^{op}}\co ((\mu_{H^{op}}\co (H\ot \lambda_{H}))\ot (\lambda_{H}\co \lambda_{H}))\co (H\ot \delta_{H})$

\item[ ]$=\mu_{H}\co (H\ot \mu_{H})\co (c_{H,H}\ot H)\co (H\ot c_{H,H})\co  (c_{H,H}\ot H)\co (H\ot c_{H,H}) \co (H\ot ((H\ot \lambda_{H})\co \delta_{H}))$
\item[ ]$\hspace{0.38cm}${\scriptsize ({\blue Theorem 3.22 of \cite{Asian}, naturality of $c$ and coassociativity of $\delta_{H}$})}

\item[ ]$=\mu_H\co (H\ot \mu_H)\co (H\ot \lambda_H\ot H)\co (\delta_H\ot H)\co c_{H,H} $ {\scriptsize  ({\blue naturality of $c$ and  $c^2=id_{H}$})}

\item[ ]$=\mu_{H}\co (\Pi_{H}^{L}\ot H)\co c_{H,H} $ {\scriptsize  ({\blue (a4-5) of Definition \ref{W-H-quasi}})}

\item[ ]$= \mu_{H^{op}}\co (H\ot \Pi_{H}^{L}) ${\scriptsize ({\blue naturality of $c$})}

\item[ ]$=\mu_{H^{op}}\co (H\ot (\lambda_{H}\co \overline{\Pi}_{H}^{L}))$ {\scriptsize  ({\blue (39) of \cite{Asian}})}

\item[ ]$=\mu_{H^{op}}\co (H\ot (\lambda_{H}\co \Pi_{H}^{L}))$  {\scriptsize  ({\blue $H$ is cocommutative, $\Pi_{H}^{L}=\overline{\Pi}_{H}^{L}$})}, 

\end{itemize} 
and, by similar arguments, 
$$\mu_{H^{op}}\co ((\mu_{H^{op}}\co (H\ot (\lambda_{H}\co \lambda_{H})))\ot \lambda_{H})\co (H\ot \delta_{H})=\mu_{H^{op}}\co (H\ot (\lambda_{H}\co \Pi_{H}^{R})). $$

Therefore, $\lambda_{H}$ is an anchor morphism for $(H^{op}, \rho_{H^{op}})$.  Of course, the same result holds for cocommutative Hopf quasigroups.

3. In the third point of Example \ref{ex-hcm} we saw that if $H$ is a Hopf quasigroup, $A$ is a unital magma in ${\mathcal C}$, and there  exists a morphism $\varphi_A:H\ot A\to A$ satisfying (\ref{et1}), (\ref{et2}), the smash product $A\sharp H$ is a right $H$-comodule magma with coaction $\rho_{A\sharp H}=A\ot \delta_{H}$. By Example 3.1 of \cite{our2}, we have that $h=\eta_{A}\ot H:H\rightarrow A\sharp H$  is a total integral. On the other hand, 

\begin{itemize}
\item[ ]$\hspace{0.38cm} \mu_{A\sharp H}\co (h\ot h)$

\item[ ]$= ((\varphi_{A}\co (H\ot \eta_{A}))\ot \mu_{H})\co (\delta_{H}\ot H)${\scriptsize ({\blue unit properties and naturality of $c$})}

\item[ ]$= h\co \mu_{H}$ {\scriptsize  ({\blue (\ref{et2}) and counit properties})}

\end{itemize}
and then $h$ is multiplicative. Also, by similar arguments, we have 
\begin{equation}
\label{nwx}
\mu_{A\sharp H}\co (A\ot H\ot h)=A\ot \mu_{H}.
\end{equation}

Thus, by (\ref{nwx}) and (\ref{rightHqg}), the identities  
$$\mu_{A\sharp H}\co ((\mu_{A\sharp H}\co (A\ot H\ot h))\ot (h\co \lambda_{H}))\co (A\ot H\ot \delta_{H})=A\ot 
(\mu_H\circ(\mu_H\ot \lambda_H)\circ (H\ot \delta_H))=A\ot H\ot \varepsilon_{H},$$
hold. Similarly, by (\ref{nwx}) and (\ref{leftHqg}) we obtain that 
$$\mu_{A\sharp H}\co ((\mu_{A\sharp H}\co (A\ot H\ot (h\co \lambda_{H})))\ot h)\co (A\ot H\ot \delta_{H})=A\ot H\ot \varepsilon_{H}.$$

Therefore, $h=\eta_{A}\ot H$ is an anchor morphism.

4.  Assume that $H$ and $B$ are Hopf quasigrous in ${\mathcal C}$. Let $g:B\to H$, $f:H\to B$ be  morphisms of Hopf quasigroups such that $g\co f=id_H$. Consider the right $H$-comodule structure on $B$ defined in fourth point of Example \ref{ex-hcm}. Then, $f$ is an anchor morphism. Indeed, first note that $f\co \eta_{H}=\eta_{B}$, $f\co \mu_{H}=\mu_{B}\co (f\ot f)$ hold because $f$ is a morphism of unital magmas. Also, $f$ is a comodule morphism, i.e.,  
$\rho_{B}\co f=(f\ot H)\co \delta_{H}$, because $f$ is a comonoid morphism and $g\co f=id_H$. On the other hand, by Lemma 1.4 of \cite{our1}, we have that 
\begin{equation}
\label{elambda}
\lambda_{B}\co f=f\co \lambda_{H}
\end{equation}
holds, and then, using that $f$ is a comonoid morphism, we get 
$$\mu_{B}\co ((\mu_{B}\co (B\ot f))\ot (f\co \lambda_{H}))\co (B\ot \delta_{H})=\mu_{B}\co ((\mu_{B}\co (B\ot f))\ot (\lambda_{B}\co f))\co (B\ot \delta_{H})$$
$$=\mu_B\circ(\mu_B\ot \lambda_B)\circ (B\ot (\delta_B\co f))=B\ot (\varepsilon_{B}\co f)=B\ot \varepsilon_{H}$$

Similarly, by the same arguments,  
$$\mu_{B}\co ((\mu_{B}\co (B\ot (f\co \lambda_{H})))\ot f)\co (B\ot \delta_{H})=B\ot \varepsilon_{H},$$
and this implies that $f$ is an anchor morphism. 

As a consequence, the examples of strong projections that we can find in \cite{our1} and \cite{cocy} provide examples of anchor morphisms. 

}
\end{example}

If $(B,\rho_{B})$ is a right $H$-comodule monoid the following identity 
\begin{equation}
\label{Eti-1}
(B\ot \overline{\Pi}_{H}^{R})\co \rho_{B}\co \mu_{B}=(\mu_{B}\ot \overline{\Pi}_{H}^{R})\co (B\ot \rho_{B}).
\end{equation}
holds. Indeed:

\begin{itemize}
\item[ ]$\hspace{0.38cm}(B\ot \overline{\Pi}_{H}^{R})\co \rho_{B}\co \mu_{B}$

\item[ ]$=(\mu_{B}\ot (\varepsilon_{H}\co \mu_{H}\co (\mu_{H}\ot H))\ot H)\co (B\ot c_{H,B}\ot H\ot (\delta_{H}\co \eta_{H}))\co (\rho_{B}\ot \rho_{B}) ${\scriptsize  ({\blue (\ref{chmagma}) and definition of $\overline{\Pi}_{H}^{R}$})}

\item[ ]$=(\mu_{B}\ot (((\varepsilon_{H}\co \mu_{H})\ot (\varepsilon_{H}\co \mu_{H}))\co (H\ot \delta_{H}\ot H))\ot H)\co (B\ot c_{H,B}\ot H\ot (\delta_{H}\co \eta_{H}))\co (\rho_{B}\ot \rho_{B}) $
\item[ ]$\hspace{0.38cm}${\scriptsize ({\blue (a2) of Definition \ref{W-H-quasi}})}

\item[ ]$= (B\ot \varepsilon_{H} \ot \varepsilon_{H}\ot H)\co (\mu_{B\ot H}\ot \mu_{H}\ot H)\co (\rho_{B}\ot \rho_{B}\ot H\ot (\delta_{H}\co \eta_{H}))\co (B\ot \rho_{B})$ {\scriptsize  ({\blue comodule condition }}
\item[ ]$\hspace{0.38cm}${\scriptsize {\blue for $B$})}

\item[ ]$=(((B\ot \varepsilon_{H})\co \rho_{B}\co \mu_{B})\ot \overline{\Pi}_{H}^{R}))\co (B\ot \rho_{B}) $ {\scriptsize  ({\blue (\ref{chmagma}) and definition of $\overline{\Pi}_{H}^{R}$})}

\item[ ]$=(\mu_{B}\ot \overline{\Pi}_{H}^{R})\co (B\ot \rho_{B})$ {\scriptsize  ({\blue comodule condition for $B$}).}

\end{itemize}

As a consequence, if $h:H\rightarrow B$ is a total integral, the  equality
\begin{equation}
\label{c3}
\mu_{B}\co (\mu_{B}\ot (h\co \Pi_{H}^{R}))\co (B\ot \rho_{B})=\mu_{B}
\end{equation}
holds because 
\begin{itemize}
\item[ ]$\hspace{0.38cm}\mu_{B}\co (\mu_{B}\ot (h\co \Pi_{H}^{R}))\co (B\ot \rho_{B})$

\item[ ]$=\mu_{B}\co (\mu_{B}\ot (h\co \lambda_{H}\co \overline{\Pi}_{H}^{R}))\co (B\ot \rho_{B})$ {\scriptsize  ({\blue (40) of \cite{Asian}})}

\item[ ]$= \mu_{B}\co (\mu_{B}\ot (h\co \lambda_{H}\co \overline{\Pi}_{H}^{R}))\co \rho_{B}\co \mu_{B}$ {\scriptsize  ({\blue (\ref{Eti-1})})}

\item[ ]$= \mu_{B}\co (\mu_{B}\ot (h\co \Pi_{H}^{R}))\co \rho_{B}\co \mu_{B}$ {\scriptsize  ({\blue (40) of \cite{Asian}})}

\item[ ]$=\mu_{B}$ {\scriptsize  ({\blue (\ref{idemp-3})}).}

\end{itemize}

Also, if $h:H\rightarrow B$ is a multiplicative morphism of right $H$-comodules, the following identity holds
\begin{equation}
\label{anchor1}
h\co \Pi_{H}^{L}=q_{B}\co h, 
\end{equation}
because 
\begin{itemize}
\item[ ]$\hspace{0.38cm}h\co \Pi_{H}^{L}$

\item[ ]$= \mu_{B}\co (h\ot (h\co \lambda_{H}))\co \delta_{H}$ {\scriptsize  ({\blue $h$ is multiplicative})}

\item[ ]$=q_{B}\co h$ {\scriptsize  ({\blue $h$ is a comodule morphism}).}

\end{itemize}

If $h:H\rightarrow B$ is an anchor morphism we have that the equality
\begin{equation}
\label{c4}
\mu_{B}\co ((\mu_{B}\co (B\ot (h\co \Pi_{H}^{L})))\ot h)\co (B\ot \delta_{H})=\mu_{B}\co (B\ot h)
\end{equation}
holds. Indeed:
\begin{itemize}
\item[ ]$\hspace{0.38cm} \mu_{B}\co ((\mu_{B}\co (B\ot (h\co \Pi_{H}^{L})))\ot h)\co (B\ot \delta_{H})$

\item[ ]$= \mu_{B}\co ((\mu_{B}\co (\mu_{B}\ot (h\co \lambda_{H}))\co (B\ot \rho_{B}))\ot h)\co (B\ot (\rho_{B}\co h))$ {\scriptsize  ({\blue (\ref{re-c1}) and  comodule morphism condition }}
\item[ ]$\hspace{0.38cm}${\scriptsize {\blue (\ref{re-c1})  for $h$})}

\item[ ]$=\mu_{B}\co ((\mu_{B}\co (B\ot (h\co \lambda_{H})))\ot h)\co (B\ot \delta_{H}))\co (\mu_{B}\ot H)\co (B\ot (\rho_{B}\co h))$ {\scriptsize ({\blue comodule condition for $B$})}

\item[ ]$=\mu_{B}\co ((\mu_{B}\co (h\co  \Pi_{H}^{R}))\co (B\ot (\rho_{B}\co h))$ {\scriptsize  ({\blue  (c2) of Definition \ref{anchor}})}

\item[ ]$=\mu_{B}\co (B\ot h) $ {\scriptsize  ({\blue (\ref{c3})}).}

\end{itemize}

Then, by (\ref{anchor1}) and the comodule morphism condition for $h$, (\ref{c4}) is equivalent to 
\begin{equation}
\label{anchor2}
\mu_{B}\co ((\mu_{B}\co (B\ot q_{B}))\ot h)\co (B\ot (\rho_{B}\co h))=\mu_{B}\co (B\ot h). 
\end{equation}

Finally, for an anchor morphism $h:H\rightarrow B$ the equality 
\begin{equation}
\label{anchor21}
\mu_{B}\co (q_{B}\ot h)\co \rho_{B}=id_{B}  
\end{equation}
holds, because 
\begin{itemize}
\item[ ]$\hspace{0.38cm}\mu_{B}\co (q_{B}\ot h)\co \rho_{B}$

\item[ ]$=\mu_{B}\co ((\mu_{B}\co (B\ot (h\co \lambda_{H})))\ot h)\co (B\ot \delta_{H})\co \rho_{B} $ {\scriptsize  ({\blue comodule condition for $B$})}

\item[ ]$=\mu_{B}\co (B\ot (h\co \Pi_{H}^{R}))\co \rho_{B} $ {\scriptsize  ({\blue (c2) of Definition \ref{anchor}})}

\item[ ]$= id_{B}$ {\scriptsize  ({\blue (\ref{idemp-3})}).}

\end{itemize}

\end{definition}

\begin{definition}
\label{H-D-mod}
Let $H$ be a weak Hopf quasigroup and let $(B,\rho_{B})$ be a right $H$-comodule
magma. Let $h:H\rightarrow B$ be an anchor morphism and let $M$ be an object  in ${\mathcal
C}$. We say that $(M, \phi_{M}, \rho_{M})$ is a  strong $(H,B,h)$-Hopf module if the following axioms hold:
\begin{itemize}
\item[(d1)] The pair $(M, \rho_{M})$ is a right $H$-comodule.
\item[(d2)] The morphism $\phi_{M}:M\ot B\rightarrow M$ satisfies:
\begin{itemize}
\item[(d2-1)] $\phi_{M}\co (M\ot \eta_{B})=id_{M}.$
\item[(d2-2)] $\phi_{M}\co ((\phi_{M}\co (M\ot i_{B}))\ot B)=\phi_{M}\co (M\ot (\mu_{B}\co (i_{B}\ot B))).$
\item[(d2-3)] $\rho_{M}\co \phi_{M}=(\phi_{M}\ot \mu_{H})\co (M\ot c_{H,B}\ot H)\co (\rho_{M}\ot \rho_{B})$.
\item[(d2-4)] $\phi_{M}\circ ((\phi_{M}\co (M\ot h))\ot (h\co\lambda_H))\circ (M\ot \delta_{H})=\phi_{M}\co (M\ot (h\co \Pi_{H}^{L})).$
\item[(d2-5)] $\phi_{M}\circ ((\phi_{M}\co (M\ot (h\co \lambda_{H})))\ot h)\circ (M\ot \delta_{H})=\phi_{M}\co (M\ot (h\co \Pi_{H}^{R})).$

\end{itemize}
\end{itemize}

For example, the triple $(H, \mu_{H}, \delta_{H})$ is a strong $(H,H,id_{H})$-Hopf module. Also, if the following equality 
\begin{equation}
\label{strong-1}
\mu_{B}\co ((\mu_{B}\co (B\ot i_{B}))\ot B)=\mu_{B}\co (B\ot (\mu_{B}\co (i_{B}\ot B)))
\end{equation}
holds, the triple $(B, \mu_{B}, \rho_{B})$ is a strong $(H,B, h)$-Hopf module. 

Let $(M, \phi_{M}, \rho_{M})$ be a strong $(H,B,h)$-Hopf module.  In a similar way to (\ref{Eti-1}), but using (d2-3) of Definition \ref{H-D-mod} instead of (\ref{chmagma}), it is easy to see that 
\begin{equation}
\label{Eti-1-1}
(M\ot \overline{\Pi}_{H}^{R})\co \rho_{M}\co \phi_{M}=(\phi_{M}\ot \overline{\Pi}_{H}^{R})\co (M\ot \rho_{M})
\end{equation}
holds. Moreover,  by  (\ref{idemp-2}), (d2-3), (d2-1), and the condition of comodule for $M$, we also have the equality
\begin{equation}
\label{idemp-3M}
\phi_{M}\co (M\ot (h\co \Pi_{H}^{R}))\co \rho_{M}=id_{M}.
\end{equation}

 As a consequence, we can obtain the Hopf module version of (\ref{c3}), i.e., 
\begin{equation}
\label{d2-6}
\phi_{M}\circ (\phi_{M}\ot (h\co\Pi_{H}^{R}))\circ (M\ot \rho_{B})=\phi_{M}.
\end{equation}

Moreover, in this setting, the  equality 
\begin{equation}
\label{d2-7}\phi_{M}\circ ((\phi_{M}\co (M\ot (h\co \Pi_{H}^{L})))\ot h)\circ (M\ot \delta_{H})=\phi_{M}\co (M\ot h)
\end{equation}
holds because 
\begin{itemize}
\item[ ]$\hspace{0.38cm}\phi_{M}\circ ((\phi_{M}\co (M\ot (h\co \Pi_{H}^{L})))\ot h)\circ (M\ot \delta_{H}) $

\item[ ]$=\phi_{M}\circ ((\phi_{M}\co (M\ot (q_{B}\co h)))\ot h)\circ (M\ot \delta_{H})  $ {\scriptsize  ({\blue  (\ref{anchor1})})}

\item[ ]$= \phi_{M}\circ (M\ot (\mu_{B}\co  ((q_{B}\co h)\ot h)\co \delta_{H}))$ {\scriptsize  ({\blue  (d2-2) of Definition \ref{H-D-mod}})}

\item[ ]$=\phi_{M}\circ (M\ot (\mu_{B}\co  ((h\co \Pi_{H}^{L})\ot h)\co \delta_{H})) $ {\scriptsize  ({\blue (\ref{anchor1})})}

\item[ ]$=\phi_{M}\circ (M\ot (h\co (\Pi_{H}^{L}\ast id_{H})))  $ {\scriptsize  ({\blue $h$ is multiplicative})}

\item[ ]$=\phi_{M}\co (M\ot h) $ {\scriptsize  ({\blue (4) of \cite{Asian}}).}

\end{itemize}

Finally,  by (\ref{anchor1}) the equality (\ref{d2-7}) is equivalent to
\begin{equation} 
\label{anchor3}
\phi_{M}\circ ((\phi_{M}\co (M\ot q_{B}))\ot h)\circ (M\ot (\rho_{B}\co h))=\phi_{M}\co (M\ot h).
\end{equation}

\end{definition}

\begin{proposition}
\label{idem-M}
Let $H$ be a weak Hopf quasigroup and let $(B,\rho_{B})$ be a right $H$-comodule
magma. Let $h:H\rightarrow B$ be an anchor morphism and let $(M, \phi_{M}, \rho_{M})$ be a strong $(H,B,h)$-Hopf module. The endomorphism $q_{M}:=\phi_{M}\co (M\ot (h\co \lambda_{H}))\co \rho_{M}:M\rightarrow M$  satisfies 
\begin{equation}
\label{idemp-0M}
\rho_{M}\co q_{M}=(M\ot \Pi_{H}^{L})\co\rho_{M}\co q_{M},
\end{equation}
\begin{equation}
\label{idemp-1M}
\rho_{M}\co q_{M}=(M\ot \overline{\Pi}_{H}^{R})\co\rho_{M}\co q_{M},
\end{equation}
and, as a consequence, $q_{M}$ is an idempotent. Moreover, if $M^{co H}$ (object of coinvariants)  is the image of $q_{M}$ and $p_{M}:M\rightarrow M^{co H}$, $i_{M}:M^{co H}\rightarrow M$ are the morphisms such that $q_{M}=i_{M}\co p_{M}$ and 
 $id_{M^{co H}}=p_{M}\co i_{M}$, 
$$
\setlength{\unitlength}{3mm}
\begin{picture}(30,4)
\put(3,2){\vector(1,0){4}} \put(11,2.5){\vector(1,0){10}}
\put(11,1.5){\vector(1,0){10}} \put(1,2){\makebox(0,0){$M^{co H}$}}
\put(9,2){\makebox(0,0){$M$}} \put(24,2){\makebox(0,0){$M\ot H,$}}
\put(5.5,3){\makebox(0,0){$i_{M}$}}
\put(16,3.5){\makebox(0,0){$ \rho_{M}$}}
\put(16,0.15){\makebox(0,0){$(M\ot \overline{\Pi}_{H}^{R})\co\rho_{M}$}}
\end{picture}
$$ 
$$
\setlength{\unitlength}{3mm}
\begin{picture}(30,4)
\put(3,2){\vector(1,0){4}} \put(11,2.5){\vector(1,0){10}}
\put(11,1.5){\vector(1,0){10}} \put(1,2){\makebox(0,0){$M^{co H}$}}
\put(9,2){\makebox(0,0){$M$}} \put(24,2){\makebox(0,0){$M\ot H,$}}
\put(5.5,3){\makebox(0,0){$i_{M}$}}
\put(16,3.25){\makebox(0,0){$ \rho_{M}$}}
\put(16,0.15){\makebox(0,0){$(M\ot \Pi_{H}^{L})\co\rho_{M}$}}
\end{picture}
$$
are equalizer diagrams. 
\end{proposition}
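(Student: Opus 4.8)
The statement is the action-side counterpart of Proposition \ref{idem-B}, and the plan is to follow that proof line by line under the dictionary that replaces the magma product $\mu_{B}$ by the action $\phi_{M}$ and the structural identity (\ref{chmagma}) by axiom (d2-3) of Definition \ref{H-D-mod}. The one nontrivial input needed, namely the analogue $\phi_{M}\co (M\ot (h\co \Pi_{H}^{R}))\co \rho_{M}=id_{M}$ of (\ref{idemp-3}), has already been recorded as (\ref{idemp-3M}) in the block preceding the statement, so no structural fact beyond the definitions is required.

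First I would prove (\ref{idemp-0M}) by computing $\rho_{M}\co q_{M}=\rho_{M}\co \phi_{M}\co (M\ot (h\co \lambda_{H}))\co \rho_{M}$. Applying (d2-3) rewrites this as $(\phi_{M}\ot \mu_{H})\co (M\ot c_{H,B}\ot H)\co (\rho_{M}\ot (\rho_{B}\co h\co \lambda_{H}))\co \rho_{M}$. Using the comodule condition for $M$ together with the comodule-morphism condition $\rho_{B}\co h=(h\ot H)\co \delta_{H}$, then the anticomultiplicativity of $\lambda_{H}$ and the coassociativity of $\delta_{H}$, and finally the naturality of $c$, the $H$-leg reorganizes so that $\mu_{H}\co (H\ot \lambda_{H})\co \delta_{H}=\Pi_{H}^{L}$ is applied to it, yielding an expression of the form $((\phi_{M}\co (M\ot h))\ot \Pi_{H}^{L})\co (M\ot (c_{H,H}\co (H\ot \lambda_{H})\co \delta_{H}))\co \rho_{M}$; idempotency of $\Pi_{H}^{L}$ then gives (\ref{idemp-0M}). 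This computation, with its braiding bookkeeping, is the only substantive step and the main obstacle.

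The remaining assertions are formal. From (\ref{idemp-0M}) and $\overline{\Pi}_{H}^{R}\co \Pi_{H}^{L}=\Pi_{H}^{L}$ ((34) of \cite{Asian}) one gets $(M\ot \overline{\Pi}_{H}^{R})\co\rho_{M}\co q_{M}=(M\ot \Pi_{H}^{L})\co\rho_{M}\co q_{M}=\rho_{M}\co q_{M}$, which is (\ref{idemp-1M}). Idempotency then follows at once: by (\ref{idemp-1M}), $q_{M}\co q_{M}=\phi_{M}\co (M\ot (h\co \lambda_{H}\co \overline{\Pi}_{H}^{R}))\co \rho_{M}\co q_{M}$, which equals $\phi_{M}\co (M\ot (h\co \Pi_{H}^{R}))\co \rho_{M}\co q_{M}$ by $\lambda_{H}\co \overline{\Pi}_{H}^{R}=\Pi_{H}^{R}$ ((40) of \cite{Asian}), and this is $q_{M}$ by (\ref{idemp-3M}).

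For the equalizer diagrams I would note that (\ref{idemp-3M}) together with $\lambda_{H}\co \overline{\Pi}_{H}^{R}=\Pi_{H}^{R}$ gives $\phi_{M}\co (M\ot (h\co \lambda_{H}\co \overline{\Pi}_{H}^{R}))\co \rho_{M}=id_{M}$; with $q_{M}=i_{M}\co p_{M}$ and $p_{M}\co i_{M}=id_{M^{co H}}$ this exhibits $\rho_{M}$, $(M\ot \overline{\Pi}_{H}^{R})\co\rho_{M}$ and $i_{M}$ as a split cofork in the sense of \cite{Mac}, hence an equalizer. The $\Pi_{H}^{L}$ version follows because $\Pi_{H}^{L}\co \overline{\Pi}_{H}^{R}=\overline{\Pi}_{H}^{R}$ ((33) of \cite{Asian}) shows that $q_{M}$, and therefore $i_{M}$, also equalizes $\rho_{M}$ and $(M\ot \Pi_{H}^{L})\co\rho_{M}$; the universal property is verified by composing an arbitrary test morphism with $M\ot \overline{\Pi}_{H}^{R}$ to reduce to the first equalizer, exactly as in Proposition \ref{idem-B}.
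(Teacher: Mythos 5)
Your proposal is correct and is exactly the argument the paper intends: the paper's own proof of Proposition \ref{idem-M} is a one-line remark that one repeats the proof of Proposition \ref{idem-B} with (d2-3) in place of (\ref{chmagma}) and the comodule condition for $M$ in place of that for $B$, which is precisely the dictionary you apply, down to the same citations of (33), (34) and (40) of \cite{Asian}, the analogue (\ref{idemp-3M}) of (\ref{idemp-3}), and the split-cofork argument for the equalizers.
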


\begin{proof}
The proof is similar to the one developed in Proposition \ref{idem-B} but using (d2-3) instead of (\ref{chmagma}) and the comodule condition for $M$ instead of the comodule condition for $B$. 
\end{proof}

\begin{remark}
Note that, as in the case of $B$, the object of coinvariants $M^{coH}$ is independent of the anchor morphism $h$ because it is the equalizer object of $\rho_{M}$ and $(M\ot \overline{\Pi}_{H}^{R})\co\rho_{M}$ (or $\rho_{M}$ and $(M\ot \Pi_{H}^{L})\co\rho_{M}$).
\end{remark}

\begin{lemma}
\label{l-idem}
Let $H$ be a weak Hopf quasigroup and let $(B,\rho_{B})$ be a right $H$-comodule
magma. Let $h:H\rightarrow B$ be an anchor morphism and let $(M, \phi_{M}, \rho_{M})$ be a  strong $(H,B,h)$-Hopf module. The following equalities hold:
\begin{equation}
\label{l-idem0}
\rho_{M}\co \phi_{M}\co (i_{M}\ot B)=(\phi_{M}\ot H)\co (i_{M}\ot \rho_{B}),
\end{equation}
\begin{equation}
\label{l-idem2}
q_{M}\co \phi_{M}\co (i_{M}\ot B)=\phi_{M}\co (i_{M}\ot q_{B}),
\end{equation}
\begin{equation}
\label{l-idem1}
(q_{M}\ot H)\co \rho_{M}\co \phi_{M}\co (i_{M}\ot B)=((\phi_{M}\co (M\ot q_{B}))\ot H)\co (i_{M}\ot \rho_{B}),
\end{equation}
\begin{equation}
\label{l-idem1-1}
(p_{M}\ot H)\co \rho_{M}\co \phi_{M}\co (i_{M}\ot B)=((p_{M}\co\phi_{M}\co (M\ot q_{B}))\ot H)\co (i_{M}\ot \rho_{B}),
\end{equation}
\begin{equation}
\label{l-idem2-1}
q_{M}\co \phi_{M}\co (i_{M}\ot i_{B})=\phi_{M}\co (i_{M}\ot i_{B}),
\end{equation}
\begin{equation}
\label{l-idem3}
p_{M}\co \phi_{M}\co (i_{M}\ot B)=p_{M}\co \phi_{M}\co (i_{M}\ot q_{B}).
\end{equation}
\begin{equation}
\label{l-idem4}
\phi_{M}\co (q_{M}\ot h)\co \rho_{M}=id_{M}.
\end{equation}

\end{lemma}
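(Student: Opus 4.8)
The plan is to prove the colinearity identity (\ref{l-idem0}) first, since (\ref{l-idem1}), (\ref{l-idem1-1}), (\ref{l-idem2}), (\ref{l-idem2-1}) and (\ref{l-idem3}) will all be deduced from it together with the idempotent relations $p_{M}\co q_{M}=p_{M}$, $q_{M}\co i_{M}=i_{M}$ and $q_{B}\co i_{B}=i_{B}$; only (\ref{l-idem4}) is independent. To establish (\ref{l-idem0}) I would expand its left-hand side by the coaction-action compatibility (d2-3) of Definition \ref{H-D-mod}, rewriting $\rho_{M}\co \phi_{M}\co (i_{M}\ot B)$ as $(\phi_{M}\ot \mu_{H})\co (M\ot c_{H,B}\ot H)\co ((\rho_{M}\co i_{M})\ot \rho_{B})$, and then invoke Proposition \ref{idem-M}, namely that $i_{M}$ equalizes $\rho_{M}$ and $(M\ot \overline{\Pi}_{H}^{R})\co\rho_{M}$ (equivalently $\rho_{M}$ and $(M\ot \Pi_{H}^{L})\co\rho_{M}$), so as to insert $\overline{\Pi}_{H}^{R}$ on the $H$-leg of $\rho_{M}\co i_{M}$. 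The remaining work is to collapse the factor $\mu_{H}\co (\overline{\Pi}_{H}^{R}\ot H)$ against $\rho_{B}$ back to the identity on the $H$-component, using the structural identities for the (co)target morphisms recorded in \cite{Asian} and, where needed, the comodule-magma relations (b1)--(b6) of $B$. I expect this collapse to be the main obstacle: in the quasigroup setting convolution is not associative, so the cancellations that are automatic for weak Hopf algebras must here be carried out through the explicit axioms rather than by manipulating $\Pi_{H}^{L}\ast(-)$ freely.

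Granting (\ref{l-idem0}), identity (\ref{l-idem2-1}) is immediate: composing (\ref{l-idem0}) with $i_{B}$ on the right and using (Proposition \ref{idem-B}) that the $H$-leg of $\rho_{B}\co i_{B}$ is fixed by $\Pi_{H}^{L}$, one sees that $\phi_{M}\co (i_{M}\ot i_{B})$ equalizes $\rho_{M}$ and $(M\ot \Pi_{H}^{L})\co\rho_{M}$; hence it factors through the equalizer $i_{M}$ and is fixed by $q_{M}$, which is exactly (\ref{l-idem2-1}). For (\ref{l-idem2}) I would first note that, by the definition of $q_{M}$ and (\ref{l-idem0}), $q_{M}\co \phi_{M}\co (i_{M}\ot B)=\phi_{M}\co ((\phi_{M}\co (i_{M}\ot B))\ot (h\co\lambda_{H}))\co (M^{co H}\ot \rho_{B})$; then, writing the inner $B$-input through the reconstruction identity (\ref{anchor21}) $\mu_{B}\co (q_{B}\ot h)\co \rho_{B}=id_{B}$, whose factor $q_{B}=i_{B}\co p_{B}$ lands in the image of $i_{B}$, the restricted associativity (d2-2) together with (d2-4) and (\ref{anchor1}) reduce (\ref{l-idem2}) to the purely $B$-theoretic identity $\mu_{B}\co (q_{B}\ot (h\co \Pi_{H}^{L}))\co \rho_{B}=q_{B}$. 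This last identity is proved by the same scheme one dimension down: the comodule-magma analogue of (\ref{l-idem0}), i.e. $\rho_{B}\co \mu_{B}\co (i_{B}\ot B)=(\mu_{B}\ot H)\co (i_{B}\ot \rho_{B})$, combined with the anchor condition (c1) of Definition \ref{anchor}.

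The identities (\ref{l-idem1}), (\ref{l-idem1-1}) and (\ref{l-idem3}) are then formal. Composing (\ref{l-idem0}) on the left with $q_{M}\ot H$ and replacing $q_{M}\co \phi_{M}\co (i_{M}\ot B)$ by $\phi_{M}\co (i_{M}\ot q_{B})$ through (\ref{l-idem2}) yields (\ref{l-idem1}); composing (\ref{l-idem1}) further with $p_{M}\ot H$ and using $p_{M}\co q_{M}=p_{M}$ yields (\ref{l-idem1-1}); and composing (\ref{l-idem2}) on the left with $p_{M}$, again via $p_{M}\co q_{M}=p_{M}$, yields (\ref{l-idem3}).

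Finally, (\ref{l-idem4}) is proved independently, in exact analogy with (\ref{anchor21}). Expanding $q_{M}=\phi_{M}\co (M\ot (h\co\lambda_{H}))\co \rho_{M}$ and using the coassociativity of $\rho_{M}$ to turn $(\rho_{M}\ot H)\co\rho_{M}$ into $(M\ot \delta_{H})\co\rho_{M}$, condition (d2-5) of Definition \ref{H-D-mod} collapses the two copies of $h$ into a single $h\co \Pi_{H}^{R}$, and then (\ref{idemp-3M}) gives $\phi_{M}\co (M\ot (h\co \Pi_{H}^{R}))\co\rho_{M}=id_{M}$, completing the proof.
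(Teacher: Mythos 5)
Your proposal is correct, and its skeleton is the paper's: you prove (\ref{l-idem0}) by exactly the computation the paper uses (expand with (d2-3), insert $\overline{\Pi}_{H}^{R}$ through the equalizer property of $i_{M}$ from Proposition \ref{idem-M}, collapse $\mu_{H}\co(\overline{\Pi}_{H}^{R}\ot H)$ against the coassociativity of $\rho_{B}$, and finish with (d2-3) and the counit), and you obtain (\ref{l-idem1}), (\ref{l-idem1-1}), (\ref{l-idem3}) and (\ref{l-idem4}) by the same compositions as the paper ((\ref{l-idem4}) via comodule coassociativity, (d2-5) and (\ref{idemp-3M})). You genuinely deviate at two points, both defensibly. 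For (\ref{l-idem2}) the paper passes from $\phi_{M}\co(\phi_{M}\ot(h\co\lambda_{H}))\co(i_{M}\ot\rho_{B})$ to $\phi_{M}\co(i_{M}\ot q_{B})$ citing only (d2-4); since (d2-4) requires the factor acted on first to lie in the image of $h$, that citation is very compressed. Your route --- insert $id_{B}=\mu_{B}\co(q_{B}\ot h)\co\rho_{B}$ from (\ref{anchor21}), apply (d2-2) (legitimate because $q_{B}=i_{B}\co p_{B}$), then (d2-4), then (d2-2) back, reducing the claim to $\mu_{B}\co(q_{B}\ot(h\co\Pi_{H}^{L}))\co\rho_{B}=q_{B}$ --- supplies exactly the intermediate steps the paper leaves implicit. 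Note only that this last $B$-level identity follows from (\ref{re-c1}), coassociativity of $\rho_{B}$ and (\ref{anchor21}) alone, so the comodule-magma analogue of (\ref{l-idem0}) you invoke for it, while true, is not actually needed there. For (\ref{l-idem2-1}) you argue directly from (\ref{l-idem0}): compose with $i_{B}$, fix the $H$-leg by $\Pi_{H}^{L}$ via Proposition \ref{idem-B}, and factor through the equalizer $i_{M}$, so that $q_{M}$ acts as the identity; the paper instead specializes (\ref{l-idem2}) with $M^{coH}\ot i_{B}$ and uses $q_{B}\co i_{B}=i_{B}$. Your variant is independent of (\ref{l-idem2}) (it is the argument the paper itself repeats for (\ref{phiMcoH}) in Proposition \ref{M-coinv}), whereas the paper's is a one-line specialization; both are sound.
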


\begin{proof} The proof for the first equality is the following:
\begin{itemize}
\item[ ]$\hspace{0.38cm} \rho_{M}\co \phi_{M}\co (i_{M}\ot B)$

\item[ ]$= (\phi_{M}\ot \mu_{H})\co (M\ot c_{H,B}\ot H)\co ((\rho_{M}\co i_{M})\ot \rho_{B}) $ {\scriptsize ({\blue (d2-3) of Definition (\ref{H-D-mod})})}

\item[ ]$=(\phi_{M}\ot \mu_{H})\co (M\ot c_{H,B}\ot H)\co (((M\ot \overline{\Pi}_{H}^{R})\co\rho_{M}\co i_{M})\ot \rho_{B})  $ {\scriptsize  ({\blue (\ref{idemp-1M})})}

\item[ ]$= (\phi_{M}\ot (\mu_{H}\co ( \overline{\Pi}_{H}^{R}\ot H)))\co (M\ot c_{H,B}\ot H)\co ((\rho_{M}\co i_{M})\ot \rho_{B}) $ {\scriptsize  ({\blue naturality of $c$})}

\item[ ]$=(\phi_{M}\ot (\varepsilon_{H}\co \mu_{H})\ot H)\co (M\ot c_{H,B}\ot H\ot H)\co ((\rho_{M}\co i_{M})\ot ((B\ot \delta_{H})\co \rho_{B})) $ {\scriptsize ({\blue (10) of \cite{Asian}})}

\item[ ]$= (((\phi_{M}\ot (\varepsilon_{H}\co \mu_{H}))\co (M\ot c_{H,B}\ot H)\co (\rho_{M} \ot \rho_{B}))\ot H)\co (i_{M}\ot \rho_{B})$ {\scriptsize  ({\blue comodule condition for $B$})}

\item[ ]$= (((M\ot \varepsilon_{H})\co \rho_{M}\co \phi_{M})\ot H)\co (i_{M}\ot \rho_{B}) $ {\scriptsize  ({\blue (d2-3) of Definition (\ref{H-D-mod})})}

\item[ ]$= (\phi_{M}\ot H)\co (i_{M}\ot \rho_{B}) $ {\scriptsize  ({\blue comodule condition for $B$}).}

\end{itemize}

The equality (\ref{l-idem2}) follows by 

\begin{itemize}
\item[ ]$\hspace{0.38cm} q_{M}\co \phi_{M}\co (i_{M}\ot B)$

\item[ ]$= \phi_{M}\circ (\phi_{M}\ot (h\co\lambda_H))\circ  (i_{M}\ot \rho_{B}) $ {\scriptsize  ({\blue (\ref{l-idem0})})}

\item[ ]$=\phi_{M}\co (i_{M}\ot q_{B})$ {\scriptsize  ({\blue (d2-4) of Definition (\ref{H-D-mod})}).}

\end{itemize}

Thus, composing in (\ref{l-idem0}) with $q_{M}\ot H$, we obtain (\ref{l-idem1}). Also, composing in (\ref{l-idem1}) with $p_{M}\ot B$, we have (\ref{l-idem1-1}). Moreover, composing with $M^{coH}\ot i_{B}$ in (\ref{l-idem2}), we obtain (\ref{l-idem2-1}), and doing the same with 
$p_{M}$ we get (\ref{l-idem3}). Finally, (\ref{l-idem4}) holds because 
\begin{itemize}
\item[ ]$\hspace{0.38cm}\phi_{M}\co (q_{M}\ot h)\co \rho_{M}$

\item[ ]$=\phi_{M}\co ((\phi_{M}\co (M\ot (h\co \lambda_{H})))\ot h)\co (M\ot \delta_{H})\co \rho_{M} $ {\scriptsize  ({\blue comodule condition for $M$})}

\item[ ]$=\phi_{M}\co (M\ot (h\co \Pi_{H}^{R}))\co \rho_{M} $ {\scriptsize  ({\blue (d2-5) of Definition (\ref{H-D-mod})})}

\item[ ]$= id_{M}$ {\scriptsize  ({\blue (\ref{idemp-3M})}).}

\end{itemize}

\end{proof}

\begin{proposition}
\label{B-coinv}
Let $H$ be a weak Hopf quasigroup and let $(B,\rho_{B})$ be a right $H$-comodule
magma. Let $h:H\rightarrow B$ be an anchor morphism. If (\ref{strong-1}) holds, the submagma of coinvariants 
$(B^{co H}, \eta_{B^{co H}}, \mu_{B^{co H}})$ is a monoid. 
\end{proposition}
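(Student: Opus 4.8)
The plan is to reduce the statement to the associativity of $\mu_{B^{co H}}$, since by the preceding Remark the triple $(B^{co H}, \eta_{B^{co H}}, \mu_{B^{co H}})$ is already a unital magma. The decisive structural fact I would exploit is that $i_{B}$ is a split monomorphism (because $p_{B}\co i_{B}=id_{B^{co H}}$); consequently it suffices to verify the associativity identity after composing on the left with $i_{B}$, which allows me to transport the whole computation from $B^{co H}$ up to $B$, where the hypothesis (\ref{strong-1}) lives.

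First I would rewrite both sides of the desired law $\mu_{B^{co H}}\co (\mu_{B^{co H}}\ot B^{co H})=\mu_{B^{co H}}\co (B^{co H}\ot \mu_{B^{co H}})$ after applying $i_{B}$, using the defining relation (\ref{muH}), namely $\mu_{B}\co (i_{B}\ot i_{B})=i_{B}\co \mu_{B^{co H}}$, twice on each side. This turns $i_{B}\co \mu_{B^{co H}}\co (\mu_{B^{co H}}\ot B^{co H})$ into $\mu_{B}\co (\mu_{B}\ot B)\co (i_{B}\ot i_{B}\ot i_{B})$, and symmetrically $i_{B}\co \mu_{B^{co H}}\co (B^{co H}\ot \mu_{B^{co H}})$ into $\mu_{B}\co (B\ot \mu_{B})\co (i_{B}\ot i_{B}\ot i_{B})$. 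At this point the two expressions differ only by the position of the inner $\mu_{B}$, exactly as in (\ref{strong-1}).

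Next I would invoke the hypothesis (\ref{strong-1}), which reads $\mu_{B}\co (\mu_{B}\ot B)\co (B\ot i_{B}\ot B)=\mu_{B}\co (B\ot \mu_{B})\co (B\ot i_{B}\ot B)$. Precomposing this identity with $i_{B}\ot B^{co H}\ot i_{B}$ and using that $(B\ot i_{B}\ot B)\co (i_{B}\ot B^{co H}\ot i_{B})=i_{B}\ot i_{B}\ot i_{B}$, I obtain precisely $\mu_{B}\co (\mu_{B}\ot B)\co (i_{B}\ot i_{B}\ot i_{B})=\mu_{B}\co (B\ot \mu_{B})\co (i_{B}\ot i_{B}\ot i_{B})$. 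Combining this with the rewriting of the previous paragraph yields $i_{B}\co \mu_{B^{co H}}\co (\mu_{B^{co H}}\ot B^{co H})=i_{B}\co \mu_{B^{co H}}\co (B^{co H}\ot \mu_{B^{co H}})$, and cancelling the monomorphism $i_{B}$ gives the associativity of $\mu_{B^{co H}}$. Together with the unital magma structure from the Remark, this shows $B^{co H}$ is a monoid.

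I do not anticipate a genuine obstacle here; the argument is essentially formal. The only point requiring care is the bookkeeping of tensor factors, so that the single $i_{B}$ occurring in the middle slot of (\ref{strong-1}) is matched with $B^{co H}$ while the two outer copies of $i_{B}$ are absorbed by the precomposition. The real content of the proposition is the observation that (\ref{strong-1}) is exactly the image under $i_{B}$ of the associativity of the induced product $\mu_{B^{co H}}$.
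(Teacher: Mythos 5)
Your proof is correct, and its core coincides with the paper's: associativity of $\mu_{B^{co H}}$ is verified after composing with the split monomorphism $i_{B}$, where it becomes exactly the instance of (\ref{strong-1}) with all three tensor factors restricted along $i_{B}$. The difference lies in which form of the factorization you use and, consequently, which auxiliary identity is needed. You invoke (\ref{muH}), i.e. $\mu_{B}\co (i_{B}\ot i_{B})=i_{B}\co \mu_{B^{co H}}$, recorded in the Remark after Proposition \ref{idem-B} (citing \cite{JPAA}); with it the idempotent $q_{B}$ never appears, and at the end you cancel the split mono $i_{B}$. The paper instead works from the formula (\ref{muH1}), $\mu_{B^{co H}}=p_{B}\co \mu_{B}\co (i_{B}\ot i_{B})$, so each substitution produces a $q_{B}=i_{B}\co p_{B}$, and to remove and reinstate the inner $q_{B}$ it needs the identity (\ref{l-idemB}), $q_{B}\co \mu_{B}\co (i_{B}\ot i_{B})=\mu_{B}\co (i_{B}\ot i_{B})$, which it derives from Lemma \ref{l-idem} (equation (\ref{l-idem2})) applied to the strong $(H,B,h)$-Hopf module $(B,\mu_{B},\rho_{B})$ --- a structure whose existence itself uses (\ref{strong-1}). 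Note that (\ref{l-idemB}) is precisely equivalent to (\ref{muH}), so the two proofs differ only in the provenance of this factorization identity: you take it as given from the Remark, while the paper re-derives it internally from its Hopf-module machinery, making its argument self-contained relative to the bare definition (\ref{muH1}). Both routes are legitimate; yours is shorter and bypasses Lemma \ref{l-idem} and the strong Hopf module structure of $B$ entirely, whereas the paper's avoids leaning on the externally cited identity at this point.
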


\begin{proof} Firstly, remember that if (\ref{strong-1}) holds, the triple $(B,\mu_{B},\rho_{B})$ is a strong $(H,B,h)$-Hopf module. Then by (\ref{l-idem2}), 
\begin{equation}
\label{l-idemB}
q_{B}\co \mu_{B}\co (i_{B}\ot i_{B})=\mu_{B}\co (i_{B}\ot i_{B}).
\end{equation}

 As a consequence, $B^{co H}$ is a monoid because, 

\begin{itemize}
\item[ ]$\hspace{0.38cm} i_{B}\co \mu_{B^{co H}} \co (B^{coH}\ot \mu_{B^{co H}})$

\item[ ]$= q_{B}\co \mu_{B}\co (i_{B}\ot (q_{B}\co \mu_{B}\co (i_{B}\ot i_{B})))$ {\scriptsize  ({\blue (\ref{muH1})})}

\item[ ]$=q_{B}\co \mu_{B}\co (i_{B}\ot ( \mu_{B}\co (i_{B}\ot i_{B})))$ {\scriptsize  ({\blue (\ref{l-idemB})})}

\item[ ]$= q_{B}\co \mu_{B}\co (( \mu_{B}\co (i_{B}\ot i_{B}))\ot i_{B}) $ {\scriptsize  ({\blue (\ref{strong-1})})}

\item[ ]$=q_{B}\co \mu_{B}\co (( q_{B}\co \mu_{B}\co (i_{B}\ot i_{B}))\ot i_{B})$ {\scriptsize  ({\blue (\ref{l-idemB})})}

\item[ ]$=i_{B}\co \mu_{B^{co H}} \co (\mu_{B^{co H}}\ot B^{co H})$ {\scriptsize  ({\blue (\ref{muH1})}).}

\end{itemize}

\end{proof}

\begin{proposition}
\label{M-coinv}
Let $H$ be a weak Hopf quasigroup and let $(B,\rho_{B})$ be a right $H$-comodule
magma. Let $h:H\rightarrow B$ be an anchor morphism. If (\ref{strong-1}) holds, for all strong $(H,B,h)$-Hopf module $(M, \phi_{M}, \rho_{M})$,  the object of coinvariants $M^{coH}$ is a right $B^{coH}$-module. 
\end{proposition}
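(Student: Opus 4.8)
The plan is to equip $M^{coH}$ with the action
$$\phi_{M^{coH}}:=p_{M}\co \phi_{M}\co (i_{M}\ot i_{B}):M^{coH}\ot B^{coH}\rightarrow M^{coH},$$
and to check the two module axioms for this morphism over the monoid $B^{coH}$, whose monoid structure is guaranteed by Proposition \ref{B-coinv} (since (\ref{strong-1}) is assumed). The cornerstone of the argument is the factorization identity
$$i_{M}\co \phi_{M^{coH}}=\phi_{M}\co (i_{M}\ot i_{B}),$$
which is immediate from (\ref{l-idem2-1}): indeed $i_{M}\co \phi_{M^{coH}}=q_{M}\co \phi_{M}\co (i_{M}\ot i_{B})=\phi_{M}\co (i_{M}\ot i_{B})$. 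This identity says that $\phi_{M}\co (i_{M}\ot i_{B})$ lands in $M^{coH}$, so the definition is legitimate; equally importantly, it lets me carry out every verification at the level of $M$ and then cancel the split monomorphism $i_{M}$ at the end, using $p_{M}\co i_{M}=id_{M^{coH}}$.

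For the unit axiom I would precompose with $i_{M}$ and compute
$$i_{M}\co \phi_{M^{coH}}\co (M^{coH}\ot \eta_{B^{coH}})=\phi_{M}\co (i_{M}\ot (i_{B}\co \eta_{B^{coH}}))=\phi_{M}\co (i_{M}\ot \eta_{B})=i_{M},$$
using the factorization identity, then (\ref{etaH}), and finally (d2-1) of Definition \ref{H-D-mod}. Cancelling $i_{M}$ yields $\phi_{M^{coH}}\co (M^{coH}\ot \eta_{B^{coH}})=id_{M^{coH}}$.

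For the associativity axiom I would again precompose with $i_{M}$ and apply the factorization identity twice to reduce the left-hand side to
$$i_{M}\co \phi_{M^{coH}}\co (\phi_{M^{coH}}\ot B^{coH})=\phi_{M}\co ((\phi_{M}\co (M\ot i_{B}))\ot B)\co (i_{M}\ot B^{coH}\ot i_{B}).$$
The key input is now (d2-2) of Definition \ref{H-D-mod}, which is exactly the associativity-type constraint attached to $i_{B}$; invoking it and then (\ref{muH}) (the defining property of $\mu_{B^{coH}}$) rewrites the right-hand side as $\phi_{M}\co (i_{M}\ot(\mu_{B}\co(i_{B}\ot i_{B})))=\phi_{M}\co (i_{M}\ot(i_{B}\co \mu_{B^{coH}}))=i_{M}\co \phi_{M^{coH}}\co (M^{coH}\ot \mu_{B^{coH}})$. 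Cancelling $i_{M}$ gives the desired associativity.

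The only genuine delicacy is the bookkeeping in the associativity step: one must insert the two copies of $i_{B}$ into the correct slots of (d2-2) before passing to (\ref{muH}). Once the factorization identity is established, no further structural facts about $H$ or about the coinvariants are required, so I do not anticipate a real obstacle; in particular the monoid structure on $B^{coH}$ provided by Proposition \ref{B-coinv} is needed only to make the statement meaningful, not in the computations themselves.
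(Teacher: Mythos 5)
Your proposal is correct and follows essentially the same route as the paper: the action is the same morphism $\phi_{M^{coH}}=p_{M}\co \phi_{M}\co (i_{M}\ot i_{B})$, and both module axioms are verified by exactly the chain the paper uses, namely (\ref{etaH}) and (d2-1) for the unit, and (\ref{l-idem2-1}), (d2-2) and (\ref{muH}) for associativity. The only (inessential) divergence is how well-definedness is justified: you obtain the factorization $i_{M}\co \phi_{M^{coH}}=\phi_{M}\co (i_{M}\ot i_{B})$ directly from (\ref{l-idem2-1}) and the splitting $q_{M}=i_{M}\co p_{M}$, whereas the paper first proves the identity (\ref{phiMcoH}) and then invokes the equalizer property of $i_{M}$ from Proposition \ref{idem-M} -- both facts being already available, this changes nothing of substance.
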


\begin{proof} First note that, by Proposition \ref{B-coinv}, the object $B^{co H}$ is a monoid. Now we will show that there exists  an action $\phi_{M^{coH}}:M^{coH}\ot B^{coH}\rightarrow M^{coH}$ such that $\phi_{M^{coH}}\co (M^{coH}\ot \eta_{B^{co H}})=id_{M^{coH}},$ and $\phi_{M^{coH}}\co (\phi_{M^{coH}}\ot B^{co H})=\phi_{M^{coH}}\co (M^{coH}\ot \mu_{B^{co H}})$. To define the action, we begin by proving that 
\begin{equation}
\label{phiMcoH}
\rho_{M}\co \phi_{M}\co (i_{M}\ot i_{B})=(M\ot \Pi_{H}^{L})\co \rho_{M}\co \phi_{M}\co (i_{M}\ot i_{B}).
\end{equation}

Indeed, 

\begin{itemize}
\item[ ]$\hspace{0.38cm} \rho_{M}\co \phi_{M}\co (i_{M}\ot i_{B})$

\item[ ]$=(\phi_{M}\ot H)\co (i_{M}\ot (\rho_{B}\co i_{B}))$ {\scriptsize  ({\blue (\ref{l-idem0})})}

\item[ ]$= (\phi_{M}\ot \Pi_{H}^{L})\co (i_{M}\ot (\rho_{B}\co i_{B}))$ {\scriptsize  ({\blue (\ref{idemp-0})})}

\item[ ]$= (M\ot \Pi_{H}^{L})\co \rho_{M}\co \phi_{M}\co (i_{M}\ot i_{B})$ {\scriptsize  ({\blue (\ref{l-idem0})})}

\end{itemize}

Then, there exists a unique morphism $\phi_{M^{coH}}:M^{coH}\ot B^{coH}\rightarrow M^{coH}$ such that
\begin{equation}
\label{PhiBcoH}
\phi_{M}\co (i_{M}\ot i_{B})=i_{M}\co \phi_{M^{coH}}.
\end{equation}

Therefore, 
\begin{equation}
\label{PhiBcoH1}
\phi_{M^{coH}}=p_{M}\co \phi_{M}\co (i_{M}\ot i_{B}).
\end{equation}

The pair $(M^{coH}, \phi_{M^{coH}})$ satisfies the conditions of right $B^{coH}$-module because 
$$\phi_{M^{coH}}\co (M^{coH}\ot \eta_{B^{co H}})\stackrel{{\scriptsize \blue  (\ref{etaH})}}{=}
p_{M}\co \phi_{M}\co (i_{M}\ot \eta_{B})\stackrel{{\scriptsize \blue  ({\rm d2-1})}}{=}p_{M}\co i_{M}=id_{M^{coH}},$$
and 
\begin{itemize}
\item[ ]$\hspace{0.38cm}\phi_{M^{coH}}\co (\phi_{M^{coH}}\ot B^{co H})$

\item[ ]$=p_{M}\co \phi_{M}\co  ((q_{M}\co \phi_{M}\co (i_{M}\ot i_{B}))\ot i_{B})$ {\scriptsize  ({\blue (\ref{PhiBcoH1})})}

\item[ ]$= p_{M}\co \phi_{M}\co  ((\phi_{M}\co (i_{M}\ot i_{B}))\ot i_{B})$ {\scriptsize  ({\blue (\ref{l-idem2-1})})}

\item[ ]$=p_{M}\co \phi_{M}\co  (i_{M}\ot (\mu_{B}\co (i_{B}\ot i_{B})))  $ {\scriptsize  ({\blue (d2-2) of Definition (\ref{H-D-mod})})}

\item[ ]$=p_{M}\co \phi_{M}\co  (i_{M}\ot (i_{B}\co \mu_{B^{coH}})) $ {\scriptsize  ({\blue (\ref{muH})})}

\item[ ]$=\phi_{M^{coH}}\co (M^{coH}\ot \mu_{B^{co H}}) $ {\scriptsize  ({\blue (\ref{PhiBcoH1})})}.

\end{itemize}

\end{proof}

\begin{proposition}
\label{tensor}
Let $H$ be a weak Hopf quasigroup and let $(B,\rho_{B})$ be a right $H$-comodule
magma. Let $h:H\rightarrow B$ be an anchor morphism. Assume that (\ref{strong-1})  and 
\begin{equation}
\label{strong2}
\mu_{B}\co (i_{B}\ot \mu_{B})=\mu_{B}\co ((\mu_{B}\co (i_{B}\ot B))\ot B)
\end{equation}
hold. Then if the category ${\mathcal C}$ admits coequalizers and the functors $-\ot B$ and $-\ot H$ preserve coequalizers, for all strong $(H,B,h)$-Hopf module $(M, \phi_{M}, \rho_{M})$,  the object $M^{coH}\ot_{B^{coH}}B$, defined by the coequalizer of $T_{M}^{1}=\phi_{M^{coH}}\ot B$ and $T_{M}^{2}=M^{coH}\ot (\mu_{B}\co (i_{B}\ot B))$, is a strong $(H,B,h)$-Hopf module. Moreover, 
$M^{coH}\ot_{B^{coH}}B$ and $M$ are isomorphic as right $H$-comodules.
\end{proposition}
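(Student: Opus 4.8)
The plan is to realize $M^{co H}\ot_{B^{co H}}B$ as a quotient of $M^{co H}\ot B$ and to transport every structure through the coequalizer. Write $n_{M}:M^{co H}\ot B\rightarrow M^{co H}\ot_{B^{co H}}B$ for the coequalizer of $T_{M}^{1}$ and $T_{M}^{2}$; since $-\ot B$ and $-\ot H$ preserve coequalizers, $n_{M}\ot B$ and $n_{M}\ot H$ are again coequalizers, hence epimorphisms. First I would define the action $\phi$ on $M^{co H}\ot_{B^{co H}}B$ as the unique morphism with $\phi\co (n_{M}\ot B)=n_{M}\co (M^{co H}\ot \mu_{B})$. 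This is legitimate because $n_{M}\co (M^{co H}\ot \mu_{B})$ coequalizes $T_{M}^{1}\ot B$ and $T_{M}^{2}\ot B$: after applying the coequalizer relation to turn $\phi_{M^{co H}}\ot B$ into $\mu_{B}\co (i_{B}\ot B)$, the required equality is exactly the mixed associativity (\ref{strong2}).

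Next I would build the underlying isomorphism before introducing the coaction, since the former does not need the latter. Set $\omega_{M}:=\phi_{M}\co (i_{M}\ot B):M^{co H}\ot B\rightarrow M$. Reading (d2-2) of Definition \ref{H-D-mod} with $i_{M}$ in the first slot and using (\ref{PhiBcoH}) shows that $\omega_{M}$ coequalizes $T_{M}^{1}$ and $T_{M}^{2}$, so it factors uniquely as $\omega_{M}=\overline{\omega}_{M}\co n_{M}$. As inverse I would take $\overline{\theta}_{M}:=n_{M}\co (p_{M}\ot h)\co \rho_{M}$. One composite is immediate: $\overline{\omega}_{M}\co \overline{\theta}_{M}=\phi_{M}\co (q_{M}\ot h)\co \rho_{M}=id_{M}$ by (\ref{l-idem4}), since $(i_{M}\ot B)\co (p_{M}\ot h)=q_{M}\ot h$. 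For the other composite, as $n_{M}$ is epi it suffices to check $\overline{\theta}_{M}\co \omega_{M}=n_{M}$. I would compute $(p_{M}\ot h)\co \rho_{M}\co \phi_{M}\co (i_{M}\ot B)$ with (\ref{l-idem1-1}) and (\ref{PhiBcoH1}), rewriting it as $(\phi_{M^{co H}}\ot h)\co (M^{co H}\ot((p_{B}\ot H)\co \rho_{B}))$; then the coequalizer relation turns $\phi_{M^{co H}}\ot B$ into $\mu_{B}\co (i_{B}\ot B)$ and the whole expression collapses to $n_{M}\co (M^{co H}\ot(\mu_{B}\co (q_{B}\ot h)\co \rho_{B}))$, which equals $n_{M}$ by (\ref{anchor21}). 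Hence $\overline{\omega}_{M}$ is an isomorphism.

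I would then define the coaction $\rho$ as the unique morphism with $\rho\co n_{M}=(n_{M}\ot H)\co (M^{co H}\ot \rho_{B})$; its well-definedness is now clean, because $\overline{\omega}_{M}\ot H$ is a monomorphism and $(\overline{\omega}_{M}\ot H)\co (n_{M}\ot H)\co (M^{co H}\ot \rho_{B})=\rho_{M}\co \omega_{M}$ by (\ref{l-idem0}), a morphism that coequalizes $T_{M}^{1},T_{M}^{2}$ since $\omega_{M}$ does. With $\phi$ and $\rho$ in hand I would verify the axioms of Definition \ref{H-D-mod}: each is an identity between morphisms out of $M^{co H}\ot_{B^{co H}}B$ tensored with $B$, $H$ or $K$, and precomposing with the corresponding epimorphism ($n_{M}$, $n_{M}\ot B$, or $n_{M}\ot H$) reduces it to the analogous identity for $B$. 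Thus (d1) follows from the counit and coassociativity of $\rho_{B}$ (or by transport along the comodule isomorphism below), (d2-1) from the unit condition of $\mu_{B}$, (d2-2) from (\ref{strong-1}), (d2-3) from (\ref{chmagma}), (d2-4) from (c1) of Definition \ref{anchor}, and (d2-5) from (c2) of Definition \ref{anchor}. I expect this axiom bookkeeping, and in particular the justification that the tensored projections stay epimorphic (where one uses the coequalizer-preservation hypotheses and that $B^{co H}$ is a retract of $B$), to be the main technical obstacle; this is precisely where (\ref{strong-1}), (\ref{strong2}) and the preservation assumptions are indispensable.

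Finally I would confirm that $\overline{\omega}_{M}$ respects the comodule structures. Since $n_{M}$ is epi, it is enough to verify $\rho_{M}\co \overline{\omega}_{M}\co n_{M}=(\overline{\omega}_{M}\ot H)\co \rho\co n_{M}$, and both sides equal $(\omega_{M}\ot H)\co (M^{co H}\ot \rho_{B})$: the left by (\ref{l-idem0}) together with $\overline{\omega}_{M}\co n_{M}=\omega_{M}$, the right by the defining relation for $\rho$ and again $\overline{\omega}_{M}\co n_{M}=\omega_{M}$. Therefore $\overline{\omega}_{M}$ is an isomorphism of right $H$-comodules, and $M^{co H}\ot_{B^{co H}}B$ and $M$ are isomorphic as right $H$-comodules, as claimed. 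Note that one does not expect $\overline{\omega}_{M}$ to be a module morphism, consistent with the statement asserting only a comodule isomorphism: transporting the action would require associativity of $\phi_{M}$, which fails because $B$ is merely a magma.
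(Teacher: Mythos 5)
Your proposal is correct and follows essentially the same route as the paper's proof: define the action and coaction on $M^{coH}\ot_{B^{coH}}B$ by the universal property of the (tensored) coequalizer, check the axioms of Definition \ref{H-D-mod} by precomposing with the resulting epimorphisms (reducing them to (\ref{strong-1}), (\ref{strong2}), (\ref{chmagma}), (c1) and (c2)), and obtain the comodule isomorphism as the factorization of $\phi_{M}\co (i_{M}\ot B)$ through the coequalizer, with inverse $n_{M}\co (p_{M}\ot h)\co \rho_{M}$, exactly via (\ref{l-idem4}), (\ref{l-idem1-1}), (\ref{PhiBcoH1}) and (\ref{anchor21}). Your only deviations are tactical and harmless: you construct the isomorphism before the coaction and justify the coaction's well-definedness by cancelling the monomorphism $\overline{\omega}_{M}\ot H$ using (\ref{l-idem0}) for $M$, where the paper instead applies (\ref{l-idem0}) to the strong Hopf module $(B,\mu_{B},\rho_{B})$; and for (d2-2) you argue directly with the epimorphism $n_{M}\ot B^{coH}\ot B$ (justified by the retract $i_{B}$, $p_{B}$), where the paper works with $q_{B}$ and then splits off $p_{B}$.
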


\begin{proof} In the first step, we begin by proving the existence of an action and a coaction for $M^{coH}\ot_{B^{coH}}H$.  If the object $M^{coH}\ot_{B^{coH}}H$ is defined by the coequalizer diagram 

\begin{equation}
\label{coeq-1}
\setlength{\unitlength}{1mm}
\begin{picture}(101.00,10.00)
\put(22.50,8.00){\vector(1,0){20.00}}
\put(22.50,4.00){\vector(1,0){20.00}}
\put(63.00,6.00){\vector(1,0){20.00}}
\put(32.00,11.00){\makebox(0,0)[cc]{$T_{M}^{1}$ }}
\put(32.00,1.00){\makebox(0,0)[cc]{$T_{M}^{2}$}} 
\put(74.00,9.00){\makebox(0,0)[cc]{$n_{M^{coH}}$ }}
\put(7.00,6.00){\makebox(0,0)[cc]{$ M^{coH}\otimes B^{coH}\ot B$ }}
\put(54.00,6.00){\makebox(0,0)[cc]{$ M^{coH}\ot B$ }}
\put(96.00,6.00){\makebox(0,0)[cc]{$M^{coH}\ot_{B^{coH}} B, $ }}
\end{picture}
\end{equation}
we have that 
\begin{equation}
\label{coeq-2}
\setlength{\unitlength}{1mm}
\begin{picture}(101.00,10.00)
\put(19.50,8.00){\vector(1,0){20.00}}
\put(19.50,4.00){\vector(1,0){20.00}}
\put(67.00,6.00){\vector(1,0){15.00}}
\put(30.00,11.00){\makebox(0,0)[cc]{$T_{M}^{1}\ot B$ }}
\put(30.00,1.00){\makebox(0,0)[cc]{$T_{M}^{2}\ot B$}} 
\put(74.00,9.00){\makebox(0,0)[cc]{$n_{M^{coH}}\ot B$ }}
\put(0.00,6.00){\makebox(0,0)[cc]{$ M^{coH}\otimes B^{coH}\ot B\ot B$ }}
\put(54.00,6.00){\makebox(0,0)[cc]{$ M^{coH}\ot B\ot B$ }}
\put(101.00,6.00){\makebox(0,0)[cc]{$M^{coH}\ot_{B^{coH}} B \ot B$ }}
\end{picture}
\end{equation}
is also a coequalizer diagram because the functor $-\ot B$ preserves coequalizers.  Consider the morphism $n_{M^{coH}}\co (M^{coH}\ot \mu_{B}): M^{coH}\ot B\ot B\rightarrow M^{coH}\ot_{B^{coH}} B$. Then, 
$$n_{M^{coH}}\co (\phi_{M^{coH}}\ot \mu_{B})\stackrel{{\scriptsize \blue  (\ref{coeq-1})}}{=}
n_{M^{coH}}\co (M^{coH}\ot (\mu_{B}\co (i_{B}\ot\mu_{B})))\stackrel{{\scriptsize \blue  (\ref{strong2})}}{=}n_{M^{coH}}\co (M^{coH}\ot (\mu_{B}\co ((\mu_{B}\co (i_{B}\ot B))\ot B))),$$
and, as a consequence, there exists a unique morphism $$\phi_{M^{coH}\ot_{B^{coH}} B}:M^{coH}\ot_{B^{coH}} B \ot B\rightarrow M^{coH}\ot_{B^{coH}} B$$ such that 
\begin{equation}
\label{phi-ten}
\phi_{M^{coH}\ot_{B^{coH}} B}\co (n_{M^{coH}}\ot B)=n_{M^{coH}}\co (M^{co H}\ot \mu_{B}).
\end{equation}

On the other hand, consider the morphism 
$$(n_{M^{coH}}\ot H)\co (M^{co H}\ot \rho_{B}): M^{coH}\ot B\rightarrow M^{coH}\ot_{B^{coH}} B\ot H.$$

Then, taking into account that, by (\ref{strong-1}), $(B,\mu_{B},\rho_{B})$ is a strong $(H,B,h)$-Hopf module, 

\begin{itemize}
\item[ ]$\hspace{0.38cm} (n_{M^{coH}}\ot H)\co (\phi_{M^{co H}}\ot \rho_{B})$

\item[ ]$= (n_{M^{coH}}\ot H)\co (M^{coH}\ot ((\mu_{B}\ot H)\co (i_{B}\ot \rho_{B})))$ {\scriptsize  ({\blue (\ref{coeq-1})})}

\item[ ]$= (n_{M^{coH}}\ot H)\co  (M^{coH}\ot (\rho_{B}\co \mu_{B}\co (i_{B}\ot B))) $ {\scriptsize  ({\blue (\ref{l-idem0})})}.

\end{itemize}

Thus, there exists a unique morphism 
$$\rho_{M^{coH}\ot_{B^{coH}} B}:M^{coH}\ot_{B^{coH}} B \rightarrow M^{coH}\ot_{B^{coH}} B\ot H$$ such that 
\begin{equation}
\label{rho-ten}
\rho_{M^{coH}\ot_{B^{coH}} B}\co n_{M^{coH}}=(n_{M^{coH}}\ot H)\co (M^{co H}\ot \rho_{B}).
\end{equation}

We proceed to show that $(M^{coH}\ot_{B^{coH}} B, \phi_{M^{coH}\ot_{B^{coH}} B}, \rho_{M^{coH}\ot_{B^{coH}} B})$ is a strong $(H,B,h)$-Hopf module. Indeed, first note that by (\ref{rho-ten}) and the comodule condition for $B$,  
$$(M^{coH}\ot_{B^{coH}} B\ot \varepsilon_{H})\co \rho_{M^{coH}\ot_{B^{coH}} B}\co n_{M^{coH}}=
(n_{M^{coH}}\ot \varepsilon_{H})\co (M^{co H}\ot \rho_{B})=n_{M^{coH}}.$$

As a consequence, and using that $n_{M^{coH}}$ is a coequalizer, we obtain that 
$$(M^{coH}\ot_{B^{coH}} B\ot \varepsilon_{H})\co \rho_{M^{coH}\ot_{B^{coH}} B}=id_{M^{coH}\ot_{B^{coH}} B}.$$

By similar arguments we have 
$$(\rho_{M^{coH}\ot_{B^{coH}} B}\ot H)\co \rho_{M^{coH}\ot_{B^{coH}} B}\co n_{M^{coH}}=
(n_{M^{coH}}\ot H\ot H)\co (M^{co H}\ot ((\rho_{B}\ot H)\co \rho_{B}))$$
$$=(n_{M^{coH}}\ot H\ot H)\co (M^{co H}\ot ((B\ot \delta_{H})\co \rho_{B}))=(M^{coH}\ot_{B^{coH}} B\ot \delta_{H})\co \rho_{M^{coH}\ot_{B^{coH}} B}\co n_{M^{coH}}.$$

Therefore, the pair $(M^{coH}\ot_{B^{coH}} B,  \rho_{M^{coH}\ot_{B^{coH}} B})$ is a right $H$-comodule and we have (d1) of Definition \ref{H-D-mod}.  

On the other hand,  by (\ref{phi-ten}), 
$$\phi_{M^{coH}\ot_{B^{coH}} B}\co (n_{M^{coH}}\ot \eta_{B})=n_{M^{coH}}, $$ 
and then, (d2-1) of Definition \ref{H-D-mod}, i.e., $\phi_{M^{coH}\ot_{B^{coH}} B}\co (M^{coH}\ot_{B^{coH}} B\ot \eta_{B})=id_{M^{coH}\ot_{B^{coH}} B}$, holds. Moreover, 
\begin{itemize}
\item[ ]$\hspace{0.38cm} \phi_{M^{coH}\ot_{B^{coH}} B}\co ((\phi_{M^{coH}\ot_{B^{coH}} B}\co (n_{M^{coH}}\ot q_{B}))\ot B)$

\item[ ]$= n_{M^{coH}}\co (M^{coH}\ot (\mu_{B}\co ((\mu_{B}\co (B\ot q_{B}))\ot B)))$ {\scriptsize  ({\blue (\ref{phi-ten})})}

\item[ ]$=n_{M^{coH}}\co (M^{coH}\ot (\mu_{B}\co (B\ot (\mu_{B}\co (q_{B}\ot B)))))$ {\scriptsize  ({\blue (\ref{strong-1})})}

\item[ ]$=\phi_{M^{coH}\ot_{B^{coH}} B}\co (n_{M^{coH}}\ot (\mu_{B}\co (q_{B}\ot B)))$ {\scriptsize  ({\blue (\ref{phi-ten})})}
\end{itemize}
and, as a consequence, 
$$\phi_{M^{coH}\ot_{B^{coH}} B}\co ((\phi_{M^{coH}\ot_{B^{coH}} B}\co (M^{coH}\ot_{B^{coH}} B\ot q_{B}))\ot B)$$ $$=\phi_{M^{coH}\ot_{B^{coH}} B}\co (M^{coH}\ot_{B^{coH}} B\ot (\mu_{B}\co (q_{B}\ot B)))$$
because $n_{M^{coH}}\ot B\ot B$ is a coequalizer. Therefore (d2-2) of Definition \ref{H-D-mod} follows because $p_{B}$ is a projection.

The proof for (d2-3) of Definition \ref{H-D-mod} is the following: Composing with the coequalizer $n_{M^{coH}}\ot B$ we obtain  that 
\begin{itemize}
\item[ ]$\hspace{0.38cm} \rho_{M^{coH}\ot_{B^{coH}} B}\co \phi_{M^{coH}\ot_{B^{coH}} B}\co  (n_{M^{coH}}\ot  B)$

\item[ ]$=  (n_{M^{coH}}\ot  H)\co (M^{coH}\ot (\rho_{B}\co \mu_{B}))$ {\scriptsize  ({\blue (\ref{phi-ten}), (\ref{rho-ten})})}

\item[ ]$=(n_{M^{coH}}\ot  H)\co (M^{coH}\ot (\mu_{B\ot H}\co (\rho_{B}\ot \rho_{B}))) $ {\scriptsize  ({\blue (\ref{chmagma})})}

\item[ ]$= (\phi_{M^{coH}\ot_{B^{coH}} B}\ot \mu_{H})\co (M^{coH}\ot_{B^{coH}} B\ot c_{H,B}\ot H)\co ((\rho_{M^{coH}\ot_{B^{coH}} B}\co n_{M^{coH}})\ot \rho_{B})$ {\scriptsize  ({\blue (\ref{phi-ten}), (\ref{rho-ten})})}

\end{itemize}
and thus  
$$\rho_{M^{coH}\ot_{B^{coH}} B}\co \phi_{M^{coH}\ot_{B^{coH}} B}=(\phi_{M^{coH}\ot_{B^{coH}} B}\ot \mu_{H})\co (M^{coH}\ot_{B^{coH}} B\ot c_{H,B}\ot H)\co (\rho_{M^{coH}\ot_{B^{coH}} B}\ot \rho_{B}).$$

Using that $-\ot H$ preserves coequalizers, the equality (d2-4) of Definition \ref{H-D-mod} follows from 
\begin{itemize}
\item[ ]$\hspace{0.38cm} \phi_{M^{coH}\ot_{B^{coH}} B}\circ (\phi_{M^{coH}\ot_{B^{coH}} B}\ot B)\co 
(n_{M^{coH}}\ot ((h\ot (h\co \lambda_{H}))\co \delta_{H}))$

\item[ ]$=  n_{M^{coH}}\co (M^{coH}\ot (\mu_{B}\circ (\mu_{B}\ot B)\co (B\ot ((h\ot (h\co \lambda_{H}))\co \delta_{H}))))$ {\scriptsize  ({\blue (\ref{phi-ten})})}

\item[ ]$=n_{M^{coH}}\co (M^{coH}\ot (\mu_{B}\co (B\ot (h\co \Pi_{H}^{L}))))$ {\scriptsize  ({\blue (c1) of Definition \ref{anchor}})}

\item[ ]$= \phi_{M^{coH}\ot_{B^{coH}} B}\co (n_{M^{coH}}\ot (h\co \Pi_{H}^{L}))$ {\scriptsize  ({\blue (\ref{phi-ten})})}.

\end{itemize}

Also, we have 
\begin{itemize}
\item[ ]$\hspace{0.38cm} \phi_{M^{coH}\ot_{B^{coH}} B}\circ ((\phi_{M^{coH}\ot_{B^{coH}} B}\co (M^{coH}\ot_{B^{coH}} B\ot (h\co \lambda_{H})))\ot h)\circ (n_{M^{coH}}\ot \delta_{H})$

\item[ ]$=  n_{M^{coH}}\co (M^{coH}\ot (\mu_{B}\circ ((\mu_{B}\co (B\ot (h\co \lambda_{H})))\ot h)\circ (B\ot \delta_{H})))$ {\scriptsize  ({\blue (\ref{phi-ten})})}

\item[ ]$=n_{M^{coH}}\co (M^{coH}\ot (\mu_{B}\co (B\ot (h\co \Pi_{H}^{R}))))$ {\scriptsize  ({\blue (c2) of Definition \ref{anchor}})}

\item[ ]$= \phi_{M^{coH}\ot_{B^{coH}} B}\co (n_{M^{coH}}\ot (h\co \Pi_{H}^{R}))$ {\scriptsize  ({\blue (\ref{phi-ten})})}, 

\end{itemize}
 and then (d2-5) of Definition \ref{H-D-mod} holds. 

Consider the morphism $\phi_{M}\co (i_{M}\ot B):M^{coH}\ot B\rightarrow M$. By (\ref{PhiBcoH}) and (d2-2) of Definition \ref{H-D-mod}, we obtain that $\phi_{M}\co (i_{M}\ot B)\co T_{M}^{1}=\phi_{M}\co (i_{M}\ot B)\co T_{M}^{2}$ and, as a consequence, there exists a unique morphism $\omega_{M}:M^{coH}\ot_{B^{coH}} B\rightarrow M$ such that 
\begin{equation}
\label{omeg}
\omega_{M}\co n_{M^{coH}}=\phi_{M}\co (i_{M}\ot B).
\end{equation}

The morphism $\omega_{M}$ is a morphism of right $H$-comodules because 
$$(\omega_{M}\ot H)\co\rho_{M^{coH}\ot_{B^{coH}} B}\co n_{M^{coH}}\stackrel{{\scriptsize \blue  (\ref{rho-ten})}}{=}((\omega_{M}\co n_{M^{coH}})\ot H)\co (M^{coH}\ot \rho_{B})
\stackrel{{\scriptsize \blue  (\ref{omeg})}}{=}(\phi_{M}\ot H)\co (i_{M}\ot \rho_{B})$$$$\stackrel{{\scriptsize \blue  (\ref{l-idem0})}}{=}\rho_{M}\co \phi_{M}\co (i_{M}\ot B)\stackrel{{\scriptsize \blue  (\ref{omeg})}}{=}\rho_{M}\co \omega_{M}\co n_{M^{coH}}$$
and then $(\omega_{M}\ot H)\co\rho_{M^{coH}\ot_{B^{coH}} B}=\rho_{M}\co \omega_{M}.$

Finally, $\omega_{M}$ is an isomorphism with inverse $\omega_{M}^{\prime}=n_{M^{coH}}\co (p_{M}\ot h)\co \rho_{M}$. Indeed, firstly note that, by (\ref{l-idem4}), we have that 
$\omega_{M}\co \omega_{M}^{\prime}=id_{M}.$ On the other hand, composing with $n_{M^{coH}}$ we have 

\begin{itemize}
\item[ ]$\hspace{0.38cm}\omega_{M}^{\prime} \co \omega_{M} \circ n_{M^{coH}}$

\item[ ]$= n_{M^{coH}}\co (p_{M}\ot h)\co \rho_{M}\co \phi_{M}\co (i_{M}\ot B) $ {\scriptsize  ({\blue (\ref{omeg})})}

\item[ ]$=  n_{M^{coH}}\co ((p_{M}\co \phi_{M}\co (M\ot q_{B}))\ot h)\co (i_{M}\ot \rho_{B})$ {\scriptsize  
({\blue (\ref{l-idem1-1})})}

\item[ ]$=n_{M^{coH}}\co (( \phi_{M^{coH}}\co (M^{coH}\ot p_{B}))\ot h)\co (M^{coH}\ot \rho_{B})  $ {\scriptsize  ({\blue (\ref{PhiBcoH1})})} 

\item[ ]$= n_{M^{coH}}\co (M^{coH}\ot (\mu_{B}\co (q_{B}\ot h)\co \rho_{B})) $ {\scriptsize  ({\blue (\ref{coeq-1})})}

\item[ ]$= n_{M^{coH}} $ {\scriptsize  ({\blue (\ref{anchor21})})}, 

\end{itemize}

and, as a consequence, $\omega_{M}^{\prime} \co \omega_{M}=id_{M^{coH}\ot_{B^{coH}} B}$.

\end{proof}

\begin{remark}
Note that in the previous proposition the existence of the comodule structure on $M^{coH}\ot_{B^{coH}} B$ does not depend on the preservation of coequalizers by the functors $-\ot B$ and $-\ot H$.

\end{remark}

\begin{proposition}
\label{isomorp}
Let $H$ be a weak Hopf quasigroup and let $(B,\rho_{B})$ be a right $H$-comodule
magma. Let $h:H\rightarrow B$ be an anchor morphism and let $(P, \phi_{P}, \rho_{P})$, $(Q, \phi_{Q}, \rho_{Q})$ be strong $(H,B,h)$-Hopf modules. If there exists a right $H$-comodule isomorphism $\omega :Q\rightarrow P$, the triple $(P,\phi_{P}^{\omega}=\omega\co \phi_{Q}\co (\omega^{-1}\ot B), \rho_{P})$,  called the $\omega$-deformation of  $(P, \phi_{P}, \rho_{P})$, is a strong $(H,B,h)$-Hopf module.
\end{proposition}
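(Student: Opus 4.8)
The plan is to transport the module structure of $(Q,\phi_{Q},\rho_{Q})$ along the comodule isomorphism $\omega$ and to check that the conjugated action $\phi_{P}^{\omega}=\omega\co \phi_{Q}\co (\omega^{-1}\ot B)$ is compatible with the unchanged coaction $\rho_{P}$. Throughout I will use only two facts: the cancellation identities $\omega\co \omega^{-1}=id_{P}$ and $\omega^{-1}\co \omega=id_{Q}$, and the comodule-morphism property of $\omega$, namely $\rho_{P}\co \omega=(\omega\ot H)\co \rho_{Q}$, equivalently (composing with $\omega^{-1}$ on both sides) $\rho_{Q}\co \omega^{-1}=(\omega^{-1}\ot H)\co \rho_{P}$.

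First, (d1) is immediate, since $\rho_{P}$ is the coaction of the strong Hopf module $(P,\phi_{P},\rho_{P})$ and is left untouched. For the axioms (d2-1), (d2-2), (d2-4) and (d2-5) I note that none of them involves the coaction: each is expressed purely through the action, the structure morphisms of $B$ and $H$, and $h$, $\lambda_{H}$, $\Pi_{H}^{L}$, $\Pi_{H}^{R}$. Hence each reduces to the corresponding axiom for $Q$ by a formal conjugation argument following always the same pattern: insert the definition of $\phi_{P}^{\omega}$ on both sides, slide the leading $\omega^{-1}$ across the tensor factors by naturality, cancel the inner $\omega^{-1}\co \omega=id_{Q}$, apply the axiom for $(Q,\phi_{Q},\rho_{Q})$, and recombine $\omega\co \phi_{Q}\co (\omega^{-1}\ot B)=\phi_{P}^{\omega}$. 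For instance, (d2-1) follows from $(\omega^{-1}\ot B)\co (P\ot \eta_{B})=(Q\ot \eta_{B})\co \omega^{-1}$ and (d2-1) for $Q$, while (d2-4) reduces, after moving $\omega^{-1}$ through $\delta_{H}$, to the identity $\phi_{Q}\co ((\phi_{Q}\co (Q\ot h))\ot (h\co \lambda_{H}))\co (Q\ot \delta_{H})=\phi_{Q}\co (Q\ot (h\co \Pi_{H}^{L}))$ precomposed with $\omega^{-1}\ot H$. The verifications of (d2-2) and (d2-5) are entirely analogous.

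The only axiom that genuinely couples the action and the coaction is (d2-3), and this is the single step where the comodule-morphism property of $\omega$ (not merely its invertibility) is required; this is the main point of the proof. I would start from $\rho_{P}\co \phi_{P}^{\omega}=(\omega\ot H)\co \rho_{Q}\co \phi_{Q}\co (\omega^{-1}\ot B)$, apply (d2-3) for $Q$ to $\rho_{Q}\co \phi_{Q}$, then rewrite $(\rho_{Q}\ot \rho_{B})\co (\omega^{-1}\ot B)=(\omega^{-1}\ot H\ot B\ot H)\co (\rho_{P}\ot \rho_{B})$ using the comodule condition for $\omega^{-1}$. After commuting $\omega^{-1}$ past $c_{H,B}$ by naturality of the braiding and using $(\omega\ot H)\co (\phi_{Q}\ot \mu_{H})\co (\omega^{-1}\ot B\ot H\ot H)=\phi_{P}^{\omega}\ot \mu_{H}$, the right-hand side $(\phi_{P}^{\omega}\ot \mu_{H})\co (P\ot c_{H,B}\ot H)\co (\rho_{P}\ot \rho_{B})$ of (d2-3) for $(P,\phi_{P}^{\omega},\rho_{P})$ emerges. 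I expect the only delicate bookkeeping to be tracking which tensor leg $\omega^{-1}$ occupies as it passes through $c_{H,B}$; everything else is cancellation of $\omega^{-1}\co \omega$. With (d1) and (d2-1)--(d2-5) verified, $(P,\phi_{P}^{\omega},\rho_{P})$ is a strong $(H,B,h)$-Hopf module, as claimed.
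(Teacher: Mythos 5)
Your proposal is correct and follows the same route as the paper: the paper's proof is exactly the observation that $\rho_{P}=(\omega\ot H)\co \rho_{Q}\co \omega^{-1}$ (the comodule-isomorphism property), after which all the axioms (d1), (d2-1)--(d2-5) follow by the formal conjugation/cancellation argument you spell out, with (d2-3) being the only place the colinearity of $\omega$ is actually needed. Your write-up simply makes explicit the verifications the paper compresses into one line.
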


\begin{proof} The proof follows easily because, if $\omega$ is a right $H$-comodule isomorphism,  $\rho_{P}=(\omega\ot H)\co \rho_{Q}\co \omega^{-1}$. 
\end{proof}

\begin{definition}
Let $H$ be a weak Hopf quasigroup and let $(B,\rho_{B})$ be a right $H$-comodule
magma. Let $h:H\rightarrow B$ be an anchor morphism. Assume that (\ref{strong-1})  and (\ref{strong2})
hold. Then if the category ${\mathcal C}$ admits coequalizers and the functors $-\ot B$ and $-\ot H$ preserve coequalizers, we define the category of strong $(H,B,h)$-Hopf modules  as the one whose objects are strong $(H,B,h)$-Hopf modules, and whose morphisms $f:M\rightarrow N$ are morphisms of right $H$-comodules and 
 $B$-quasilinear, i.e. 
\begin{equation}
\label{quasilineal}
\phi_{N}^{\omega_{N}}\co (f\ot B)=f\co \phi_{M}^{\omega_{M}},
\end{equation}
where $\omega_{M}:M^{coH}\ot_{B^{coH}} B\rightarrow M$, $\omega_{N}:N^{coH}\ot_{B^{coH}} B\rightarrow N$ are the isomorphisms of right $H$-comodules obtained in the proof of Proposition \ref{tensor}. This  category   will be  denoted by ${\mathcal S}{\mathcal M}^{H}_{B}(h)$. 
\end{definition}

\begin{example}
As  particular instances of ${\mathcal S}{\mathcal M}^{H}_{B}(h)$ we will describe in detail some  interesting examples of Hopf module categories associated  to Hopf algebras, weak Hopf algebras, Hopf quasigroups and weak Hopf quasigroups. 

\underline{Examples for Hopf algebras}: If $H$ is a Hopf algebra and $B=H$, $\rho_{B}=\delta_{H}$, $h=id_{H}$, the equalities (\ref{strong-1})  and (\ref{strong2}) hold trivially because the product on $H$ is associative. In this case the category ${\mathcal S}{\mathcal M}^{H}_{H}(id_{H})$ is the one whose objects are triples $(M, \phi_{M}, \rho_{M})$ where:
\begin{itemize}
\item[(e1)] The pair $(M, \rho_{M})$ is a right $H$-comodule.
\item[(e2)] The morphism $\phi_{M}:M\ot H\rightarrow M$ satisfies:
\begin{itemize}
\item[(e2-1)] $\phi_{M}\co (M\ot \eta_{H})=id_{M},$
\item[(e2-2)] $\rho_{M}\co \phi_{M}=(\phi_{M}\ot \mu_{H})\co (M\ot c_{H,H}\ot H)\co (\rho_{M}\ot \delta_{H})$,
\item[(e2-3)] $\phi_{M}\circ (\phi_{M}\ot \lambda_H))\circ (M\ot \delta_{H})=M\ot \varepsilon_{H},$
\item[(e2-4)] $\phi_{M}\circ ((\phi_{M}\co (M\ot \lambda_{H}))\ot H)\circ (M\ot \delta_{H})=M\ot \varepsilon_{H},$
\end{itemize}
\end{itemize}
because in this case $\Pi_{H}^{L}=\Pi_{H}^{R}=q_{H}=\eta_{H}\ot \varepsilon_{H}$, $H^{coH}=K$, and $i_{H}=\eta_{H}$. Then, if $(M, \phi_{M}, \rho_{M})$ is a classical Hopf module in the sense of Larson and Sweedler \cite{Lar-Sweed} (see also \cite{Sweedler}), we have that $(M, \phi_{M}, \rho_{M})$ is an object in  ${\mathcal S}{\mathcal M}^{H}_{H}(id_{H})$ because the identity 
\begin{equation}
\label{LS-asocH}
\phi_{M}\co (\phi_{M}\ot H)=\phi_{M}\co (M\ot \mu_{H})
\end{equation}
holds. Moreover, the morphisms of ${\mathcal S}{\mathcal M}^{H}_{H}(id_{H})$ are morphisms of right $H$-comodules and  $H$-quasilinear, i.e., satisfying (\ref{quasilineal}), where $\omega_{M}=\phi_{M}\co (i_{M}\ot H):M^{co H}\ot H\rightarrow M$ is the associated isomorphism with inverse $\omega_{M}^{-1}=(p_{M}\ot H)\co \rho_{M}$, and $\phi_{M^{coH}\ot H}=M^{coH}\ot \mu_{H}$ and $\rho_{M^{coH}\ot H}=M^{co H}\ot \delta_{H}$. Then any morphism $f:M\rightarrow N$ of right $H$-comodules and linear in the classical sense, i.e., such that $f\co \phi_{M}=\phi_{N}\co (f\ot H)$, satisfies (\ref{quasilineal}). Therefore, the Larson-Sweedler category of Hopf modules, denoted by ${\mathcal M}^{H}_{H}$, is a subcategory of ${\mathcal S}{\mathcal M}^{H}_{H}(id_{H})$. 

Also, in the Hopf algebra setting we have the following more general example: Let $(B,\rho_{B})$ be a right $H$-comodule monoid such that the functors $-\ot B$, $-\ot H$ preserve coequalizers. In this case, the equalities (\ref{strong-1})  and (\ref{strong2}) hold trivially because the product on $B$ is associative. For any $h$ multiplicative total integral, therefore an anchor morphism,  the category ${\mathcal S}{\mathcal M}^{H}_{B}(h)$ is the one whose objects are triples $(M, \phi_{M}, \rho_{M})$ where:
\begin{itemize}
\item[(f1)] The pair $(M, \rho_{M})$ is a right $H$-comodule.
\item[(f2)] The morphism $\phi_{M}:M\ot B\rightarrow M$ satisfies:
\begin{itemize}
\item[(f2-1)] $\phi_{M}\co (M\ot \eta_{B})=id_{M},$
\item[(f2-2)] $\phi_{M}\co ((\phi_{M}\co (M\ot i_{B}))\ot B)=\phi_{M}\co (M\ot (\mu_{B}\co (i_{B}\ot B))),$
\item[(f2-3)] $\rho_{M}\co \phi_{M}=(\phi_{M}\ot \mu_{H})\co (M\ot c_{H,B}\ot H)\co (\rho_{M}\ot \rho_{B})$,
\item[(f2-4)] $\phi_{M}\circ ((\phi_{M}\co (M\ot h))\ot (h\co \lambda_{H}))\co (M\ot \delta_{H})
=M\ot \varepsilon_{H},$
\item[(f2-5)] $\phi_{M}\circ ((\phi_{M}\co (M\ot (h\co \lambda_{H})))\ot h)\circ (M\ot \delta_{H})=M\ot \varepsilon_{H},$
\end{itemize}
\end{itemize}
because in this case $\Pi_{H}^{L}=\Pi_{H}^{R}=\eta_{H}\ot \varepsilon_{H}$. The morphisms in ${\mathcal S}{\mathcal M}^{H}_{B}(h)$ are morphisms of right $H$-comodules and  $B$-quasilinear where $\omega_{M}:M^{\co H}\ot_{B^{coH}} H\rightarrow M$ is the associated isomorphism of right $H$-comodules defined in the proof of Proposition \ref{tensor}. Then, if $(M, \phi_{M}, \rho_{M})$ is an $(H,B)$-Hopf module in the sense of Doi \cite{Doi83}, we have that $(M, \phi_{M}, \rho_{M})$ is an object in  ${\mathcal S}{\mathcal M}^{H}_{B}(h)$ because the identity 
\begin{equation}
\label{LS-asocB}
\phi_{M}\co (\phi_{M}\ot B)=\phi_{M}\co (M\ot \mu_{B})
\end{equation}
holds. The morphisms in  ${\mathcal S}{\mathcal M}^{H}_{B}(h)$ are colinear morphisms satisfying (\ref{quasilineal}), for the action $\phi_{M^{\co H}\ot_{B^{coH}} H}$ and the coaction $\rho_{M^{\co H}\ot_{B^{coH}} H}$ defined in the proof of Proposition \ref{tensor}. Then any morphism $f:M\rightarrow N$ of right $H$-comodules and right $B$-linear, i.e., such that $f\co \phi_{M}=\phi_{N}\co (f\ot B)$, satisfies (\ref{quasilineal}). Therefore, the category of right $(H,B)$-Hopf modules, denoted by ${\mathcal M}^{H}_{B}$, is a subcategory of ${\mathcal S}{\mathcal M}^{H}_{B}(h)$ (for all  multiplicative total integral $h$).

\underline{Examples for weak Hopf algebras}: Let $H$ be a weak Hopf algebra. Let $(B,\rho_{B})$ be a right $H$-comodule monoid such that the functors $-\ot B$, $-\ot H$ preserve coequalizers. As in the Hopf algebra setting, the equalities (\ref{strong-1})  and (\ref{strong2}) hold trivially because the product on $B$ is associative. Also, in this case,  a multiplicative total integral $h$ is  an anchor morphism because the product on $B$ is associative,  (\ref{idemp-2}) holds and $\Pi_{H}^{L}\ast id_{H}=id_{H}$.  Then, the category ${\mathcal S}{\mathcal M}^{H}_{B}(h)$ is the one whose objects are triples $(M, \phi_{M}, \rho_{M})$ where:
\begin{itemize}
\item[(g1)] The pair $(M, \rho_{M})$ is a right $H$-comodule.
\item[(g2)] The morphism $\phi_{M}:M\ot B\rightarrow M$ satisfies:
\begin{itemize}
\item[(g2-1)] $\phi_{M}\co (M\ot \eta_{B})=id_{M},$
\item[(g2-2)] $\phi_{M}\co ((\phi_{M}\co (M\ot i_{B}))\ot B)=\phi_{M}\co (M\ot (\mu_{B}\co (i_{B}\ot B))),$
\item[(g2-3)] $\rho_{M}\co \phi_{M}=(\phi_{M}\ot \mu_{H})\co (M\ot c_{H,B}\ot H)\co (\rho_{M}\ot \rho_{B})$,
\item[(g2-4)] $\phi_{M}\circ ((\phi_{M}\co (M\ot h))\ot (h\co \lambda_{H}))\co (M\ot \delta_{H})=\phi_{M}\co (M\ot (h\co \Pi_{H}^{L})),$
\item[(g2-5)] $\phi_{M}\circ ((\phi_{M}\co (M\ot (h\co \lambda_{H})))\ot h)\circ (M\ot \delta_{H})=\phi_{M}\co (M\ot (h\co \Pi_{H}^{R})).$
\end{itemize}
\end{itemize}

Then, if $(M, \phi_{M}, \rho_{M})$ is an $(H,B)$-Hopf module in the sense B\"{o}hm \cite{bohm2} (see also \cite{ZZ}), the triple $(M, \phi_{M}, \rho_{M})$ is an object in  ${\mathcal S}{\mathcal M}^{H}_{B}(h)$ because the identity (\ref{LS-asocB}) holds. 

As in the Hopf case, the morphisms in  ${\mathcal S}{\mathcal M}^{H}_{B}(h)$ are colinear morphisms satisfying (\ref{quasilineal}), for $\phi_{M^{\co H}\ot_{B^{coH}} H}$ and $\rho_{M^{\co H}\ot_{B^{coH}} H}$ the action and the coaction defined in the proof of Proposition \ref{tensor}. Then any morphism $f:M\rightarrow N$ of right $H$-comodules and right $B$-linear satisfies (\ref{quasilineal}). Therefore, as in the classical context, the category of right $(H,B)$-Hopf modules, denoted by ${\mathcal M}^{H}_{B}$, is a subcategory of ${\mathcal S}{\mathcal M}^{H}_{B}(h)$ (for all  multiplicative total integral $h$).  As a consequence, if $H=B$, $\rho_{B}=\delta_{H}$ and $h=id_{H}$, we obtain that the category ${\mathcal M}^{H}_{H}$, i.e., the category of Hopf modules associated to $H$,  is a subcategory of ${\mathcal S}{\mathcal M}^{H}_{H}(id_{H})$. Note that in this case the objects of ${\mathcal S}{\mathcal M}^{H}_{H}(id_{H})$ are triples 
$(M, \phi_{M}, \rho_{M})$ where:
\begin{itemize}
\item[(h1)] The pair $(M, \rho_{M})$ is a right $H$-comodule.
\item[(h2)] The morphism $\phi_{M}:M\ot H\rightarrow M$ satisfies:
\begin{itemize}
\item[(h2-1)] $\phi_{M}\co (M\ot \eta_{H})=id_{M},$
\item[(h2-2)] $\phi_{M}\co ((\phi_{M}\co (M\ot i_{L}))\ot H)=\phi_{M}\co (M\ot (\mu_{H}\co (i_{L}\ot H))),$
\item[(h2-3)] $\rho_{M}\co \phi_{M}=(\phi_{M}\ot \mu_{H})\co (M\ot c_{H,H}\ot H)\co (\rho_{M}\ot \delta_{H})$,
\item[(h2-4)] $\phi_{M}\circ (\phi_{M}\ot \lambda_H)\circ (M\ot \delta_{H})=\phi_{M}\co (M\ot \Pi_{H}^{L}).$
\item[(h2-5)] $\phi_{M}\circ ((\phi_{M}\co (M\ot \lambda_{H}))\ot H)\circ (M\ot \delta_{H})=\phi_{M}\co (M\ot \Pi_{H}^{R}),$
\end{itemize}
\end{itemize}
because  in this case $B^{co H}=H_{L}$ and $i_{B}=i_{L}$.

\underline{Examples for Hopf quasigroups}: The following example comes  from the nonassociative setting. Let $H$ be a Hopf quasigroup and assume that $B=H$, $\rho_{B}=\delta_{H}$, $h=id_{H}$. In this case  the equalities (\ref{strong-1})  and (\ref{strong2}) hold  because $q_{H}=\eta_{H}\ot \varepsilon_{H}$, $H^{coH}=K$, and $i_{H}=\eta_{H}$. Then the category ${\mathcal S}{\mathcal M}^{H}_{H}(id_{H})$ is the one whose objects are triples $(M, \phi_{M}, \rho_{M})$ where:
\begin{itemize}
\item[(i1)] The pair $(M, \rho_{M})$ is a right $H$-comodule.
\item[(i2)] The morphism $\phi_{M}:M\ot H\rightarrow M$ satisfies:
\begin{itemize}
\item[(i2-1)] $\phi_{M}\co (M\ot \eta_{H})=id_{M},$

\item[(i2-2)] $\rho_{M}\co \phi_{M}=(\phi_{M}\ot \mu_{H})\co (M\ot c_{H,H}\ot H)\co (\rho_{M}\ot \delta_{H}),$
\item[(i2-3)] $\phi_{M}\circ (\phi_{M}\ot (M\co\lambda_H))\circ (M\ot \delta_{H})=M\ot \varepsilon_{H},$
\item[(i2-4)] $\phi_{M}\circ (\phi_{M}\ot \lambda_{H})\circ (M\ot \delta_{H})=M\ot \varepsilon_{H}.$
\end{itemize}
\end{itemize}

Note that in this setting the equality $\phi_{M}\co ((\phi_{M}\co (M\ot i_{H}))\ot H)=\phi_{M}\co (M\ot (\mu_{H}\co (i_{H}\ot H)))$ holds because $i_{H}=\eta_{H}$. The morphisms of ${\mathcal S}{\mathcal M}^{H}_{H}(id_{H})$ are morphisms of right $H$-comodules and  $H$-quasilinear, i.e., satisfying (\ref{quasilineal}), where $\omega_{M}=\phi_{M}\co (i_{M}\ot H):M^{co H}\ot H\rightarrow M$ is the associated isomorphism with inverse $\omega_{M}^{-1}=(p_{M}\ot H)\co \rho_{M}$, and $\phi_{M^{coH}\ot H}=M^{coH}\ot \mu_{H}$ and $\rho_{M^{coH}\ot H}=M^{co H}\ot \delta_{H}$. Therefore ${\mathcal S}{\mathcal M}^{H}_{H}(id_{H})$ is the category of Hopf modules introduced by Brzezi\'nski in \cite{Brz}. 

In the previous nonassociative setting,  let $(B,\rho_{B})$ be a right $H$-comodule magma such that the functors $-\ot B$ and $-\ot H$ preserve coequalizers, and such that the equalities (\ref{strong-1})  and (\ref{strong2}) hold. For any  anchor morphism $h$,  the category ${\mathcal S}{\mathcal M}^{H}_{B}(h)$ is the one whose objects are triples $(M, \phi_{M}, \rho_{M})$ where:
\begin{itemize}
\item[(j1)] The pair $(M, \rho_{M})$ is a right $H$-comodule.
\item[(j2)] The morphism $\phi_{M}:M\ot B\rightarrow M$ satisfies:
\begin{itemize}
\item[(j2-1)] $\phi_{M}\co (M\ot \eta_{B})=id_{M},$
\item[(j2-2)] $\phi_{M}\co ((\phi_{M}\co (M\ot i_{B}))\ot B)=\phi_{M}\co (M\ot (\mu_{B}\co (i_{B}\ot B))),$
\item[(j2-3)] $\rho_{M}\co \phi_{M}=(\phi_{M}\ot \mu_{H})\co (M\ot c_{H,B}\ot H)\co (\rho_{M}\ot \rho_{B})$,
\item[(j2-4)] $\phi_{M}\circ ((\phi_{M}\co (M\ot h))\ot (h\co \lambda_{H}))\co (M\ot \delta_{H})=M\ot \varepsilon_{H},$
\item[(j2-5)] $\phi_{M}\circ ((\phi_{M}\co (M\ot (h\co \lambda_{H})))\ot h)\circ (M\ot \delta_{H})=M\ot \varepsilon_{H},$
\end{itemize}
\end{itemize}
because in this case $\Pi_{H}^{L}=\Pi_{H}^{R}=\eta_{H}\ot \varepsilon_{H}$. The morphisms of ${\mathcal S}{\mathcal M}^{H}_{B}(h)$ are morphisms of right $H$-comodules and  $B$-quasilinear where $\omega_{M}:M^{co H}\ot_{B^{coH}} H\rightarrow M$ is the associated isomorphism of right $H$-comodules defined in the proof of Proposition \ref{tensor}. 

\underline{Examples for weak Hopf quasigroups}: If $H$ is a weak Hopf quasigroup ${\mathcal S}{\mathcal M}^{H}_{H}(id_{H})$  is the category of strong  Hopf modules defined in \cite{Strong} and denoted by ${\mathcal S}{\mathcal M}^{H}_{H}$. Note that, as a consequence of (\ref{d2-7}), we obtain that 
$$\phi_{M}\circ (\phi_{M} \ot H)\co (M\ot \Pi_{H}^{L}\ot H)\circ (M\ot \delta_{H})=\phi_{M}$$
is a superfluous identity in the definition of strong  Hopf module introduced in \cite{Strong}. 
\end{example}

\begin{proposition}
\label{struc-tim-2}
Assume that  the conditions of Proposition \ref{tensor} hold. Let  $(M, \phi_{M}, \rho_{M})$ be an object in ${\mathcal S}{\mathcal M}^{H}_{B}(h)$. Let $\omega_{M}$ be the isomorphism of right $H$-comodules between $M^{coH}\ot_{B^{coH}}B$ and $M$. Then the triple $(M, \phi_{M}^{\omega_{M}}, \rho_{M})$  is a strong $(H,B,h)$-Hopf module  with the same object of coinvariants that  $(M, \phi_{M}, \rho_{M})$. Also, the identity 
\begin{equation}
\label{action-w}
\phi_{M}^{\omega_{M}}=\phi_{M}\co (q_{M}\ot (\mu_{B}\co (h\ot B)))\co (\rho_{M}\ot B)
\end{equation}
holds and 
\begin{equation}
\label{q-w}
q_{M}^{\omega_{M}}=q_{M},
\end{equation} 
where $q_{M}^{\omega_{M}}=\phi_{M}^{\omega_{M}}\co (M\ot (h\co \lambda_{H}))\co \rho_{M}$ is the idempotent morphism associated to the 
Hopf module $(M, \phi_{M}^{\omega_{M}}, \rho_{M})$. Moreover, for  $(M, \phi_{M}^{\omega_{M}}, \rho_{M})$, the associated  isomorphism of right $H$-comodules between $M^{coH}\ot_{B^{coH}}B$ and $M$ is $\omega_{M}$, and the equality  
\begin{equation}
\label{phi-square}
(\phi_{M}^{\omega_{M}})^{\omega_{M}}=
\phi_{M}^{\omega_{M}}
\end{equation}
holds. Finally,  there exists an idempotent functor  
$$D:{\mathcal S}{\mathcal M}^{H}_{B}(h)\rightarrow  {\mathcal S}{\mathcal M}^{H}_{B}(h),$$ called the  $\omega$-deformation functor, defined on objects by $D((M,\phi_{M},\rho_{M}))=(M, \phi_{M}^{\omega_{M}}, \rho_{M})$ and on morphisms by the identity.
\end{proposition}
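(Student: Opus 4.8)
The first assertion is essentially formal, so I would dispose of it immediately. Since $M^{coH}\ot_{B^{coH}}B$ is a strong $(H,B,h)$-Hopf module by Proposition \ref{tensor} and $\omega_M$ is an isomorphism of right $H$-comodules, Proposition \ref{isomorp} (applied with $Q=M^{coH}\ot_{B^{coH}}B$ and $P=M$) gives at once that $(M,\phi_M^{\omega_M},\rho_M)$ is a strong $(H,B,h)$-Hopf module. That the object of coinvariants is unchanged follows from the Remark after Proposition \ref{idem-M}: the coinvariants object is the equalizer of $\rho_M$ and $(M\ot\overline{\Pi}_H^R)\co\rho_M$, and the deformation leaves $\rho_M$ untouched, so $M^{coH}$, $i_M$ and $p_M$ are literally the same.

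Next I would establish the explicit formula (\ref{action-w}) by a short direct computation: expanding $\phi_M^{\omega_M}=\omega_M\co\phi_{M^{coH}\ot_{B^{coH}}B}\co(\omega_M^{-1}\ot B)$, substituting the inverse $\omega_M^{-1}=n_{M^{coH}}\co(p_M\ot h)\co\rho_M$, collapsing $\phi_{M^{coH}\ot_{B^{coH}}B}\co(n_{M^{coH}}\ot B)$ by (\ref{phi-ten}), rewriting $\omega_M\co n_{M^{coH}}$ by (\ref{omeg}), and using $q_M=i_M\co p_M$. From (\ref{action-w}), the multiplicativity of $h$ and $\Pi_H^L=id_H\ast\lambda_H$, the idempotent of the deformed module reduces to $q_M^{\omega_M}=\phi_M\co(q_M\ot(h\co\Pi_H^L))\co\rho_M$.

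The hard part will be to show this last expression equals $q_M$, and the companion fact that $\phi_M^{\omega_M}$ and $\phi_M$ agree on the coinvariant part, $\phi_M^{\omega_M}\co(i_M\ot B)=\phi_M\co(i_M\ot B)$. For the latter the key observation is that the $H$-component of a coinvariant lies in $H_L$, so by (\ref{anchor1}), $h\co\Pi_H^L=q_B\co h$, the value of $h$ on it is a coinvariant of $B$; this is exactly what makes (d2-2) of Definition \ref{H-D-mod} applicable, after which (\ref{l-idem4}) recovers $i_M$. For (\ref{q-w}) itself I would exploit the colinearity of $\omega_M$ in the sharpened form $(\phi_M\co(q_M\ot h)\ot H)\co(M\ot\delta_H)\co\rho_M=\rho_M$, which is the unpacking of $(\omega_M\ot H)\co\rho_{M^{coH}\ot_{B^{coH}}B}\co\omega_M^{-1}=\rho_M$ via (\ref{rho-ten}), (\ref{omeg}) and the colinearity of $h$; combined with (d2-4) of Definition \ref{H-D-mod}, the expansion of $q_M^{\omega_M}$ then telescopes back to $q_M=\phi_M\co(M\ot(h\co\lambda_H))\co\rho_M$. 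I expect this bookkeeping, and especially the verification that the relevant $B$-elements are coinvariant so that (d2-2) fires, to be the delicate step.

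The remaining claims then follow formally. Because $q_M^{\omega_M}=q_M$, the coinvariants data of the deformed module coincide with those of the original, and since $\phi_M^{\omega_M}\co(i_M\ot B)=\phi_M\co(i_M\ot B)$ the canonical comodule isomorphism $\tilde\omega$ of the deformed module satisfies $\tilde\omega\co n_{M^{coH}}=\phi_M^{\omega_M}\co(i_M\ot B)=\omega_M\co n_{M^{coH}}$, whence $\tilde\omega=\omega_M$ as $n_{M^{coH}}$ is epi. For (\ref{phi-square}) I would apply (\ref{action-w}) to the deformed module, obtaining $(\phi_M^{\omega_M})^{\omega_M}=\phi_M^{\omega_M}\co(q_M\ot(\mu_B\co(h\ot B)))\co(\rho_M\ot B)$; since $\phi_M^{\omega_M}\co(q_M\ot B)=\phi_M\co(q_M\ot B)$ (again by agreement on coinvariants), this collapses to $\phi_M^{\omega_M}$ by (\ref{action-w}). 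Finally I set $D(M,\phi_M,\rho_M)=(M,\phi_M^{\omega_M},\rho_M)$ and $D(f)=f$; it is well defined on morphisms because (\ref{phi-square}) turns the $B$-quasilinearity (\ref{quasilineal}) for the deformed modules into the $B$-quasilinearity of $f$ for the original ones, and $D\co D=D$ is precisely (\ref{phi-square}).
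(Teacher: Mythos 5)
Your proposal is correct and follows essentially the same route as the paper's proof: Proposition \ref{isomorp} gives the strong Hopf module structure, (\ref{action-w}) comes from (\ref{phi-ten}) and (\ref{omeg}), and (\ref{q-w}) is obtained by the same telescoping chain (multiplicativity of $h$, (d2-4) of Definition \ref{H-D-mod}, comodule coassociativity, and (\ref{l-idem4})). The only organizational difference is that you establish $\phi_{M}^{\omega_{M}}\co (i_{M}\ot B)=\phi_{M}\co (i_{M}\ot B)$ inside the proof and deduce (\ref{phi-square}) and the identification of the deformed module's canonical isomorphism from it, whereas the paper proves (\ref{phi-square}) by a direct computation with exactly the same ingredients ((\ref{idemp-0M}), (\ref{anchor1}), (d2-2), (\ref{l-idem4})) and defers that identity to the Lemma stated immediately afterwards as (\ref{iphi}); both versions are sound.
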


\begin{proof} By Proposition \ref{isomorp}, $(M, \phi_{M}^{\omega_{M}}, \rho_{M})$ is a strong $(H,B,h)$-Hopf module. Moreover, note that (\ref{action-w}) follows by (\ref{phi-ten}) and (\ref{omeg}). Then $q_{M}^{\omega_{M}}=q_{M}$ because 

\begin{itemize}
\item[ ]$\hspace{0.38cm} q_{M}^{\omega_{M}}$

\item[ ]$=\phi_{M}\co (q_{M}\ot (\mu_{B}\co (h\ot h)))\co (\rho_{M}\ot \lambda_{H})\co \rho_{M}  $ {\scriptsize  ({\blue (\ref{action-w})})}

\item[ ]$= \phi_{M}\co (q_{M}\ot (h\co \Pi_{H}^{L}))\co \rho_{M} $ {\scriptsize  ({\blue comodule condition for $M$ and multiplicative condition for $h$})}

\item[ ]$= \phi_{M}\circ (\phi_{M}\ot B)\co (q_{M}\ot h\ot (h\co \lambda_{H}))\co (M\ot \delta_{H})\co \rho_{M} $ {\scriptsize  ({\blue (d2-4) of Definition \ref{H-D-mod}})} 

\item[ ]$= \phi_{M}\co ((\phi_{M}\co (q_{M}\ot h)\co \rho_{M})\ot (h\co \lambda_{H}))\co \rho_{M} $ {\scriptsize  ({\blue comodule condition for $M$})}

\item[ ]$= q_{M} ${\scriptsize  ({\blue (\ref{l-idem4})})}. 

\end{itemize}

Therefore, $(M, \phi_{M}^{\omega_{M}}, \rho_{M})$  has the same object of coinvariants that $(M, \phi_{M}, \rho_{M})$. On the other hand, 

\begin{itemize}
\item[ ]$\hspace{0.38cm} (\phi_{M}^{\omega_{M}})^{\omega_{M}} $

\item[ ]$=  \phi_{M}\co (q_{M}\ot \mu_{B})\co (((M\ot h)\co \rho_{M}\co q_{M})\ot (\mu_{B}\co (h\ot B)))\co (\rho_{M}\ot B) $ {\scriptsize  ({\blue (\ref{action-w})})}

\item[ ]$=  \phi_{M}\co (q_{M}\ot \mu_{B})\co (((M\ot (h\co \Pi_{H}^{L}))\co \rho_{M}\co q_{M})\ot (\mu_{B}\co (h\ot B)))\co (\rho_{M}\ot B) $ {\scriptsize  ({\blue (\ref{idemp-0M})})}

\item[ ]$=\phi_{M}\co (q_{M}\ot \mu_{B})\co (((M\ot (q_{B}\co h))\co \rho_{M}\co q_{M})\ot (\mu_{B}\co (h\ot B)))\co (\rho_{M}\ot B)    $ {\scriptsize  ({\blue (\ref{anchor1})})}

\item[ ]$= \phi_{M}\co ((\phi_{M}\co (q_{M}\ot (q_{B}\co h))\co \rho_{M}\co q_{M})\ot (\mu_{B}\co (h\ot B)))\co (\rho_{M}\ot B)   $ {\scriptsize  ({\blue (d2-2) of Definition \ref{H-D-mod}})} 

\item[ ]$= \phi_{M}\co ((\phi_{M}\co (q_{M}\ot (h\co \Pi_{H}^{L}))\co \rho_{M}\co q_{M})\ot (\mu_{B}\co (h\ot B)))\co (\rho_{M}\ot B)   $ {\scriptsize  ({\blue (\ref{anchor1})})}

\item[ ]$=\phi_{M}\co ((\phi_{M}\co (q_{M}\ot h)\co \rho_{M}\co q_{M})\ot (\mu_{B}\co (h\ot B)))\co (\rho_{M}\ot B)    $ {\scriptsize  ({\blue (\ref{idemp-0M})})}

\item[ ]$=\phi_{M}^{\omega_{M}}$ {\scriptsize  ({\blue (\ref{l-idem4})})}. 

\end{itemize}

Finally, by (\ref{q-w}) and (\ref{phi-square}), it is easy to show that  $D$  is a well defined idempotent endofunctor.
\end{proof}

\begin{lemma}
Assume that  the conditions of Proposition \ref{tensor} hold. Let  $(M, \phi_{M}, \rho_{M})$ be an object in ${\mathcal S}{\mathcal M}^{H}_{B}(h)$. The following identity holds:
\begin{equation}
\label{iphi}
\phi_{M}^{\omega_{M}}\co (i_{M}\ot B)=\phi_{M}\co (i_{M}\ot B).
\end{equation}
\end{lemma}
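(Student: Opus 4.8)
The plan is to work entirely through the comodule isomorphism $\omega_{M}:M^{coH}\ot_{B^{coH}}B\rightarrow M$ and the coequalizer morphism $n_{M^{coH}}$, avoiding the explicit formula (\ref{action-w}) altogether. By Proposition \ref{isomorp}, applied with $Q=M^{coH}\ot_{B^{coH}}B$ carrying $\phi_{M^{coH}\ot_{B^{coH}}B}$, with $P=M$ and $\omega=\omega_{M}$, the deformed action is $\phi_{M}^{\omega_{M}}=\omega_{M}\co\phi_{M^{coH}\ot_{B^{coH}}B}\co(\omega_{M}^{-1}\ot B)$. Precomposing with $i_{M}\ot B$ gives
$$\phi_{M}^{\omega_{M}}\co(i_{M}\ot B)=\omega_{M}\co\phi_{M^{coH}\ot_{B^{coH}}B}\co((\omega_{M}^{-1}\co i_{M})\ot B),$$
so the whole statement reduces to identifying the morphism $\omega_{M}^{-1}\co i_{M}:M^{coH}\rightarrow M^{coH}\ot_{B^{coH}}B$.

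The key auxiliary step I would establish is the identity $\omega_{M}^{-1}\co i_{M}=n_{M^{coH}}\co(M^{coH}\ot \eta_{B})$. Since $\omega_{M}$ is an isomorphism, hence a monomorphism, it suffices to check this after postcomposing with $\omega_{M}$. On one side $\omega_{M}\co(\omega_{M}^{-1}\co i_{M})=i_{M}$. On the other side, using $\omega_{M}\co n_{M^{coH}}=\phi_{M}\co(i_{M}\ot B)$ from (\ref{omeg}), then (d2-1) of Definition \ref{H-D-mod} together with the unit property of $\eta_{B}$, one gets $\omega_{M}\co n_{M^{coH}}\co(M^{coH}\ot \eta_{B})=\phi_{M}\co(i_{M}\ot \eta_{B})=\phi_{M}\co(M\ot \eta_{B})\co i_{M}=i_{M}$. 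Both composites equal $i_{M}$, so the auxiliary identity follows by cancelling $\omega_{M}$.

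With this in hand I would finish by direct substitution and regrouping:
$$\phi_{M}^{\omega_{M}}\co(i_{M}\ot B)=\omega_{M}\co\phi_{M^{coH}\ot_{B^{coH}}B}\co(n_{M^{coH}}\ot B)\co((M^{coH}\ot \eta_{B})\ot B)=\omega_{M}\co n_{M^{coH}}\co(M^{coH}\ot\mu_{B})\co((M^{coH}\ot\eta_{B})\ot B),$$
where the second equality is exactly (\ref{phi-ten}). The unit axiom $\mu_{B}\co(\eta_{B}\ot B)=id_{B}$ collapses the tail to the identity, leaving $\omega_{M}\co n_{M^{coH}}$, which equals $\phi_{M}\co(i_{M}\ot B)$ again by (\ref{omeg}). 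This is precisely (\ref{iphi}).

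The only genuine obstacle is the auxiliary identity $\omega_{M}^{-1}\co i_{M}=n_{M^{coH}}\co(M^{coH}\ot\eta_{B})$; once one observes that $\omega_{M}$ can be cancelled because it is invertible, the verification is immediate from (\ref{omeg}) and (d2-1), and the rest is a formal manipulation of the coequalizer relation (\ref{phi-ten}) and the unit constraint. A more computational route would instead start from the explicit expression (\ref{action-w}) for $\phi_{M}^{\omega_{M}}$ and the coinvariance relation $\rho_{M}\co i_{M}=(M\ot\Pi_{H}^{L})\co\rho_{M}\co i_{M}$, rewriting $h\co\Pi_{H}^{L}$ via (\ref{anchor1}) and invoking the computation behind (\ref{q-w}); I expect that path to be noticeably longer and more error-prone, so I would favour the cancellation argument above.
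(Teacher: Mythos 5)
Your proof is correct, and it takes a genuinely different route from the paper's. The paper argues computationally from the explicit formula (\ref{action-w}): it precomposes with $i_{M}\ot B$, uses (\ref{idemp-0M}) to insert $\Pi_{H}^{L}$ into the coaction leg, converts $h\co \Pi_{H}^{L}$ into $q_{B}\co h$ by (\ref{anchor1}), re-associates with (d2-2) of Definition \ref{H-D-mod}, undoes the two substitutions, and closes with (\ref{l-idem4}) --- exactly the ``more computational route'' you predicted and declined to follow. You instead work from the definitional form $\phi_{M}^{\omega_{M}}=\omega_{M}\co \phi_{M^{coH}\ot_{B^{coH}}B}\co (\omega_{M}^{-1}\ot B)$ of Proposition \ref{isomorp}, and your whole argument rests on the single auxiliary identity $\omega_{M}^{-1}\co i_{M}=n_{M^{coH}}\co (M^{coH}\ot \eta_{B})$, proved by left-cancelling the isomorphism $\omega_{M}$ against (\ref{omeg}) and (d2-1); after that, (\ref{phi-ten}) and the unit axiom for $\mu_{B}$ finish the computation. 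Your route is shorter and conceptually cleaner: it needs none of the weak-Hopf-quasigroup-specific identities ((\ref{idemp-0M}), (\ref{anchor1}), (\ref{l-idem4})), only the coequalizer relations defining $\phi_{M^{coH}\ot_{B^{coH}}B}$ and $\omega_{M}$, and it makes visible \emph{why} the lemma holds: under $\omega_{M}$, the morphism $i_{M}$ corresponds to the class of $M^{coH}\ot \eta_{B}$ in $M^{coH}\ot_{B^{coH}}B$, and on such classes the deformed and undeformed actions coincide by construction. What the paper's computation buys is uniformity --- it recycles the same stock of identities used throughout Proposition \ref{struc-tim-2} and Theorem \ref{p-paper} and never needs to manipulate $\omega_{M}^{-1}$ --- but since (\ref{action-w}) is itself derived from (\ref{phi-ten}) and (\ref{omeg}), both proofs rest on the same foundations and yours reaches the conclusion faster. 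It is also worth noting that your auxiliary identity is, up to composing with $\omega_{M}$, the relation (\ref{unit}) defining the unit of the adjunction in Theorem \ref{p-paper}, so nothing you use is foreign to the paper's framework.
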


\begin{proof} Indeed, 

\begin{itemize}
\item[ ]$\hspace{0.38cm} \phi_{M}^{\omega_{M}}\co (i_{M}\ot B) $

\item[ ]$= \phi_{M}\co (q_{M}\ot \mu_{B})\co (((M\ot (h\co \Pi_{H}^{L}))\co\rho_{M}\co i_{M})\ot B)  $ {\scriptsize  ({\blue (\ref{idemp-0M})})}

\item[ ]$= \phi_{M}\co (q_{M}\ot \mu_{B})\co (((M\ot (q_{B}\co h))\co\rho_{M}\co i_{M})\ot B)   $ {\scriptsize  ({\blue (\ref{anchor1})})}

\item[ ]$= \phi_{M}\co ((\phi_{M}\co (q_{M}\ot (q_{B}\co h))\co\rho_{M}\co i_{M})\ot B)   $ {\scriptsize  ({\blue (d2-2) of Definition \ref{H-D-mod}})}

\item[ ]$=\phi_{M}\co ((\phi_{M}\co (q_{M}\ot (h\co \Pi_{H}^{L}))\co\rho_{M}\co i_{M})\ot B)    $ {\scriptsize  ({\blue (\ref{anchor1})})} 

\item[ ]$= \phi_{M}\co ((\phi_{M}\co (q_{M}\ot h)\co\rho_{M}\co i_{M})\ot B)  $ {\scriptsize  ({\blue (\ref{idemp-0M})})}

\item[ ]$=\phi_{M}\co (i_{M}\ot B)$ {\scriptsize  ({\blue (\ref{l-idem4})})}. 

\end{itemize}

\end{proof}

\begin{proposition}
\label{inv-defor}
Assume that  the conditions of Proposition \ref{tensor} hold. For any object  $(M, \phi_{M}, \rho_{M})$  in ${\mathcal S}{\mathcal M}^{H}_{B}(h)$, the  strong $(H,B,h)$-Hopf module $(M^{coH}\ot_{B^{coH}}B, \phi_{M^{coH}\ot_{B^{coH}}B}, \rho_{M^{coH}\ot_{B^{coH}}B})$, constructed in Proposition \ref{tensor}, is invariant for the  $\omega$-deformation functor, i.e., 
$$D((M^{coH}\ot_{B^{coH}}B, \phi_{M^{coH}\ot_{B^{coH}}B}, \rho_{M^{coH}\ot_{B^{coH}}B}))=(M^{coH}\ot_{B^{coH}}B, \phi_{M^{coH}\ot_{B^{coH}}B}, \rho_{M^{coH}\ot_{B^{coH}}B}).$$
\end{proposition}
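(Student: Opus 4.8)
The plan is to transport the $D$-invariance of the deformed module $D(M)=(M,\phi_M^{\omega_M},\rho_M)$, which Proposition \ref{struc-tim-2} already furnishes (it is exactly equation \ref{phi-square}), along the canonical isomorphism $\omega_M$. Write $N:=M^{coH}\ot_{B^{coH}}B$ with the structure $\phi_N:=\phi_{M^{coH}\ot_{B^{coH}}B}$, $\rho_N:=\rho_{M^{coH}\ot_{B^{coH}}B}$ of Proposition \ref{tensor}. Applying Proposition \ref{isomorp} with $Q=N$, $P=M$, $\omega=\omega_M$ gives $\phi_M^{\omega_M}=\omega_M\co\phi_N\co(\omega_M^{-1}\ot B)$, hence
$$\phi_M^{\omega_M}\co(\omega_M\ot B)=\omega_M\co\phi_N.$$
Thus $\omega_M$ is a \emph{strict} isomorphism of Hopf modules from $(N,\phi_N,\rho_N)$ to $(M,\phi_M^{\omega_M},\rho_M)$: it is $H$-colinear and intertwines the undeformed action $\phi_N$ with $\phi_M^{\omega_M}$. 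Since the claim $D(N)=N$ means precisely $\phi_N^{\omega_N}=\phi_N$, it suffices to compare the two deformed actions through $\omega_M$.

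The key step is an auxiliary compatibility lemma: if $g\colon(P,\phi_P,\rho_P)\to(R,\phi_R,\rho_R)$ is a right $H$-comodule morphism between objects of ${\mathcal S}{\mathcal M}^{H}_{B}(h)$ that strictly intertwines the actions, $\phi_R\co(g\ot B)=g\co\phi_P$, then it also intertwines the deformed ones, $\phi_R^{\omega_R}\co(g\ot B)=g\co\phi_P^{\omega_P}$. First I would check $q_R\co g=g\co q_P$: starting from $q_R=\phi_R\co(R\ot(h\co\lambda_H))\co\rho_R$, use $H$-colinearity of $g$ to move it past $\rho_R$, then strict intertwining to move it past $\phi_R$, arriving at $g\co q_P$. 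With this identity, I would expand both sides using the explicit formula \ref{action-w}, namely $\phi_X^{\omega_X}=\phi_X\co(q_X\ot(\mu_B\co(h\ot B)))\co(\rho_X\ot B)$, push $g\ot B$ rightward through the coaction by colinearity, replace $q_R\co g$ by $g\co q_P$, and finally absorb $g$ on the left by strict intertwining. This is a short diagram chase and is the only genuine computation.

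Finally I would apply the lemma with $P=(N,\phi_N,\rho_N)$, $R=(M,\phi_M^{\omega_M},\rho_M)$ and $g=\omega_M$. Here Proposition \ref{struc-tim-2} supplies exactly the identification of the right-hand side: the idempotent of $R$ is $q_M^{\omega_M}=q_M$ (equation \ref{q-w}), its canonical comodule isomorphism is again $\omega_R=\omega_M$, and $\phi_R^{\omega_R}=(\phi_M^{\omega_M})^{\omega_M}=\phi_M^{\omega_M}$ (equation \ref{phi-square}). The lemma then yields $\phi_M^{\omega_M}\co(\omega_M\ot B)=\omega_M\co\phi_N^{\omega_N}$, and comparing with the displayed identity of the first paragraph gives $\omega_M\co\phi_N=\omega_M\co\phi_N^{\omega_N}$. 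Cancelling the isomorphism $\omega_M$ produces $\phi_N^{\omega_N}=\phi_N$, that is $D(N)=N$. I expect the main obstacle to be conceptual rather than computational: one must resist verifying $\phi_N^{\omega_N}=\phi_N$ directly through the coequalizer $n_{M^{coH}}$, since that route forces the identity $\mu_B\co(q_B\ot(\mu_B\co(h\ot B)))\co(\rho_B\ot B)=\mu_B$, which would follow from \ref{anchor21} only if $B$ were associative and thus fails on the nose here; routing the argument instead through the strict isomorphism $\omega_M$ and the already established invariance of $D(M)$ sidesteps this entirely.
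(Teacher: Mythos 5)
Your proof is correct, but it takes a genuinely different route from the paper's. The paper proves $\phi_{M^{coH}\ot_{B^{coH}}B}^{\omega_{M^{coH}\ot_{B^{coH}}B}}=\phi_{M^{coH}\ot_{B^{coH}}B}$ by exactly the direct coequalizer computation you decided to avoid: composing with $n_{M^{coH}}\ot B$ and using (\ref{rho-ten}) and (\ref{phi-ten}), the claim reduces to the identity $\mu_{B}\co (q_{B}\ot (\mu_{B}\co (h\ot B)))\co (\rho_{B}\ot B)=\mu_{B}$, which the paper then verifies in two steps. Your alternative --- first proving the auxiliary lemma that an $H$-colinear morphism $g$ which strictly intertwines the undeformed actions also satisfies $q_R\co g=g\co q_P$ and hence, by formula (\ref{action-w}), intertwines the deformed actions; then applying it to $g=\omega_M\colon N\rightarrow D(M)$, where $N=M^{coH}\ot_{B^{coH}}B$, and invoking (\ref{q-w}), (\ref{phi-square}) and the identification $\omega_{D(M)}=\omega_M$ from Proposition \ref{struc-tim-2} --- is a valid argument with no circularity, since every ingredient you cite is established before Proposition \ref{inv-defor}. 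It trades the explicit computation for a reusable naturality statement (deformation commutes with strict morphisms of Hopf modules), at the cost of being longer than the paper's three-line chase.

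However, your closing remark is wrong, and it matters because it mischaracterizes the paper's own proof. You claim the direct route forces the identity $\mu_B\co(q_B\ot(\mu_B\co(h\ot B)))\co(\rho_B\ot B)=\mu_B$, and that this ``fails on the nose'' without associativity of $B$. In fact this identity holds under the standing hypotheses: since $q_B=i_B\co p_B$, the partial associativity condition (\ref{strong2}) (assumed in Proposition \ref{tensor}) yields
$$\mu_{B}\co (q_{B}\ot (\mu_{B}\co (h\ot B)))\co (\rho_{B}\ot B)=\mu_{B}\co \bigl((\mu_{B}\co (q_{B}\ot h)\co \rho_{B})\ot B\bigr),$$
and (\ref{anchor21}) then gives $\mu_B$. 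No global associativity is needed, only associativity against factors in the image of $i_B$, which is precisely what (\ref{strong2}) provides. So the obstacle you constructed your proof to sidestep does not exist; the direct computation is the paper's proof and goes through.
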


\begin{proof} To prove the proposition we only need to show that 
\begin{equation}
\label{w-MB-inv}
\phi_{M^{coH}\ot_{B^{coH}}B}^{\omega_{M^{coH}\ot_{B^{coH}}B}}=\phi_{M^{coH}\ot_{B^{coH}}B}. 
\end{equation}

Indeed, composing with the coequalizer $n_{M^{coH}}\ot B$ we have 
$$\phi_{M^{coH}\ot_{B^{coH}}B}^{\omega_{M^{coH}\ot_{B^{coH}}B}}\co (n_{M^{coH}}\ot B)\stackrel{{\scriptsize \blue  (\ref{rho-ten}), (\ref{phi-ten})}}{=} n_{M^{coH}}\co (M^{coH}\ot (\mu_{B}\co (q_{B}\ot (\mu_{B}\co (h\ot B)))\co (\rho_{B}\ot B)))$$
$$\stackrel{{\scriptsize \blue  (\ref{strong2})}}{=} n_{M^{coH}}\co (M^{coH}\ot 
(\mu_{B}\co ((\mu_{B}\co (q_{B}\ot h)\co \rho_{B})\ot B)))\stackrel{{\scriptsize \blue  (\ref{anchor21})}}{=} n_{M^{coH}}\co (M^{coH}\ot \mu_{B})\stackrel{{\scriptsize \blue  (\ref{phi-ten})}}{=}\phi_{M^{coH}\ot_{B^{coH}}B}\co (n_{M^{coH}}\ot B).$$
Therefore, (\ref{w-MB-inv}) holds.
\end{proof}

The following result is the nonassociative general version of the Fundamental Theorem of Hopf Modules. The proof follows by  the properties of the morphism $\omega_{M}$, obtained in Proposition \ref{tensor}, and by (\ref{w-MB-inv}).

\begin{theorem} ({\rm Fundamental Theorem of Hopf modules})
\label{main0}
Assume that  the conditions of Proposition \ref{tensor} hold. Let  $(M, \phi_{M}, \rho_{M})$ be an object in  ${\mathcal S}{\mathcal M}^{H}_{B}(h)$.  The objects $(M^{coH}\ot_{B^{coH}}B, \phi_{M^{coH}\ot_{B^{coH}}B}, \rho_{M^{coH}\ot_{B^{coH}}B})$ and $(M, \phi_{M}, \rho_{M})$ are isomorphic in ${\mathcal S}{\mathcal M}^{H}_{B}(h)$.
\end{theorem}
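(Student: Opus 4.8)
The plan is to promote the right $H$-comodule isomorphism $\omega_{M}\co M^{coH}\ot_{B^{coH}}B\rightarrow M$ built in Proposition \ref{tensor} to a genuine isomorphism in ${\mathcal S}{\mathcal M}^{H}_{B}(h)$. Write $P=(M^{coH}\ot_{B^{coH}}B, \phi_{M^{coH}\ot_{B^{coH}}B}, \rho_{M^{coH}\ot_{B^{coH}}B})$ for the strong $(H,B,h)$-Hopf module constructed there. Since Proposition \ref{tensor} already gives that $\omega_{M}$ is an isomorphism of right $H$-comodules, the only thing left to check is that $\omega_{M}$, viewed as a morphism $P\rightarrow M$, and its inverse are $B$-quasilinear in the sense of (\ref{quasilineal}); all the analytic work has been absorbed into Propositions \ref{tensor} and \ref{inv-defor}.

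Concretely, for $\omega_{M}\co P\rightarrow M$ condition (\ref{quasilineal}) reads $\phi_{M}^{\omega_{M}}\co (\omega_{M}\ot B)=\omega_{M}\co \phi_{P}^{\omega_{P}}$, where $\omega_{P}$ is the canonical comodule isomorphism attached to $P$. First I would compute the left-hand side using the very definition of the $\omega_{M}$-deformation from Proposition \ref{isomorp}, namely $\phi_{M}^{\omega_{M}}=\omega_{M}\co \phi_{M^{coH}\ot_{B^{coH}}B}\co (\omega_{M}^{-1}\ot B)$. Substituting and cancelling $(\omega_{M}^{-1}\ot B)\co (\omega_{M}\ot B)=id$ by the interchange law gives $\phi_{M}^{\omega_{M}}\co (\omega_{M}\ot B)=\omega_{M}\co \phi_{M^{coH}\ot_{B^{coH}}B}$.

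For the right-hand side, the crucial input is Proposition \ref{inv-defor}: the object $P$ is invariant under the deformation functor $D$, so by (\ref{w-MB-inv}) one has $\phi_{P}^{\omega_{P}}=\phi_{M^{coH}\ot_{B^{coH}}B}$. Hence $\omega_{M}\co \phi_{P}^{\omega_{P}}=\omega_{M}\co \phi_{M^{coH}\ot_{B^{coH}}B}$ as well, and the two sides of (\ref{quasilineal}) coincide, proving that $\omega_{M}$ is $B$-quasilinear. For the inverse, composing the quasilinearity identity on the left with $\omega_{M}^{-1}$ and on the right with $\omega_{M}^{-1}\ot B$ yields $\omega_{M}^{-1}\co \phi_{M}^{\omega_{M}}=\phi_{P}^{\omega_{P}}\co (\omega_{M}^{-1}\ot B)$, which is exactly (\ref{quasilineal}) for $\omega_{M}^{-1}\co M\rightarrow P$; together with the fact that $\omega_{M}^{-1}$ is a comodule morphism this shows $\omega_{M}^{-1}$ is a morphism in ${\mathcal S}{\mathcal M}^{H}_{B}(h)$. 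Consequently $\omega_{M}$ is an isomorphism in ${\mathcal S}{\mathcal M}^{H}_{B}(h)$ and the two objects are isomorphic there.

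I expect the genuinely delicate points to lie not in this final argument but in the earlier propositions: the real obstacle has already been dealt with in Proposition \ref{tensor} (exhibiting $\omega_{M}$ together with its explicit inverse $\omega_{M}^{\prime}=n_{M^{coH}}\co (p_{M}\ot h)\co \rho_{M}$, and verifying the comodule-morphism property) and in Proposition \ref{inv-defor} (the $D$-invariance of $P$). At the level of this theorem the only subtlety is the bookkeeping of deformed actions: one must remember that (\ref{quasilineal}) compares the deformed action $\phi_{M}^{\omega_{M}}$ on the target with the deformed action $\phi_{P}^{\omega_{P}}$ on the source, and recognise that the $D$-invariance of $P$ is precisely what collapses the latter to $\phi_{M^{coH}\ot_{B^{coH}}B}$, making both sides agree.
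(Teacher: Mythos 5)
Your proposal is correct and follows essentially the same route as the paper: the paper's own (one-line) proof invokes precisely the properties of $\omega_{M}$ from Proposition \ref{tensor} together with the $D$-invariance identity (\ref{w-MB-inv}) of Proposition \ref{inv-defor}, which is exactly how you establish the $B$-quasilinearity (\ref{quasilineal}) of $\omega_{M}$ and of its inverse. Your write-up simply makes explicit the cancellation $\phi_{M}^{\omega_{M}}\co (\omega_{M}\ot B)=\omega_{M}\co \phi_{M^{coH}\ot_{B^{coH}}B}=\omega_{M}\co \phi_{P}^{\omega_{P}}$ that the paper leaves to the reader.
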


The previous theorem is a generalization of the one proved by Sweedler \cite{Sweedler} for Hopf modules over an ordinary Hopf algebra. It also contains the Fundamental Theorem of relative Hopf modules (or $(H,B)$-Hopf modules, or Doi-Hopf modules) given by Doi and Takeuchi in \cite{Doi-Take}. On the other hand, in the weak setting, Theorem \ref{main0} is a generalization of the one obtained by B\"{o}hm, Nill and  Szlach\'anyi \cite{bohm}, for Hopf modules over a weak Hopf algebra $H$,  and the one proved by Zhang and Zhu \cite{ZZ}, for $(H,B)$-Hopf modules associated to a weak right $H$-comodule algebra $B$. Moreover, in the nonassociative context,  it generalizes the result obtained by Brzezi\'nski \cite{Brz} for Hopf modules associated to a  Hopf quasigroup. Finally, for weak Hopf quasigroups, Theorem \ref{main0} is a generalization of the Fundamental Theorem of Hopf modules proved in \cite{Asian} (see also \cite{Strong}).

\section{Categorical equivalences for strong $(H,B,h)$-Hopf modules}

As for prerequisites, in this section we will assume that  the conditions of Proposition \ref{tensor} hold. Then, in the following $H$ is a weak Hopf quasigroup and  $(B,\rho_{B})$ is a right $H$-comodule
magma. Also,  $h:H\rightarrow B$ is an anchor morphism such that (\ref{strong-1})  and (\ref{strong2})
hold. Finally, the category ${\mathcal C}$ admits coequalizers and the functors $-\ot B$ and $-\ot H$ preserve coequalizers. With ${\mathcal C}_{B^{coH}}$ we will denote the category of right $B^{coH}$-modules.

The main target of this section is to prove that there exists an equivalence between ${\mathcal C}_{B^{coH}}$  and the category of strong $(H,B,h)$-Hopf modules. 

 Let $(N,\psi_{N})$ be an object in ${\mathcal C}_{B^{coH}}$ and consider the coequalizer diagram 

\begin{equation}
\label{coeqN-1}
\setlength{\unitlength}{1mm}
\begin{picture}(101.00,10.00)
\put(20.00,8.00){\vector(1,0){25.00}}
\put(20.00,4.00){\vector(1,0){25.00}}
\put(62.00,6.00){\vector(1,0){21.00}}
\put(32.00,11.00){\makebox(0,0)[cc]{$\psi_{N}\ot B$ }}
\put(33.00,0.5){\makebox(0,0)[cc]{$N\ot (\mu_{B}\co (i_{B}\ot B))$}} 
\put(73.00,9.00){\makebox(0,0)[cc]{$n_{N}$ }}
\put(7.00,6.00){\makebox(0,0)[cc]{$ N\otimes B^{coH}\ot B$ }}
\put(54.00,6.00){\makebox(0,0)[cc]{$ N\ot B$ }}
\put(96.00,6.00){\makebox(0,0)[cc]{$N\ot_{B^{coH}} B. $ }}
\end{picture}
\end{equation}

Then, 
$$(n_{N}\ot B)\co (\psi_{N}\ot \rho_{B})\stackrel{{\scriptsize \blue  (\ref{coeqN-1})}}{=}(n_{N}\ot B)\co (N\ot ((\mu_{B}\ot B)\co (i_{B}\ot \rho_{B})))\stackrel{{\scriptsize \blue  (\ref{l-idem0})}}{=}
(n_{N}\ot B)\co (N\ot (\rho_{B}\co (\mu_{B}\co (i_{B}\ot B))))$$ 
and, as a consequence, there exists a unique morphism $\rho_{N\ot_{B^{coH}} B}:N\ot_{B^{coH}} B\rightarrow N\ot_{B^{coH}} B\ot H$ such that 
\begin{equation}
\label{comodN}
\rho_{N\ot_{B^{coH}} B}\co n_{N}=(n_{N}\ot B)\co (N\ot \rho_{B}).
\end{equation}

On the other hand, by (\ref{strong2}), we have 
$$n_{N}\co (\psi_{N}\ot \mu_{B})=n_{N}\co (N\ot (\mu_{B}\co (i_{B}\ot \mu_{B})))=n_{N}\co (N\ot (\mu_{B}\co ((\mu_{B}\co (i_{B}\ot B))\ot B))),$$
and then,using that the functor $-\ot B$ preserves coequalizers, there exists a unique morphism 
$$\phi_{N\ot_{B^{coH}} B}:N\ot_{B^{coH}} B\ot B\rightarrow N\ot_{B^{coH}} B$$ such that 
\begin{equation}
\label{quasi-modN}
\phi_{N\ot_{B^{coH}} B}\co (n_{N}\ot B)=n_{N}\co (N\ot \mu_{B}).
\end{equation}

By a similar proof to the one used for $M^{coH}\ot_{B^{coH}} B$ in Proposition \ref{tensor},  we can prove that $$(N\ot_{B^{coH}} B, \phi_{N\ot_{B^{coH}} B}, \rho_{N\ot_{B^{coH}} B})$$ is a strong $(H,B,h)$-Hopf module such that (the proof follows the ideas given in Proposition \ref{inv-defor} for $M^{coH}\ot_{B^{coH}} B$)
\begin{equation}
\label{w-d2}
\phi_{N\ot_{B^{coH}} B}^{\omega_{N\ot_{B^{coH}} B}}=\phi_{N\ot_{B^{coH}} B}.
\end{equation}

On the other hand, if $f:N\rightarrow P$ is a morphism in ${\mathcal C}_{B^{coH}}$, we have 
$$n_{P}\co (f\ot B)\co (\psi_{N}\ot B)=n_{P}\co (f\ot B)\co (N\ot (\mu_{B}\co (i_{B}\ot B)))$$
and then there exists a unique morphism $f\ot_{B^{coH}}B:N\ot_{B^{coH}} B\rightarrow P\ot_{B^{coH}} B$ such that 
\begin{equation}
\label{mor-induc}
n_{P}\co (f\ot B)=(f\ot_{B^{coH}}B)\co n_{N}.
\end{equation}

The morphism $f\ot_{B^{coH}}B$ is a morphism in ${\mathcal S}{\mathcal M}^{H}_{B}(h)$ because 
$$\rho_{P\ot_{B^{coH}} B}\co f\ot_{B^{coH}} B \co n_{N}\stackrel{{\scriptsize \blue  (\ref{comodN}), (\ref{mor-induc})}}{=}(n_{P}\ot B)\co (f\ot \rho_{B})\stackrel{{\scriptsize \blue  (\ref{comodN})}}{=}\rho_{P\ot_{B^{coH}} B}\co n_{P}\co (f\ot B)$$
$$\stackrel{{\scriptsize \blue  (\ref{mor-induc})}}{=}(f\ot_{B^{coH}} B\ot B)\co 
\rho_{N\ot_{B^{coH}} B}\co n_{N}$$
and 
$$  
\phi_{P\ot_{B^{coH}} B}^{\omega_{P\ot_{B^{coH}} B}}\co (f\ot_{B^{coH}} B\ot B)\co (n_{N}\ot B)
\stackrel{{\scriptsize \blue  (\ref{w-d2})}}{=}
\phi_{P\ot_{B^{coH}} B}\co (f\ot_{B^{coH}} B\ot B)\co (n_{N}\ot B)
\stackrel{{\scriptsize \blue  (\ref{quasi-modN}), (\ref{mor-induc})}}{=}
n_{P}\co (f\ot \mu_{B})$$
$$
\stackrel{{\scriptsize \blue  (\ref{mor-induc})}}{=}
f\ot_{B^{coH}} B\co n_{N}\co (N\ot \mu_{B})
\stackrel{{\scriptsize \blue  (\ref{quasi-modN})}}{=}
f\ot_{B^{coH}} B\co\phi_{N\ot_{B^{coH}} B}\co (n_{N}\ot B)
\stackrel{{\scriptsize \blue  (\ref{w-d2})}}{=}
f\ot_{B^{coH}} B\co\phi_{N\ot_{B^{coH}} B}^{\omega_{N\ot_{B^{coH}} B}}\co (n_{N}\ot B).$$

Summarizing, we have the following proposition:

\begin{proposition}
\label{I-functor}
There exists a functor $F:{\mathcal C}_{B^{coH}}\rightarrow  {\mathcal S}{\mathcal M}^{H}_{B}(h)$, called the induction functor, defined on objects by $F((N,\psi_{N}))=(N\ot_{B^{coH}} B, \phi_{N\ot_{B^{coH}} B}, \rho_{N\ot_{B^{coH}} B})$ and on morphisms by $F(f)=f\ot_{B^{coH}}B$.
\end{proposition}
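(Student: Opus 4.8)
The plan is to observe that nearly all the content of this statement has already been verified in the discussion preceding it. For an object $(N,\psi_{N})$ of ${\mathcal C}_{B^{coH}}$ the triple $(N\ot_{B^{coH}} B, \phi_{N\ot_{B^{coH}} B}, \rho_{N\ot_{B^{coH}} B})$ is a strong $(H,B,h)$-Hopf module by the same computations carried out for $M^{coH}\ot_{B^{coH}} B$ in Proposition \ref{tensor}, and for a morphism $f$ of right $B^{coH}$-modules the induced morphism $f\ot_{B^{coH}}B$, characterized by (\ref{mor-induc}), was shown to be colinear and $B$-quasilinear, hence a morphism in ${\mathcal S}{\mathcal M}^{H}_{B}(h)$. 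Thus $F$ is well defined on objects and on morphisms, and the only thing left to prove is that it respects identities and composition.

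The essential tool for both functoriality axioms is that $n_N$ is a coequalizer, hence an epimorphism: to compare two morphisms with source $N\ot_{B^{coH}} B$ it suffices to precompose them with $n_N$. For the identity, I would note that $id_{N\ot_{B^{coH}} B}$ satisfies $id_{N\ot_{B^{coH}} B}\co n_N=n_N=n_N\co (id_N\ot B)$, which is exactly the equation (\ref{mor-induc}) that characterizes $id_N\ot_{B^{coH}}B$; by the uniqueness of the factorization through $n_N$ we conclude $F(id_N)=id_N\ot_{B^{coH}} B=id_{N\ot_{B^{coH}} B}$.

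For composition, given $f:N\rightarrow P$ and $g:P\rightarrow Q$ in ${\mathcal C}_{B^{coH}}$, I would compute
$$((g\ot_{B^{coH}} B)\co (f\ot_{B^{coH}} B))\co n_N=(g\ot_{B^{coH}} B)\co n_P\co (f\ot B)=n_Q\co (g\ot B)\co (f\ot B)=n_Q\co ((g\co f)\ot B),$$
using (\ref{mor-induc}) twice. Since $n_Q\co ((g\co f)\ot B)$ is precisely the defining equation for $(g\co f)\ot_{B^{coH}} B$ precomposed with $n_N$, the uniqueness clause of (\ref{mor-induc}) yields $F(g\co f)=(g\co f)\ot_{B^{coH}} B=(g\ot_{B^{coH}} B)\co (f\ot_{B^{coH}} B)=F(g)\co F(f)$. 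I do not expect any genuine obstacle here: the argument is entirely formal, the one point requiring care being to keep track of which coequalizer ($n_N$, $n_P$, or $n_Q$) each factorization is taken through, so that the uniqueness statements are applied to the correct universal arrows.
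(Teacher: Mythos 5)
Your proposal is correct and follows essentially the same route as the paper: the paper's ``proof'' of Proposition \ref{I-functor} is precisely the preceding discussion (the construction of $\phi_{N\ot_{B^{coH}} B}$ and $\rho_{N\ot_{B^{coH}} B}$ via the coequalizer $n_N$, the verification of the strong Hopf module axioms by the argument of Proposition \ref{tensor}, and the colinearity and $B$-quasilinearity of $f\ot_{B^{coH}}B$ via (\ref{mor-induc}) and (\ref{w-d2})), which the proposition merely summarizes. Your explicit check of the functoriality axioms via uniqueness of factorizations through $n_N$, $n_P$, $n_Q$ is the standard formal argument the paper leaves implicit, and it is carried out correctly.
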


Now let $(M,\phi_{M}, \rho_{M})$ be an object in $ {\mathcal S}{\mathcal M}^{H}_{B}(h)$. By Proposition 
\ref{M-coinv} we have that  the object of coinvariants $M^{coH}$ is a right $B^{coH}$-module where 
$\phi_{M^{coH} }=p_{M}\co \phi_{M}\co (i_{M}\ot i_{B})$ (see (\ref{PhiBcoH1})). Let $g:M\rightarrow Q$ be a morphism in  $ {\mathcal S}{\mathcal M}^{H}_{B}(h)$. Using the comodule morphism condition we obtain that $\rho_{Q}\circ g\circ i_{M}=(Q\ot \overline{\Pi}_{H}^{R})\co \rho_{Q}\circ g\circ i_{M}$ and this implies that there exists a unique morphism $g^{coH}:M^{coH}\rightarrow Q^{coH}$ such that 
\begin{equation}
\label{coi-morph}
i_{Q}\co g^{coH}=g\co i_{M}.
\end{equation}
Also, using that $g$ is $B$-quasilinear, $H$ colinear, and (\ref{q-w}) we have 
\begin{equation}
\label{Bq-Hc}
g\co q_{M}^{\omega_{M}}=g\co q_{M}=q_{T}\co g=q_{T}^{\alpha_{T}}\co g.
\end{equation}

Then, 
$$i_{Q}\co g^{coH}\co p_{M}
\stackrel{{\scriptsize \blue  (\ref{coi-morph})}}{=}
g\co q_{M}
\stackrel{{\scriptsize \blue  (\ref{Bq-Hc})}}{=}
q_{Q}\co g$$
and, as a consequence,
\begin{equation}
\label{coinv-m-1}
g^{coH}\co p_{M}=p_{Q}\co g
\end{equation}
holds. 

The morphism $g^{coH}$ is a morphism in ${\mathcal C}_{B^{coH}}$ because

\begin{itemize}
\item[ ]$\hspace{0.38cm} \phi_{Q^{coH}}\co (g^{coH}\ot B^{coH}) $

\item[ ]$= p_{Q}\co \phi_{Q}\co ((i_{Q}\co  g^{coH})\ot i_{B}) $ {\scriptsize  ({\blue (\ref{PhiBcoH1})})}

\item[ ]$=p_{Q}\co \phi_{Q}^{\omega_{Q}}\co ((i_{Q}\co  g^{coH})\ot i_{B})    $ {\scriptsize  ({\blue (\ref{iphi})})}

\item[ ]$=  p_{Q}\co \phi_{Q}^{\omega_{Q}}\co ((g\co  i_{M})\ot i_{B})   $ {\scriptsize  ({\blue (\ref{coi-morph})})}

\item[ ]$=  p_{Q}\co g\co \phi_{M}^{\omega_{M}}\co ( i_{M}\ot i_{B})  $ {\scriptsize  ({\blue (\ref{quasilineal})})} 

\item[ ]$= p_{Q}\co g\co \phi_{M}\co ( i_{M}\ot i_{B})  $ {\scriptsize  ({\blue (\ref{iphi})})}

\item[ ]$=g^{coH}\co \phi_{M^{coH}}$ {\scriptsize  ({\blue (\ref{coinv-m-1})})}. 

\end{itemize}

Thus, we have the following result.

\begin{proposition}
\label{c-functor}
There exists a functor $G:{\mathcal S}{\mathcal M}^{H}_{B}(h)\rightarrow  {\mathcal C}_{B^{coH}},$ called the  functor of coinvariants, defined on objects by $G((M,\phi_{M},\rho_{M}))=(M^{coH}, \psi_{M^{coH}})$ and on morphisms by $G(g)=g^{coH}$.
\end{proposition}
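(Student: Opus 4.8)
The plan is to assemble the facts already established in the discussion preceding the statement and then to supply the one ingredient not yet verified there, namely functoriality of $G$.

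First I would note that well-definedness on objects is immediate from Proposition \ref{M-coinv}: for every object $(M,\phi_M,\rho_M)$ of ${\mathcal S}{\mathcal M}^{H}_{B}(h)$ the pair $(M^{coH},\phi_{M^{coH}})$, with $\phi_{M^{coH}}=p_M\co\phi_M\co(i_M\ot i_B)$, is a right $B^{coH}$-module, so $G$ does land in ${\mathcal C}_{B^{coH}}$ on objects. For morphisms the construction is already carried out above: given $g:M\rightarrow Q$ in ${\mathcal S}{\mathcal M}^{H}_{B}(h)$, its $H$-colinearity together with the equalizer description of $i_Q$ coming from Proposition \ref{idem-M} produces the unique factorization $g^{coH}$ determined by (\ref{coi-morph}), and the displayed computation immediately before the statement shows that $g^{coH}$ is $B^{coH}$-linear, i.e. $\phi_{Q^{coH}}\co(g^{coH}\ot B^{coH})=g^{coH}\co\phi_{M^{coH}}$. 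Hence $G(g)=g^{coH}$ is a well-defined morphism in ${\mathcal C}_{B^{coH}}$.

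The only genuinely new step, and the point I expect to require a separate argument, is checking that $G$ respects identities and composition. The key observation I would exploit is that each $i_M$ is a split monomorphism, since $p_M\co i_M=id_{M^{coH}}$, so it can be cancelled on the left. For the identity, both $(id_M)^{coH}$ and $id_{M^{coH}}$ satisfy $i_M\co(-)=i_M$ by (\ref{coi-morph}), whence they agree. For a composite with $g:M\rightarrow Q$ and $g':Q\rightarrow R$, I would compute, using (\ref{coi-morph}) twice, $i_R\co g'^{coH}\co g^{coH}=g'\co i_Q\co g^{coH}=g'\co g\co i_M=i_R\co(g'\co g)^{coH}$, and then cancel the monomorphism $i_R$ to obtain $(g'\co g)^{coH}=g'^{coH}\co g^{coH}$. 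This completes the verification that $G$ is a functor. No serious obstacle arises here, precisely because the substantive colinearity and module-morphism computations have all been discharged in the paragraphs leading up to the statement; the remaining work is the purely formal cancellation argument above.
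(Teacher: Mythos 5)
Your proposal is correct and follows essentially the same route as the paper: the paper's own proof of Proposition \ref{c-functor} consists precisely of the discussion preceding it (Proposition \ref{M-coinv} for well-definedness on objects, the factorization (\ref{coi-morph}) through the equalizer $i_{Q}$ for morphisms, and the displayed computation showing $\phi_{Q^{coH}}\co (g^{coH}\ot B^{coH})=g^{coH}\co \phi_{M^{coH}}$), which you invoke in the same way. The only difference is that you spell out the routine check that $G$ preserves identities and composition by cancelling the split monomorphisms $i_{M}$ and $i_{R}$; the paper leaves this step implicit, and your argument for it is correct.
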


\begin{theorem}
\label{p-paper}
The categories ${\mathcal SM}^{H}_{B}(h)$ and ${\mathcal C}_{B^{coH}}$ are equivalent.
\end{theorem}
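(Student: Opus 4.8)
The plan is to show that the induction functor $F$ of Proposition \ref{I-functor} and the functor of coinvariants $G$ of Proposition \ref{c-functor} are mutually quasi-inverse, by exhibiting natural isomorphisms $FG\cong \mathrm{Id}_{{\mathcal S}{\mathcal M}^{H}_{B}(h)}$ and $GF\cong \mathrm{Id}_{{\mathcal C}_{B^{coH}}}$. This is exactly the data of an equivalence, so the adjunction $F\dashv G$ mentioned in the introduction is not strictly needed here.

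For the first isomorphism I would use $\omega_{M}$ from Theorem \ref{main0}. By construction $FG((M,\phi_{M},\rho_{M}))=(M^{coH}\ot_{B^{coH}}B,\dots)$, and $\omega_{M}:M^{coH}\ot_{B^{coH}}B\to M$ is already an isomorphism of right $H$-comodules; one checks it is $B$-quasilinear, hence an isomorphism in ${\mathcal S}{\mathcal M}^{H}_{B}(h)$. The remaining point is naturality: for $f:M\to M'$ one must verify $\omega_{M'}\co (f^{coH}\ot_{B^{coH}}B)=f\co\omega_{M}$. Precomposing with the coequalizer $n_{M^{coH}}$ and using (\ref{omeg}), the defining property (\ref{coi-morph}) of $f^{coH}$ and the identity (\ref{iphi}), both sides reduce to $\phi_{M'}^{\omega_{M'}}\co (f\ot B)\co (i_{M}\ot B)$, which agree precisely by the $B$-quasilinearity (\ref{quasilineal}) of $f$; since $n_{M^{coH}}$ is epi this yields naturality.

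For the second isomorphism, write $X_{N}=N\ot_{B^{coH}}B$. First I would establish the \emph{key identity} $q_{X_{N}}\co n_{N}=n_{N}\co (N\ot q_{B})$, which follows from (\ref{comodN}), (\ref{quasi-modN}) and the definition of $q_{B}$. Hence $t_{N}:=n_{N}\co (N\ot\eta_{B})$ satisfies $q_{X_{N}}\co t_{N}=t_{N}$ (because $q_{B}\co\eta_{B}=\eta_{B}$), so it factors as $t_{N}=i_{X_{N}}\co\bar t_{N}$ for a unique $\bar t_{N}:N\to X_{N}^{coH}=GF(N)$. I would build its inverse from $\psi_{N}\co (N\ot p_{B}):N\ot B\to N$, which coequalizes $\psi_{N}\ot B$ and $N\ot(\mu_{B}\co(i_{B}\ot B))$ — the crucial input being (\ref{l-idem3}) applied to the strong Hopf module $(B,\mu_{B},\rho_{B})$ together with (\ref{muH1}) — giving a unique $s_{N}:X_{N}\to N$ with $s_{N}\co n_{N}=\psi_{N}\co (N\ot p_{B})$. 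A direct computation then shows $s_{N}\co i_{X_{N}}\co\bar t_{N}=\mathrm{id}_{N}$ (using (\ref{etaH1})) and $t_{N}\co s_{N}=q_{X_{N}}$, the latter forcing $\bar t_{N}\co (s_{N}\co i_{X_{N}})=\mathrm{id}_{X_{N}^{coH}}$; thus $\bar t_{N}$ is an isomorphism. Its $B^{coH}$-linearity and naturality are obtained by precomposing with $n_{N}$ (reducing both sides to $p_{X_{N}}\co n_{N}\co (N\ot i_{B})$) and, for naturality, by using that $i_{X_{P}}$ is mono together with (\ref{mor-induc}).

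The main obstacle I expect is this second direction, namely identifying the coinvariants $X_{N}^{coH}$ of the induced (``free'') module with $N$. The computation $q_{X_{N}}\co n_{N}=n_{N}\co (N\ot q_{B})$ is the heart of the argument, and the verification that $\psi_{N}\co (N\ot p_{B})$ descends along $n_{N}$ depends essentially on the associativity-type condition (\ref{l-idem3}) for $B$ — which is precisely where the hypotheses (\ref{strong-1})--(\ref{strong2}) and the anchor axioms are really used. Once both natural isomorphisms are in hand, $F$ and $G$ constitute an equivalence, proving ${\mathcal S}{\mathcal M}^{H}_{B}(h)\simeq {\mathcal C}_{B^{coH}}$.
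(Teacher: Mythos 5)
Your proposal is correct and takes essentially the same route as the paper: your $\bar t_{N}$ with inverse $s_{N}\co i_{X_{N}}$ is exactly the paper's unit $\alpha_{N}$ (defined by $i_{N\ot_{B^{coH}}B}\co\alpha_{N}=n_{N}\co(N\ot\eta_{B})$) with inverse $x_{N}=m_{N}\co i_{N\ot_{B^{coH}}B}$ built from the same descended morphism $\psi_{N}\co(N\ot p_{B})$, and your first isomorphism together with its naturality verification is exactly the paper's counit $\beta_{M}=\omega_{M}$ from Theorem \ref{main0}. The only deviation is that you omit the paper's Step 3 (the triangular identities establishing $F\dashv G$), which is legitimate, since exhibiting natural isomorphisms $FG\cong \mathrm{Id}$ and $GF\cong \mathrm{Id}$ already suffices for an equivalence of categories.
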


\begin{proof} To prove the theorem,   we  firstly obtain that the induction functor $F$, introduced in Proposition \ref{I-functor}, is left adjoint to the functor of coinvariants $G$ introduced in Proposition \ref{c-functor}. Later, we  show that  the unit and counit  associated to this adjunction are natural isomorphisms. Then, we  proceed as in the proof of Theorem 3.10 of \cite{Strong} by dividing the proof in three steps. 

{\it \underline{Step 1:}} In this step we  define the unit of the adjunction. For any right $B^{coH}$-module  $(N, \psi_{N})$, consider 
$$\alpha_{N}:N\rightarrow GF(N)=(N\ot_{B^{coH}}B)^{coH}$$
as the unique morphism such that 
\begin{equation}
\label{unit}
i_{N\ot_{B^{coH}}B}\co \alpha_{N}=n_{N}\co (N\ot \eta_{B}). 
\end{equation}

This morphism exists and is unique because 
$$((N\ot_{B^{coH}}B)\ot \overline{\Pi}_{H}^{R})\co \rho_{N\ot_{B^{coH}}B}\co n_{N}\co (N\ot \eta_{B})
\stackrel{{\scriptsize \blue  (\ref{comodN})}}{=}
(n_{N}\ot H)\co (N\ot ((B\ot \overline{\Pi}_{H}^{R})\co\rho_{B}\co \eta_{B}))
$$
$$
\stackrel{{\scriptsize \blue  ({\rm b5})}}{=}
(n_{N}\ot H)\co (N\ot (\rho_{B}\co \eta_{B}))
\stackrel{{\scriptsize \blue  (\ref{comodN})}}{=}
\rho_{N\ot_{B^{coH}}B}\co n_{N}\co (N\ot \eta_{B}).$$

Also, $\alpha_{N}$ is a morphism in  ${\mathcal C}_{B^{coH}}$. Indeed: Composing with the equalizer $i_{N\ot_{B^{coH}}B}$ we have 

\begin{itemize}
\item[ ]$\hspace{0.38cm} i_{N\ot_{B^{coH}}B} \co \psi_{(N\ot_{B^{coH}}B)^{coH}}\co (\alpha_{N}\ot B^{coH}) $
\item[ ]$= q_{N\ot_{B^{coH}}B} \co \phi_{N\ot_{B^{coH}}B}\co ((i_{N\ot_{B^{coH}}B}\co \alpha_{N})\ot i_{B})$ {\scriptsize ({\blue (\ref{PhiBcoH1})})}
\item[ ]$= q_{N\ot_{B^{coH}}B} \co \phi_{N\ot_{B^{coH}}B}\co ((n_{N}\co (N\ot \eta_{B}))\ot i_{B})$ {\scriptsize ({\blue (\ref{unit})})}
\item[ ]$= q_{N\ot_{B^{coH}}B} \co n_{N}\co (N\ot (\mu_{B}\co (\eta_{B}\ot i_{B})))$ {\scriptsize ({\blue (\ref{quasi-modN})})}
\item[ ]$= n_{N}\co (N\ot (q_{B}\co  i_{B}))$ {\scriptsize ({\blue properties of $\eta_{B}$, (\ref{comodN}), and (\ref{quasi-modN})})}
\item[ ]$=n_{N}\co (N\ot  i_{B}) $ {\scriptsize ({\blue properties of $i_{B}$})}
\item[ ]$=n_{N}\co (N\ot (\mu_{B}\co (i_{B}\ot \eta_{B}))) $ {\scriptsize ({\blue properties of $\eta_{B}$})}
\item[ ]$=n_{N}\co (\psi_{N}\ot \eta_{B}) $ {\scriptsize ({\blue (\ref{coeqN-1})})}
\item[ ]$=i_{N\ot_{B^{coH}}B} \co \alpha_{N}\co \psi_{N} $ {\scriptsize ({\blue (\ref{unit})})}
\end{itemize}
and therefore $\psi_{N\ot_{B^{coH}}B}\co (\alpha_{N}\ot B^{coH})=\alpha_{N}\co \psi_{N}.$ On the other hand, the morphism $\alpha_{N}$ is natural in $N$ because if $f:N\rightarrow P$ is a morphism in ${\mathcal C}_{B^{coH}}$  
$$ i_{P\ot_{B^{coH}}B}\co (f\ot_{B^{coH}}B)^{coH}\co \alpha_{N}
\stackrel{{\scriptsize \blue  (\ref{coi-morph})}}{=}
(f\ot_{B^{coH}}B)\co i_{N\ot_{B^{coH}}B}\co \alpha_{N}
\stackrel{{\scriptsize \blue  (\ref{unit})}}{=}
(f\ot_{B^{coH}}B)\co n_{N}\co (N\ot \eta_{B})
$$
$$
\stackrel{{\scriptsize \blue  (\ref{mor-induc})}}{=}
n_{P}\co (f\ot \eta_{B})
\stackrel{{\scriptsize \blue  (\ref{unit})}}{=}
i_{P\ot_{B^{coH}}B}\co \alpha_{P}\co f,$$
and then $(f\ot_{B^{coH}}B)^{coH}\co \alpha_{N}=\alpha_{P}\co f$.

Finally, we  prove that $\alpha_{N}$ is an isomorphism for all right $B^{coH}$-module $N$. First note that, under the conditions of this theorem, the triple $(B, \mu_{B}, \delta_{B})$ is a  strong $(H,B, h)$-Hopf module, and then 
$$\psi_{N}\co (\psi_{N}\ot p_{B})=\psi_{N}\co
 (N\ot (p_{B}\co \mu_{B}\co (i_{B}\ot B)))$$
because 
\begin{itemize}
\item[ ]$\hspace{0.38cm}  \psi_{N}\co (\psi_{N}\ot p_{B})$
\item[ ]$=\psi_{N}\co (N\ot (\mu_{B^{coH}}\co (B^{coH}\ot p_{B})))  $ {\scriptsize ({\blue module condition for $N$})}
\item[ ]$=\psi_{N}\co (N\ot (p_{B}\co \mu_{B}\co (i_{B}\ot q_{B}))  $ {\scriptsize ({\blue (\ref{muH1})})}
\item[ ]$= \psi_{N}\co
 (N\ot (p_{B}\co \mu_{B}\co (i_{B}\ot B))) $ {\scriptsize ({\blue (\ref{l-idem3})})}.
\end{itemize}

Therefore, there exists a unique morphism $m_{N}:N\ot_{B^{coH}}B\rightarrow N$ such that 
\begin{equation}
\label{mn}
m_{N}\co n_{N}=\psi_{N}\co (N\ot p_{B}). 
\end{equation}

Now define the morphism $x_{N}:(N\ot_{B^{coH}}B)^{coH}\rightarrow N$ by $x_{N}=m_{N}\co i_{N\ot_{B^{coH}}B}$. Then, 
$$x_{N}\co \alpha_{N}
\stackrel{{\scriptsize \blue  (\ref{unit})}}{=}
m_{N}\co n_{N}\co (N\ot \eta_{B})
\stackrel{{\scriptsize \blue  (\ref{mn})}}{=}
\psi_{N}\co (N\ot (p_{B}\co \eta_{B}))
\stackrel{{\scriptsize \blue  (\ref{etaH})}}{=}
\psi_{N}\co (N\ot  \eta_{B^{coH}})=
id_{N}.
$$

On the other hand, composing with 
$i_{N\ot_{B^{coH}}B}$ and $p_{N\ot_{B^{coH}}B}\co n_{N}$  we have 

\begin{itemize}
\item[ ]$\hspace{0.38cm}  i_{N\ot_{B^{coH}}B} \co  \alpha_{N}\co x_{N}\co p_{N\ot_{B^{coH}}N}\co n_{N}$
\item[ ]$= n_{N}\co ((m_{N}\co q_{N\ot_{B^{coH}}B}\co n_{N})\ot \eta_{B}) $ {\scriptsize ({\blue (\ref{unit})})}
\item[ ]$= n_{N}\co ((\psi_{N}\co (N\ot (p_{B}\co q_{B})))\ot \eta_{B}) $ {\scriptsize ({\blue (\ref{comodN}), (\ref{quasi-modN}), (\ref{mn})})}
\item[ ]$=n_{N}\co ((\psi_{N}\co (N\ot p_{B}))\ot \eta_{B})  $ {\scriptsize ({\blue properties of $q_{B}$})}
\item[ ]$= n_{N}\co (N\ot (\mu_{B}\co (q_{B}\ot \eta_{B}))) $ {\scriptsize ({\blue (\ref{coeqN-1})})}
\item[ ]$= n_{N}\co (N\ot q_{B}) $ {\scriptsize ({\blue properties of $\eta_{B}$})}
\item[ ]$= q_{N\ot_{B^{coH}}B} \co n_{N} $ {\scriptsize ({\blue (\ref{comodN}), (\ref{quasi-modN})})}.
\end{itemize}

Therefore, $\alpha_{N}\co x_{N}=id_{(N\ot_{B^{coH}}B)^{coH}}$ and, as a consequence, $\alpha_{N}$ is an isomorphism.

{\it \underline{Step 2:}} For any $(M,\phi_{M},\rho_{M})\in {\mathcal SM}^{H}_{B}(h)$ the counit is defined as 
$ \beta_{M}=\omega_{M}:M^{coH}\ot_{B^{coH}}B\rightarrow M,$
where $\omega_{M}$ is the isomorphism satisfying (\ref{omeg}). By Theorem \ref{main0}, we know that $\omega_{M}$ is an isomorphism in ${\mathcal SM}^{H}_{B}(h)$, and it is  natural because if $g:M\rightarrow Q$ is a morphism in ${\mathcal SM}^{H}_{B}(h)$ we have 
$$\beta_{Q}\co (g^{coH}\ot_{B^{coH}}	\ot B)\co n_{M^{coH}}
\stackrel{{\scriptsize \blue  (\ref{mor-induc})}}{=}
\beta_{Q}\co n_{Q^{coH}}\co (g^{coH}\ot B)
\stackrel{{\scriptsize \blue  (\ref{omeg})}}{=}
\phi_{Q}\co ((i_{Q}\co g^{coH})\ot B) 
\stackrel{{\scriptsize \blue  (\ref{iphi})}}{=}
\phi_{Q}^{\omega_{Q}}\co ((i_{Q}\co g^{coH})\ot B)$$
$$\stackrel{{\scriptsize \blue  (\ref{coi-morph})}}{=}
\phi_{Q}^{\omega_{Q}}\co ((g\co i_{Q})\ot B)
\stackrel{{\scriptsize \blue  (\ref{quasilineal})}}{=}
g\co \phi_{M}^{\omega_{M}}\co (i_{M}\ot B)
\stackrel{{\scriptsize \blue  (\ref{iphi})}}{=}
g\co \phi_{M}\co (i_{M}\ot B)
\stackrel{{\scriptsize \blue  (\ref{omeg})}}{=}
g\co \beta_{M}\co n_{M^{coH}}
$$
and thus $\beta_{Q}\co g^{coH}\ot_{B^{coH}}\ot B=g\co \beta_{M}.$

{\it \underline{Step 3:}} Now we prove  the triangular identities for the unit and  counit  previously defined. Indeed:  The first triangular identity holds because composing with $n_{N}$ we have 
\begin{itemize}
\item[ ]$\hspace{0.38cm} \beta_{N\ot_{B^{coH}}B}\co (\alpha_{N}\ot_{B^{coH}}B)\co n_{N} $
\item[ ]$=\beta_{N\ot_{B^{coH}}B}\co n_{ (N\ot_{B^{coH}}B)^{coH}}\co (\alpha_{N}\ot B) $ {\scriptsize ({\blue by (\ref{mor-induc})})}
\item[ ]$= \phi_{N\ot_{B^{coH}}B}\co ((i_{N\ot_{B^{coH}}B}\co \alpha_{N})\ot B)$ {\scriptsize ({\blue by  (\ref{omeg})})}
\item[ ]$= \phi_{N\ot_{B^{coH}}B}\co ((n_{N}\co (N\ot \eta_{B}))\ot B)$ {\scriptsize ({\blue by (\ref{unit})})}
\item[ ]$= n_{N}\co (N\ot (\mu_{B}\co (\eta_{B}\ot B))) $ {\scriptsize ({\blue by (\ref{quasi-modN})})}
\item[ ]$=n_{N} $ {\scriptsize ({\blue by the unit properties})}.
\end{itemize}

Finally, if we compose with $i_{M}$,
$$i_{M}\co \beta_{M}^{coH}\co \alpha_{M^{coH}} 
\stackrel{{\scriptsize \blue  (\ref{coi-morph})}}{=}
\beta_{M}\co  i_{M^{coH}\ot_{B^{coH}}B}\co \alpha_{M^{coH}}
\stackrel{{\scriptsize \blue  (\ref{unit})}}{=}
\beta_{M}\co n_{M^{coH}}\co ( M^{coH}\ot \eta_{B})
\stackrel{{\scriptsize \blue  (\ref{omeg})}}{=}
\phi_{M}\co (i_{M}\ot \eta_{B})  
\stackrel{{\scriptsize \blue  (d2-1)}}{=}
i_{M}, $$
and then $\beta_{M}^{coH}\co \alpha_{M^{coH}} =id_{M^{coH}}.$

\end{proof}

As a consequence, we obtain the following particular instances of our main theorem.

\begin{corollary}
 The following assertions hold:
\begin{itemize}
\item[(i)] Let $H$ be a Hopf algebra and let $(B,\rho_{B})$ be a right $H$-comodule monoid such the functors $-\ot B$ and $-\ot H$ preserve coequalizers. Then, if there exists a multiplicative total integral $h:H\rightarrow B$, the categories of  right $(H,B)$-Hopf modules, denoted by ${\mathcal M}^{H}_{B}$ and introduced by Doi in \cite{Doi83}, the category ${\mathcal SM}_{B}^{H}(h)$ of strong $(H,B,h)$-Hopf modules, and the category of right $B^{coH}$-modules ${\mathcal C}_{B^{coH}}$ are equivalent. In particular, if $B=H$ and $\rho_{B}=\delta_{H}$, the Sweedler category of Hopf modules ${\mathcal M}^{H}_{H}$, the category ${\mathcal SM}_{H}^{H}(id_{H})$ of strong $(H,H,id_{H})$-Hopf modules, and the category ${\mathcal C}$ are equivalent. 
\item[(ii)] Let $H$ be a weak Hopf algebra and let $(B,\rho_{B})$ be a right $H$-comodule monoid such that the functors $-\ot B$ and $-\ot H$ preserve coequalizers. Then, if there exists a multiplicative total integral $h:H\rightarrow B$, the categories of  right $(H,B)$-Hopf modules, denoted by ${\mathcal M}^{H}_{B}$ and introduced by B\"{o}hm in \cite{bohm2} (see also \cite{ZZ} and \cite{Hanna} for the categorical equivalence), the category ${\mathcal SM}_{B}^{H}(h)$ of strong $(H,B,h)$-Hopf modules, and the category of right $B^{coH}$-modules ${\mathcal C}_{B^{coH}}$ are equivalent. In particular, if $B=H$ and $\rho_{B}=\delta_{H}$, the category of Hopf modules ${\mathcal M}^{H}_{H}$, the category ${\mathcal SM}_{H}^{H}(id_{H})$ of strong $(H,H,id_{H})$-Hopf modules, and the category ${\mathcal C}_{H_{L}}$ of right $H_{L}$-modules are equivalent. 
\item[(iii)] Let $H$ be a Hopf quasigroup and let $(B,\rho_{B})$ be a right $H$-comodule magma such the functors $-\ot B$ and $-\ot H$ preserve coequalizers. Then, if there exists an anchor morphism $h:H\rightarrow B$, the categories  ${\mathcal SM}_{B}^{H}(h)$ of strong $(H,B,h)$-Hopf modules, and the category of right $B^{coH}$-modules ${\mathcal C}_{B^{coH}}$ are equivalent. In particular, if $B=H$ and $\rho_{B}=\delta_{H}$, we obtain the result proved by Brzezi\'nski in \cite{Brz}: the category of Hopf modules ${\mathcal M}^{H}_{H}={\mathcal SM}_{H}^{H}(id_{H})$ and the category ${\mathcal C}$ are equivalent. 
\item[(iv)] Let $H$ be a weak Hopf quasigroup such the functor  $-\ot H$ preserves coequalizers. The category of strong Hopf modules ${\mathcal SM}^{H}_{H}={\mathcal SM}_{H}^{H}(id_{H})$, and the category ${\mathcal C}_{H_{L}}$ of right $H_{L}$-modules are equivalent (this is the main result  proved in \cite{Strong}). 
\end{itemize}

\end{corollary}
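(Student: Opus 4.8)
The plan is to obtain all four assertions by specializing the equivalence ${\mathcal SM}^{H}_{B}(h)\simeq{\mathcal C}_{B^{coH}}$ of Theorem \ref{p-paper}, after verifying its hypotheses and computing $B^{coH}$ in each case, and, for the associative settings, after identifying the classical category of Doi--Hopf modules ${\mathcal M}^{H}_{B}$ with ${\mathcal SM}^{H}_{B}(h)$. In (i) and (ii) the object $B$ is a comodule monoid, so $\mu_{B}$ is associative and (\ref{strong-1}), (\ref{strong2}) hold trivially; by the remark following Definition \ref{anchor}, associativity of $\mu_{B}$ also makes every multiplicative total integral $h$ an anchor morphism (alternatively one invokes (\ref{idemp-2}) and $\Pi_{H}^{L}\ast id_{H}=id_{H}$). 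In (iii) and (iv) the anchor morphism and the conditions (\ref{strong-1}), (\ref{strong2}) are built into the standing hypotheses under which ${\mathcal SM}^{H}_{B}(h)$ is defined. Since coequalizer preservation is assumed throughout, Theorem \ref{p-paper} applies in every case and gives ${\mathcal SM}^{H}_{B}(h)\simeq{\mathcal C}_{B^{coH}}$. It remains to identify $B^{coH}$: for $B=H$, $\rho_{B}=\delta_{H}$ one has $B^{coH}=H^{coH}=H_{L}$ by the remark following Proposition \ref{idem-B}; in the ordinary Hopf and Hopf-quasigroup cases, where $\Pi_{H}^{L}=\eta_{H}\ot\varepsilon_{H}$, this collapses to $H^{coH}=K$ and ${\mathcal C}_{B^{coH}}={\mathcal C}$, whereas in the weak cases it is $H^{coH}=H_{L}$ and ${\mathcal C}_{B^{coH}}={\mathcal C}_{H_{L}}$. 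This already settles the general equivalence in (iii) (it is Theorem \ref{p-paper} verbatim) and, using the identifications recorded in the Examples, its $B=H$ specialization ${\mathcal M}^{H}_{H}={\mathcal SM}^{H}_{H}(id_{H})\simeq{\mathcal C}$ as well as assertion (iv), where ${\mathcal SM}^{H}_{H}(id_{H})$ is the strong Hopf module category ${\mathcal SM}^{H}_{H}$ of \cite{Strong}.

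The remaining content of (i) and (ii) is the equivalence ${\mathcal M}^{H}_{B}\simeq{\mathcal SM}^{H}_{B}(h)$. The Examples exhibit ${\mathcal M}^{H}_{B}$ as a subcategory of ${\mathcal SM}^{H}_{B}(h)$ via an inclusion $\iota$, and I would prove that $\iota$ is an equivalence by checking essential surjectivity, fullness and faithfulness. For essential surjectivity, given $(M,\phi_{M},\rho_{M})\in{\mathcal SM}^{H}_{B}(h)$ I pass to its $\omega$-deformation $D(M)=(M,\phi_{M}^{\omega_{M}},\rho_{M})$ from Proposition \ref{struc-tim-2}. Because $\mu_{B}$ is associative, the coequalizer-defined action on $M^{coH}\ot_{B^{coH}}B$ is a genuine (associative) right $B$-action: precomposing with the coequalizer $n_{M^{coH}}\ot B\ot B$ and applying (\ref{phi-ten}) twice reduces its associativity to $\mu_{B}\co(\mu_{B}\ot B)=\mu_{B}\co(B\ot\mu_{B})$. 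Transporting along the comodule isomorphism $\omega_{M}$ of Proposition \ref{tensor} then shows that $\phi_{M}^{\omega_{M}}$ satisfies the genuine module identity (\ref{LS-asocB}), so that $D(M)\in{\mathcal M}^{H}_{B}$. Finally, (\ref{phi-square}) says precisely that $id_{M}$ is $B$-quasilinear both as a map $(M,\phi_{M})\to D(M)$ and as its inverse, so $id_{M}$ is an isomorphism in ${\mathcal SM}^{H}_{B}(h)$ between $M$ and the genuine Doi--Hopf module $D(M)$; thus every object of ${\mathcal SM}^{H}_{B}(h)$ is isomorphic to one in $\iota({\mathcal M}^{H}_{B})$.

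For fullness, the key observation is that the deformation is trivial on genuine Doi--Hopf modules: for $M\in{\mathcal M}^{H}_{B}$, substituting the associativity (\ref{LS-asocB}) and the identity (\ref{l-idem4}) into the formula (\ref{action-w}) yields $\phi_{M}^{\omega_{M}}=\phi_{M}$. Hence for $M,N\in{\mathcal M}^{H}_{B}$ the $B$-quasilinearity condition (\ref{quasilineal}) reduces to the genuine linearity $\phi_{N}\co(f\ot B)=f\co\phi_{M}$, so the ${\mathcal SM}^{H}_{B}(h)$-morphisms between objects of ${\mathcal M}^{H}_{B}$ are exactly the ${\mathcal M}^{H}_{B}$-morphisms; faithfulness is automatic since all morphisms are underlying morphisms of ${\mathcal C}$. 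Therefore $\iota$ is an equivalence, giving ${\mathcal M}^{H}_{B}\simeq{\mathcal SM}^{H}_{B}(h)\simeq{\mathcal C}_{B^{coH}}$ and, on specializing $B=H$, $\rho_{B}=\delta_{H}$, $h=id_{H}$, the two particular statements in (i) and (ii).

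The step I expect to be most delicate is the essential-surjectivity argument, concretely the verification that $\phi_{M}^{\omega_{M}}$ is genuinely associative in the monoid case. This is where associativity of $\mu_{B}$ must be propagated through the coequalizer construction and then transported along $\omega_{M}$; every other ingredient is either a direct application of Theorem \ref{p-paper}, a computation of coinvariants already available in the text, or one of the identifications of the classical Hopf module categories carried out in the Examples.
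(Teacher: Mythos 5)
Your proposal is correct, and in parts (iii) and (iv) it coincides with what the paper does: those assertions are verbatim specializations of Theorem \ref{p-paper}, combined with the computation $H^{coH}=H_{L}$ (resp.\ $H^{coH}=K$ when $\Pi_{H}^{L}=\eta_{H}\ot\varepsilon_{H}$) and the identifications of ${\mathcal S}{\mathcal M}^{H}_{H}(id_{H})$ recorded in the Examples. Where you genuinely diverge is in (i) and (ii). The paper states the corollary as an immediate consequence: it takes the equivalence ${\mathcal M}^{H}_{B}\simeq{\mathcal C}_{B^{coH}}$ from the classical literature (Doi \cite{Doi83}, resp.\ B\"ohm \cite{bohm2} together with \cite{ZZ} and \cite{Hanna}, as the citations in the statement indicate) and composes it with Theorem \ref{p-paper}; the Examples only supply the subcategory inclusion ${\mathcal M}^{H}_{B}\subseteq{\mathcal S}{\mathcal M}^{H}_{B}(h)$, with no claim of fullness or essential surjectivity. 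You instead prove internally that this inclusion is an equivalence, and your two key computations are sound: for essential surjectivity, precomposing with the epimorphism $n_{M^{coH}}\ot B\ot B$ and applying (\ref{phi-ten}) twice does reduce associativity of $\phi_{M^{coH}\ot_{B^{coH}}B}$ to that of $\mu_{B}$, transport along $\omega_{M}$ gives (\ref{LS-asocB}) for $\phi_{M}^{\omega_{M}}$, and (\ref{phi-square}) together with the fact (Proposition \ref{struc-tim-2}) that the isomorphism associated to $D(M)$ is again $\omega_{M}$ makes $id_{M}$ an isomorphism $M\cong D(M)$ in ${\mathcal S}{\mathcal M}^{H}_{B}(h)$; for fullness, the chain $\phi_{M}^{\omega_{M}}=\phi_{M}\co((\phi_{M}\co(q_{M}\ot h)\co\rho_{M})\ot B)=\phi_{M}$, obtained from (\ref{action-w}), (\ref{LS-asocB}) and (\ref{l-idem4}), correctly collapses quasilinearity (\ref{quasilineal}) to genuine $B$-linearity on objects of ${\mathcal M}^{H}_{B}$. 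What your route buys: it is self-contained within the paper's machinery, re-deriving rather than citing the Doi and weak Doi--Hopf structure theorems, and it sharpens the Examples by showing that ${\mathcal M}^{H}_{B}$ is a full subcategory of ${\mathcal S}{\mathcal M}^{H}_{B}(h)$ meeting every isomorphism class. What the paper's route buys is brevity and an explicit bridge to the results it claims to generalize.
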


\begin{example}
{\rm 1.  Consider $H$  a Hopf quasigroup, $A$  a unital magma in ${\mathcal C}$, and t $\varphi_A:H\ot A\to A$ a morphism satisfying (\ref{et1}), (\ref{et2}). By the third points of Examples \ref{ex-hcm} and \ref{anch-ex} we know that  the smash product $A\sharp H$ is a right $H$-comodule magma with coaction $\rho_{A\sharp H}=A\ot \delta_{H}$ and $h=\eta_{A}\ot H:H\rightarrow A\sharp H$ is an anchor morphism. 

Moreover, if $A$ is a monoid and the equality 
\begin{equation}
\label{last-eqt}
\mu_{A}\co (\varphi_{A}\ot \varphi_{A})\co (H\ot c_{H,A}\ot A)\co (\delta_{H}\ot A\ot A)=\varphi_{A}\co (H\ot \mu_{A}), 
\end{equation}
holds, then so hold (\ref{strong-1})  and (\ref{strong2}). Indeed, first note that 

\begin{itemize}
\item[ ]$\hspace{0.38cm} q_{A\sharp H}$

\item[ ]$= A\ot (\mu_{H}\co (H\ot \lambda_{H})\co \delta_{H}$ {\scriptsize  ({\blue (\ref{nwx})})}

\item[ ]$= A\ot \eta_{H}\ot \varepsilon_{H}.$ {\scriptsize  ({\blue (\ref{rightHqg})})}.

\end{itemize}

Therefore $(A\sharp H)^{coH}=A$, $p_{A\sharp H}=A\ot \varepsilon_{H}$ and $i_{A\sharp H}=A\ot \eta_{H}$. As a consequence, we have

\begin{itemize}
\item[ ]$\hspace{0.38cm} \mu_{A\sharp H}\co ((\mu_{A\sharp H}\co (A	\ot H\ot i_{A\sharp H}))\ot A\ot H)$

\item[ ]$= (\mu_{A}\ot \mu_{H})\co (A\ot (((\mu_{A}\co (\varphi_{A}\ot \varphi_{A}))\ot H)\co (H\ot A\ot H\ot c_{H,A})\co (H\ot A\ot \delta_{H}\ot A)\co (H\ot c_{H,A}\ot A)$
\item[ ]$\hspace{0.38cm}\co (\delta_{H}\ot A\ot A)  )\ot H)$ {\scriptsize  ({\blue unit properties and associativity of $\mu_{A}$})}

\item[ ]$=(\mu_{A}\ot \mu_{H})\co   (A\ot (((\mu_{A}\co (\varphi_{A}\ot \varphi_{A})\co (H\ot c_{H,A}\ot A)\co (\delta_{H}\ot A\ot A))\ot H)\co (H\ot A\ot c_{H,A})$
\item[ ]$\hspace{0.38cm}\co (H\ot c_{H,A}\ot A)\co (\delta_{H}\ot A\ot A))\ot H)$ {\scriptsize  ({\blue coassociativity of $\delta_{H}$ and naturality of $c$ })}

\item[ ]$=(\mu_{A}\ot \mu_{H})\co   (A\ot (((\varphi_{A}\co (H\ot \mu_{A}))\ot H)\co (H\ot A\ot c_{H,A})\co (H\ot c_{H,A}\ot A)\co (\delta_{H}\ot A\ot A))\ot H)$ 
\item[ ]$\hspace{0.38cm}${\scriptsize  ({\blue (\ref{last-eqt})})}

\item[ ]$= \mu_{A\sharp H}\co (A\ot H\ot \mu_{A}\ot H)$ {\scriptsize  ({\blue naturality of $c$})}

\item[ ]$= \mu_{A\sharp H}\co (A\ot H\ot (\mu_{A\sharp H}\co (i_{A\sharp H}\ot A\ot H)))$ {\scriptsize  ({\blue (\ref{e-m-d-eps}), naturality of $c$, unit properties, and (\ref{et1})})},

\end{itemize}

and, on the other hand, 

\begin{itemize}
\item[ ]$\hspace{0.38cm} \mu_{A\sharp H}\co (i_{A\sharp H}\ot \mu_{A\sharp H})$

\item[ ]$= (\mu_{A}\ot H)\co (A\ot \mu_{A\sharp H})$ {\scriptsize  ({\blue (\ref{e-m-d-eps}), naturality of $c$, unit properties, and (\ref{et1})})}

\item[ ]$=\mu_{A\sharp H}\co  (\mu_{A}\ot  H\ot A\ot H)$ {\scriptsize  ({\blue associativity of $\mu_{A}$})}

\item[ ]$= \mu_{A\sharp H}\co ((\mu_{A\sharp H}\co (i_{A\sharp H}\ot A\ot H))\ot A\ot H)$ {\scriptsize  ({\blue (\ref{e-m-d-eps}), naturality of $c$, unit properties, and (\ref{et1})})}.

\end{itemize}

Therefore, if $-\ot A$ and $-\ot H$ preserve coequalizers, by (iii) of the previous Corollary, we have that the categories  ${\mathcal SM}_{A\sharp H}^{H}(h)$ and ${\mathcal C}_{A}$ are equivalent.

An interesting example of this case can be found using the theory developed in \cite{Majidesfera}.  Let ${\Bbb K}$ be a field and let ${\mathcal C}$ be the symmetric monoidal category of vector spaces over ${\Bbb K}$. Let $G$ be the abelian group ${\Bbb Z}_{2}^{n}$ and let $F:G\times G\rightarrow {\Bbb K}^{\ast}$ be a $2$-cochain, i.e., $F$ is a morphism such that $F(\theta, a)=F(a,\theta)=1$ for all $a\in G$ where $\theta$ is the group identity. The group algebra of $G$, denoted by ${\Bbb K}G$, is a ${\Bbb K}$-vector space with basis $\{e_{a}\;;\; a\in G\}$ and also  a unital magma with the  product (see \cite{AMajid}):
$$e_{a}e_{b}=F(a,b)e_{a+b}.$$

In what follows we will denote this magma by ${\Bbb K}_{F}G$. As was pointed in \cite{Majidesfera}, this algebraic object lives in the symmetric monoidal category of $G$-graded spaces with associator defined by the $3$-cocycle $\phi(a,b,c)=F(a,b)F(a+b,c)F(b,c)^{-1}F(a,b+c)^{-1}$  and symmetry defined by ${\mathcal R}(a,b)=F(a,b)F(b,a)^{-1}$. For example, the choice of $G={\Bbb Z}_{2}^{3}$ and certain $F$ gives the octonions. Moreover, ${\Bbb K}_{F}G$ is a composition algebra with respect to the Euclidean norm in basis $G$ if two suitable conditions hold (see (2.1) and (2.2) of \cite{Majidesfera}). This means that the norm $q(\displaystyle\sum_{a}u_{a}e_{a})=\sum_{a}u_{a}^2$ is multiplicative. Then 
$${\mathcal S}^{2^n-1}=\{\displaystyle\sum_{a}u_{a}e_{a})\;,\; \sum_{a}u_{a}^2=1_{\Bbb K}\}$$ 
is closed under the product in ${\Bbb K}_{F}G$. By Proposition 3.6 of \cite{Majidesfera} we know that ${\mathcal S}^{2^n-1}$ is an IP loop, that  becomes an usual sphere if we work over ${\Bbb R}$, and then its loop algebra, denoted by ${\Bbb K}{\mathcal S}^{2^n-1}$ is a cocommutative Hopf quasigroup (see Proposition 4.7 of \cite{Majidesfera}). Let $H$ be ${\Bbb K}{\mathcal S}^{2^n-1}$ and let $A$ be the group algebra of $G$. Then, $A$ is a monoid (it is a cocommutative Hopf algebra) and we have an action  $\varphi_{A}:H\otimes A\rightarrow A$, where $\ot =\otimes_{{\Bbb K}}$, defined by 
$$\varphi_{A}(e_{a}\otimes e_{b})=(-1)^{a.b}e_{b}.$$

It is easy to see that $\varphi_{A}$ satisfies (\ref{et1}), (\ref{et2}) and (\ref{last-eqt}) and, as a consequence of the general theory, we have a categorical equivalence between $\displaystyle{\mathcal SM}_{{\Bbb K}{\Bbb Z}_{2}^{n}\sharp {\Bbb K}{\mathcal S}^{2^n-1}}^{{\Bbb K}{\mathcal S}^{2^n-1}}(h)$ and $\displaystyle{\mathcal C}_{{\Bbb K}{\Bbb Z}_{2}^{n}}$ for $\displaystyle h=\eta_{{\Bbb K}{\Bbb Z}_{2}^{n}}\ot id_{{\Bbb K}{\mathcal S}^{2^n-1}}$.

2. By the second points of Examples \ref{ex-hcm} and \ref{anch-ex} we know that, if $H$ is a cocommutative weak Hopf quasigroup and ${\mathcal C}$ is symmetric, $(H^{op}, \rho_{H^{op}}=(H\ot \lambda_{H})\co \delta_{H})$ is an example of right $H$-comodule magma and $\lambda_{H}$ is an anchor morphism. Then, by Theorem 3.22 of \cite{Asian} and the cocommutativity of $H$, we have the equality: 
$$q_{H^{op}}=\Pi_{H}^{L}.$$
Therefore, $i_{H^{op}}=i_{L}$, $p_{H^{op}}=p_{L}$ and $(H^{op})^{coH}=H_{L}$. On the other hand, by the naturality of $c$ and (\ref{monoid-hl-2}) we obtain that 
$$\mu_{H^{op}}\co ((\mu_{H^{op}}\co (H\ot i_{L}))\ot H)=\mu_{H^{op}}\co (H\ot (\mu_{H^{op}}\co (i_{L}\ot H))),$$
and by the naturality of $c$ and (\ref{monoid-hl-3}), 
$$\mu_{H^{op}}\co (i_{L}\ot \mu_{H^{op}})=\mu_{H^{op}}\co ((\mu_{H^{op}}\co (i_{L}\ot H))\ot H).$$

Therefore, we have (\ref{strong-1}) and (\ref{strong2}). As a consequence, if the category ${\mathcal C}$ admits coequalizers and the functor $-\ot H$ preserves coequalizers, by Theorem \ref{p-paper} we obtain an equivalence between the categories ${\mathcal SM}^{H}_{H^{op}}(\lambda_{H})$ and ${\mathcal C}_{H_{L}}$. If $H$ is a Hopf quasigroup, we have a similar result that asserts the following: The categories ${\mathcal SM}^{H}_{H^{op}}(\lambda_{H})$ and ${\mathcal C}$ are equivalent.

}
\end{example}

\section*{Acknowledgements}
The authors were supported by Ministerio de Econom\'{\i}a y Competitividad (Spain), grant MTM2016-79661-P. AEI/FEDER, UE, support included (Homolog\'{\i}a, homotop\'{\i}a e invariantes categ\'oricos en grupos y \'algebras no asociativas).

\end{document}